\numberwithin{equation}{section}
\newtheorem{theorem}{Theorem}[section]
\newtheorem{lemma}{Lemma}[section]
\newtheorem{corollary}{Corollary}[section]
\newtheorem{proposition}{Proposition}[section]
\newtheorem{remark}{Remark}[section]
\begin{document}

\title[An integral functional driven by fractional Brownian motion]{An integral functional driven by fractional Brownian motion$^{*}$}

\footnote[0]{$^{*}$The Project-sponsored by NSFC (11571071, 11426036) and Innovation Program of Shanghai Municipal Education Commission(12ZZ063).

${}^\S$litan-yan@hotmail.com (Corresponding author)}

\author[X. Sun, L. Yan and X. Yu]{Xichao Sun${}^{1}$, Litan Yan${}^{2,\S}$ and Xianye Yu${}^{2}$}

\keywords{fractional Brownian motion, Malliavin calculus, local time, fractional It\^{o} formula and Cauchy's principal value.}

\subjclass[2000]{Primary 60G15, 60H05; Secondary 60H07}

\maketitle

\date{}

\begin{center}
{\footnotesize {\it  ${}^1$Department of Mathematics and Physics, Bengbu University\\
1866 Caoshan Rd., Bengbu 233030, P.R. China\\
${}^2$Department of Mathematics, College of Science, Donghua University\\
2999 North Renmin Rd., Songjiang, Shanghai 201620, P.R. China}}
\end{center}


\maketitle

\begin{abstract}
Let $B^H$ be a fractional Brownian motion with Hurst index
$0<H<1$ and the weighted local time ${\mathscr L}^H(\cdot,t)$. In this paper, we consider the integral functional
$$
{\mathcal C}^H_t(a):=\lim_{\varepsilon\downarrow 0}\int_0^t1_{\{|B^H_s-a|>\varepsilon\}}\frac1{B^H_s-a}ds^{2H}\equiv \frac1{\pi}{\mathscr H}{\mathscr L}^H(\cdot,t)(a)
$$
in $L^2(\Omega)$ with $ a\in {\mathbb R}, t\geq 0$ and ${\mathscr H}$ denoting the Hilbert transform. We show that
$$
{\mathcal C}^H_t(a)=2\left((B^H_t-a)\log|B^H_t-a|-B^H_t+a\log|a| -\int_0^t\log|B^H_s-a|\delta B^H_s\right)
$$
for all $a\in {\mathbb R}, t\geq 0$ which is the fractional version of Yamada's formula, where the integral is the Skorohod integral. Moreover, we introduce the following {\em occupation type formula}:
$$
\int_{\mathbb R}{\mathcal C}^H_t(a)g(a)da=2H\pi\int_0^t({\mathscr
H}g)(B^H_s)s^{2H-1}ds
$$
for all continuous functions $g$ with compact support.
\end{abstract}


\section{Introduction}
Given $H\in (0,1)$, a fractional Brownian motion (fBm) $B^H=\{B_t^H, 0\leq t\leq
T\}$ with Hurst index $H$ is a mean zero Gaussian process such that
$$
E\left[B_t^HB_s^H\right]=\frac{1}{2}\left[t^{2H}+s^{2H}-|t-s|^{2H}\right]
$$
for all $t,s\geq 0.$ For $H=1/2$, $B^H$ coincides with the
standard Brownian motion $B$. $B^H$ is neither a semimartingale nor
a Markov process unless $H=1/2$, so many of the powerful techniques
from stochastic analysis are not available when dealing with $B^H$.
As a Gaussian process, one can construct the stochastic calculus of
variations with respect to $B^{H}$. Some surveys and complete
literatures for fBm could be found in Biagini {\it et
al}~\cite{BHOZ}, Decreusefond-\"Ust\"unel~\cite{Dec}, Hu~\cite{Hu2},
Mishura~\cite{Mishura2}, Nourdin~\cite{Nourdin}, Nualart~\cite{Nua4} and the references therein.

Let now $F$ be an absolutely continuous function such that the Skorohod integral
$$
\int_0^tF'(B^H_s-a)\delta B^H_s
$$
is well-defined and the second derivative $F''=f$ exists in the sense of Schwartz's distribution. Then the process
\begin{equation}\label{sec1-eq1.1}
{\mathcal K}^H_t(a):=2\left(F(B^H_t-a)-F(-a)-\int_0^tF'(B^H_s-a)\delta B^H_s\right),
\end{equation}
exists for all $a\in {\mathbb R}$. Denote
\begin{equation}\label{sec1-eq1.2}
{\mathcal X}^H_t(a):=\int_0^tf(B^H_s-a)ds^{2H}
\end{equation}
for all $t\geq 0,a\in {\mathbb R}$. By It\^o's formula one can find the following questions:
\begin{itemize}
\item if the Lebesgue integral~\eqref{sec1-eq1.2} converges,
$$
{\mathcal K}^H_t(a)={\mathcal X}^H_t(a)\;?
$$
\item how to characterize the process ${\mathcal K}^H_t(a)$ if the Lebesgue integral~\eqref{sec1-eq1.2} diverges?
\end{itemize}

Clearly, the first question is positive by approximating. However, the second question is not obvious even if $H=\frac12$ and $f$ is a special function. Thus, the question arises again:
\begin{itemize}
\item for which functions does the Lebesgue integral~\eqref{sec1-eq1.2} diverge?
\end{itemize}

When $H=\frac12$, $B^H$ coincides with the standard Brownian motion $B$ and by the Engelbert-Schmidt zero-one law, the Lebesgue integral~\eqref{sec1-eq1.2} diverges if $F''=f$ is not locally integrable, i.e.
$$
\int_{-M}^M|f(x-a)|dx=\infty
$$
for some $M>0$. Thus, when
$$
|f(x)|\geq C|x|^{-\alpha}
$$
for $x\in {\mathbb R}$, the Lebesgue integral~\eqref{sec1-eq1.2} diverges, where $C>0,\alpha\geq 1$. For some special functions $F$, for examples,
\begin{equation}\label{sec1-eq1.3}
F''(x)=|x|^{-\gamma}{\rm sign}(x)
\end{equation}
with $1\leq \gamma<\frac32$, one studied the characterization and properties of the process ${\mathcal K}^{\frac12}_t(a)$. It\^o--McKean~\cite{Ito-McKean} first considered the process ${\mathcal K}^{\frac12}_t(a)$ and the Lebesgue integral~\eqref{sec1-eq1.2} for $F$ satisfying~\eqref{sec1-eq1.3} with $\gamma=1$.
For the process ${\mathcal K}^{\frac12}_t(a)$ and the Lebesgue integral~\eqref{sec1-eq1.2} driven by the function $F$ satisfying~\eqref{sec1-eq1.3}, some systematic studies are due to Biane-Yor~\cite{Biane-Yor}, Yamada~\cite{Yamada1,Yamada2,Yamada4}
and Yor~\cite{Yor1}, and some extensions and limit theorems are established by Bertoin~\cite{Bertoin1,Bertoin2},
Cherny~\cite{Cherny}, Csaki {\em et al }~\cite{Csaki1,Csaki2},
Csaki-Hu~\cite{Hu-Y}, Hu~\cite{Hu-Y}, Fitzsimmons-Getoor~\cite{Fitzsimmons-Getoor1,Fitzsimmons-Getoor2}, Mansuy-Yor~\cite{Mansuy-Yor}, Yor~\cite{Yor2} and the references therein. However, those researches apply only to Markov process, and for non-Markov processes there has only been little investigation on the integral functional. See Eddahbi-Vives~\cite{Eddahbi-Vives}, Gradinaru {\em et al.}~\cite{Grad2}, Yan~\cite{Yan8} and Yan-Zhang~\cite{Yan10}.

When $H\neq \frac12$ the second and third questions above are not trivial. The main difficulty consists in the fact that the stochastic integral
\begin{equation}\label{sec1-eq1.4}
\int_0^tF'(B^H_s-a)\delta B^H_s
\end{equation}
is a Skorohod integral with respect to the fBm and the integrand is not smooth. Therefore, its control is not obvious and one needs sharp estimates. The $L_2$ norm of this stochastic Skorohod integral involves the Malliavin derivatives of the integrand and tedious estimation on the joint density of the fBm. Moreover, for a nonsmooth function $f$ it is not easy to give an exact calculus of the moment of order $2$ for the Skorohod integral~\eqref{sec1-eq1.4} even if the simple functions $F'(x)=\log|x|$ and $F'(x)=|x|^{-\alpha}{\rm sign}(x)$ with
$\alpha>0$. But, when $H=\frac12$, the integral~\eqref{sec1-eq1.4} is It\^o's integral and its existence is obvious. On the other hand, it is unclear whether the Engelbert-Schmidt zero-one law actually holds for fBm $B^H$. Thus, it seems interesting to study the process ${\mathcal K}^H_t(a)$ and the Skorohod integral~\eqref{sec1-eq1.4} with the singular integrand for $H\neq \frac12$. In this paper, as a start reviewing the object and continued to Yan~\cite{Yan8}, we consider the integrals
\begin{equation}\label{sec1-eq1.5}
\int_0^t\frac{ds^{2H}}{B^H_s-a},\qquad a\in {\mathbb R}
\end{equation}
and the processes
\begin{equation}\label{sec1-eq1.6}
{\mathcal C}^H_t(a):=2\left(
F(B^H_t-a)-F(-a)-\int_0^tF'(B^H_s-a)\delta B^H_s\right),\qquad a\in {\mathbb R}
\end{equation}
with $t\geq 0$, where the integral in~\eqref{sec1-eq1.6} is the Skorohod integral and $F(x)=x\log{|x|}-x$. In the present paper we will consider the functional and discuss some related questions. We will divide the discussion as two parts since the research method of the case $\frac12<H<1$ is essentially different with the case $0<H<\frac12$. In Section~\ref{sec7} we study the case $0<H<\frac12$ and the case $\frac12<H<1$ is considered in Section~\ref{sec3}, Section~\ref{sec4} and Section~\ref{sec5}.

This paper is organized as follows. In Section~\ref{sec2} we present some preliminaries for fBm. In Section~\ref{sec3}, we consider the existence of ${\mathcal C}^H(a)$ for $\frac12<H<1$. In fact, by smoothness approximating one can prove the existence of the Skorohod integral
$$
\int_0^t\log|B^H_s-a|\delta B^H_s,
$$
however, it is not easy to give the exact estimates of the moments. To give the existence and exact estimates of the moments, we define the function
\begin{align*}
\Psi_{s,r,a,b}(x,y):=\varphi_{s,r}&(x,y)-\varphi_{s,r}(x,b)
\theta(1+b-y)\\
&-\varphi_{s,r}(a,y)\theta(1+a-x)
+\varphi_{s,r}(a,b)\theta(1+a-x)\theta(1+b-y)
\end{align*}
with $x,y,a,b\in {\mathbb R},s,r>0$, where $\theta(x)=1_{\{x>0\}}$ and $\varphi_{s,r}(x,y)$ is the density function of $(B^H_s,B^H_r)$, and show that the identity
\begin{equation}\label{sec1-eq1.8}
\begin{split}
E\left[G'_1(B^H_s-a)G'_2(B^H_r-b)\right]&=\int_{{\mathbb
R}^2} G'_1(x-a)G'_2(y-b)\Psi_{s,r,a,b}(x,y)dxdy
\end{split}
\end{equation}
holds for all $G_1,G_2\in C^\infty({\mathbb R})$ with compact supports and $G_1(1)=G_2(1)=0$. By using~\eqref{sec1-eq1.8} we show that the integral $\int_0^tF'_{+}(B^H_s-a)\delta B^H_s$ exists and
\begin{align*}
E\left|\int_0^tF'_{+}(B^H_s-a)\delta B^H_s\right|^2&
=\int_0^t\int_0^tE\left[F'_{+}(B^H_s-a)F'_{+}(B^H_r-a) \right]
\phi(s,r)dsdr\\
&\hspace{-2.5cm}+\int_0^tds\int_0^sd\xi\int_0^tdr\int_0^rd\eta
\phi(s,\eta)\phi(r,\xi)\int_a^{\infty}\int_a^\infty
\frac{\Psi_{s,r,a,a}(x,y)}{(x-a)(y-a)}dxdy
\end{align*}
with $\phi(s,r)=H(2H-1)|s-r|^{2H-2}$, where the integral $\int_0^{\cdot}F'_{+}(B^H_s)\delta B^H_s$ is the Skorohod
integral and
$$
F_{+}(x)=
\begin{cases}
0,& {\text {if $x\leq 0$}},\\
x\log{x}-x,& {\text {if $x>0$}}.
\end{cases}
$$
In Section~\ref{sec4}, for $\frac12<H<1$ we show that the representation
\begin{equation}\label{sec1-eq1.9}
{\mathcal C}^{H}_t(a)=\lim_{\varepsilon\downarrow 0}\int_0^t1_{\{
|B^H_s-a|\geq \varepsilon \}}\frac{ 2Hs^{2H-1}}{B^H_s-a}ds,\quad a\in {\mathbb R},t\geq 0
\end{equation}
holds in $L^2(\Omega)$, which points out that $a\mapsto \frac1{\pi}\,{\mathcal C}^H_t(a)$ coincides with the Hilbert transform of the weighted local time
$$
a\mapsto {\mathscr L}^H(a,t)=2H\int_0^t\delta(B^H_s-a)s^{2H-1}ds
$$
and the fractional version of Yamada's formula
$$
(B^H_t-a)\log|B^H_t-a|-(B^H_t-a)=-a\log|a|+a+\int_0^t\log|B^H_s-a|
\delta B^H_s+\frac12{\mathcal C}^{H}_t(a)
$$
holds. In section~\ref{sec5} we introduce the so-called {\em occupation type formula}
\begin{equation}\label{sec1-eq1.10}
\int_{\mathbb R}{\mathcal C}^H_t(a)g(a)da=2H\pi\int_0^t({\mathscr
H}g)(B^H_s)s^{2H-1}ds
\end{equation}
for all continuous function $g$ with compact support and $\frac12<H<1$, where ${\mathscr H}$ denotes Hilbert transform. In Section~\ref{sec7} we study the case $0<H<\frac12$ by using the {\it generalized quadratic covariation} introduced in Yan {\em et al}~\cite{Yan7}.


\section{Preliminaries}\label{sec2}


\subsection{Cauchy principal value}

It is known that the Cauchy principal value, named after Augustin Louis Cauchy, is a method for assigning values to certain improper integrals which would otherwise be undefined. Depending on the type of singularity in the integrand $f$, the Cauchy principal value is defined as one of the following:
\begin{align}\label{sec2-eq2000}
\lim_{\varepsilon\downarrow 0}\left(\int_a^{c-\varepsilon}f(x)dx+\int_{c+\varepsilon}^bf(x)dx \right)=\lim_{\varepsilon\downarrow 0}\int_a^b1_{\{|c-x|\geq \varepsilon\}}f(x),
\end{align}
where $c\in (a,b)$ is a unique point such that
$$
\int_a^bf(x)dx=\infty.
$$
The limiting operation given in~\eqref{sec2-eq2000} is called the (Cauchy) principal value of the divergent integral $\int_a^bf(x)dx$ and the limiting process displayed in~\eqref{sec2-eq2000} is denoted as
$$
{\rm v.p.}\int f(x)dx.
$$
The notation {\rm v.p.} (valeur principale) is seen in European writings. We have, as an example,
$$
{\rm v.p.}\int_a^b\frac{dx}{c-x}=\log\frac{c-a}{b-c}
$$
for all $a<c<b$. Moreover, for a Borel function $\varphi\,:\,{\mathbb R}_{+}\to {\mathbb R}$ with
$$
\int_a^{a+1}\frac{|\varphi(x)-\varphi(a)|}{x-a}dx
+\int_{a+1}^\infty\frac{|\varphi(x)|}{x-a}dx<\infty,
$$
we can define the Cauchy's principal value
\begin{align*}
{\rm v.p.}\int_a^\infty\frac{\varphi(x)}{x-a}dx:&=\int_a^{a+1}
\frac{\varphi(x)-\varphi(a)}{x-a}dx+\int_{a+1}^\infty
\frac{\varphi(x)}{x-a}dx\\
&=\lim_{\varepsilon\downarrow
0}\left(\int_{a+\varepsilon}^\infty\frac{\varphi(x)}{x-a}dx
+\varphi(a)\log{\varepsilon}\right).
\end{align*}

Recall that the Hilbert transform ${\mathscr H}f$ of $f\in L^2({\mathbb R})$ is defined as follows
\begin{equation}\label{sec3-eq5.23}
{\mathscr H}f(a):=\frac1\pi\lim_{\varepsilon\downarrow
0}\int_{\mathbb R}1_{\{|x-a|\geq
\varepsilon\}}\frac{f(x)dx}{x-a}=\frac1\pi{\rm v.p.}\int_{\mathbb R}\frac{f(x)dx}{x-a}\equiv \frac1\pi\;{\rm v.p.}\frac1x\ast f(x),
\end{equation}
where $\ast$ denotes the convolution in the theory of distributions, which plays an important role in real and complex analysis. It is also important to note that ${\mathcal H}f$ belongs to $L^2$ and
$$
\int_{\mathbb R}({\mathscr H}f(x))^2dx=\int_{\mathbb R}f^2(x)dx
$$
holds, and moreover, if $f$ is a H\"older continuous function with compact support, then the limit in~\eqref{sec3-eq5.23} exists for every $x\in {\mathbb R}$. For more aspects on these material we refer to King~\cite{King}.

\subsection{Fractional Brownian motion}

In this subsection, we briefly recall some basic definitions and
results of fBm. For more aspects on these material we refer to Al\'os et al.~\cite{Nua1}, Biagini {\it et al}~\cite{BHOZ}, Decreusefond-\"Ust\"unel~\cite{Dec}, Hu~\cite{Hu2},
Mishura~\cite{Mishura2}, Nourdin~\cite{Nourdin}, Nualart~\cite{Nua4} and the references therein. Throughout this paper we assume that $0<H<1$ is arbitrary but fixed and we let $B^H=\{B_t^H, 0\leq t\leq T\}$ be a one-dimensional
fBm with Hurst index $H$ defined on $(\Omega, \mathcal{F}^H, P)$.

Let $\mathcal H$ be the completion of the linear space ${\mathcal E}$ generated by the indicator functions ${1}_{[0,t]},
t\in [0,T]$ with respect to the inner product
$$
\langle {1}_{[0,s]},{1}_{[0,t]}
\rangle_{\mathcal H}=\frac{1}{2}\left[t^{2H}+s^{2H}-|t-s|^{2H}
\right].
$$
The application $\varphi\in {\mathcal E}\to B^{H}(\varphi)$ is an
isometry from ${\mathcal E}$ to the Gaussian space generated by
$B^{H}$ and it can be extended to ${\mathcal H}$. When $\frac12<H<1$ the Hilbert space ${\mathcal H}$ can be written as
$$
{\mathcal H}=\left\{\varphi:[0,T]\to {\mathbb
R}\;\;|\;\;\|\varphi\|_{\mathcal H}<\infty\right\},
$$
where
$$
\|\varphi\|^2_{\mathcal
H}:=\int^T_0\int^T_0\varphi(s)\varphi(r) \phi(s,r)dsdr
$$
with $\phi(s,r)=H(2H-1)|s-r|^{2H-2}$. Notice that the elements of the
Hilbert space ${\mathcal H}$ may not be functions but distributions
of negative order (see, for instance, Pipiras-Taqqu~\cite{Pipiras}). Denote by $\mathcal S$ the set of smooth functionals of the form
\begin{equation}\label{sec2-eq2.0}
F=f(B^{H}(\varphi_1),B^{H}(\varphi_2),\ldots,B^{H}(\varphi_n)),
\end{equation}
where $f\in C^{\infty}_b({\mathbb R}^n)$ ($f$ and all its
derivatives are bounded) and $\varphi_i\in {\mathcal H}$. The {\it
derivative operator} $D^{H}$ (the Malliavin derivative) of a
functional $F$ of the form~\eqref{sec2-eq2.0} is defined as
$$
D^{H}F=\sum_{j=1}^n\frac{\partial f}{\partial
x_j}(B^{H}(\varphi_1),B^{H}(\varphi_2),
\ldots,B^{H}(\varphi_n))\varphi_j.
$$
The derivative operator $D^{H}$ is then a closable operator from
$L^2(\Omega)$ into $L^2(\Omega;{\mathcal H})$. We denote by
${\mathbb D}^{1,2}$ the closure of ${\mathcal S}$ with respect to
the norm
$$
\|F\|_{1,2}:=\sqrt{E|F|^2+E\|D^{H}F\|^2_{{\mathcal H}}}.
$$
The {\it divergence integral} $\delta^{H}$ is the adjoint of
derivative operator $D^{H}$. That is, we say that a random variable
$u$ in $L^2(\Omega;{\mathcal H})$ belongs to the domain of the
divergence operator $\delta^{H}$, denoted by ${\rm
{Dom}}(\delta^H)$, if
$$
E\left|\langle D^{H}F,u\rangle_{\mathcal H}\right|\leq
c\|F\|_{L^2(\Omega)}
$$
for every $F\in \mathcal S$. In this case $\delta^{H}(u)$ is defined
by the duality relationship
\begin{equation}\label{sec2-eq2.1}
E\left[F\delta^{H}(u)\right]=E\langle D^{H}F,u\rangle_{\mathcal H}
\end{equation}
for any $u\in {\mathbb D}^{1,2}$. We have ${\mathbb D}^{1,2}\subset
{\rm {Dom}}(\delta^H)$, and when $\frac12<H<1$ we have
\begin{align}\label{sec2-eq2.100}
E\left[\delta^{H}(u)^2\right]&=E\|u\|^2_{\mathcal
H}+E\int_{[0,T]^4}D^{H}_\xi u_rD^{H}_\eta
u_s\phi(\eta,r)\phi(\xi,s)dsdrd\xi d\eta
\end{align}
for any $u\in {\mathbb D}^{1,2}$. By the duality between $D^H$ and $\delta^H$ one have that the following result for the convergence of divergence integrals which is given in Naulart~\cite{Nua4}.
\begin{proposition}\label{prop2.1}
Let $\{u_n,n=1,2,\ldots\}\subset {\rm Dom}(\delta^H)$ such that $u_n\to u$
in $L^2(\Omega;{\mathbb H})$ for some $u\in L^2(\Omega;{\mathbb H})$. If that there exists $U\in L^2(\Omega)$ such that
$$
\delta^{H}(u_n)\longrightarrow U
$$
in $L^2(\Omega;{\mathbb H})$, as $n\to \infty$. Then, $u$ belongs to ${\rm Dom}(\delta^{H})$ and $\delta^{H}(u)=U$.
\end{proposition}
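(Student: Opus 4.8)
\emph{The plan is to use the duality relationship \eqref{sec2-eq2.1} together with the density of $\mathcal S$ in $L^2(\Omega)$; this is precisely the standard argument establishing the closedness of the divergence operator.} First I would fix an arbitrary smooth functional $F\in\mathcal S$. Since each $u_n$ lies in ${\rm Dom}(\delta^H)$, the duality relationship gives
$$
E\left[F\,\delta^{H}(u_n)\right]=E\langle D^{H}F,u_n\rangle_{\mathcal H}
$$
for every $n$, and the strategy is then to pass to the limit $n\to\infty$ on both sides.

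For the left-hand side, $F$ is bounded because it is of the form \eqref{sec2-eq2.0}, so $F\in L^2(\Omega)$; combined with the hypothesis that $\delta^H(u_n)\to U$ (in $L^2(\Omega)$) this yields $E[F\,\delta^H(u_n)]\to E[FU]$. For the right-hand side, $D^HF\in L^2(\Omega;\mathcal H)$ and $u_n\to u$ in $L^2(\Omega;\mathcal H)$, so by the Cauchy--Schwarz inequality in $L^2(\Omega;\mathcal H)$,
$$
\left|E\langle D^{H}F,u_n-u\rangle_{\mathcal H}\right|\le \|D^{H}F\|_{L^2(\Omega;\mathcal H)}\,\|u_n-u\|_{L^2(\Omega;\mathcal H)}\longrightarrow 0,
$$
whence $E\langle D^HF,u_n\rangle_{\mathcal H}\to E\langle D^HF,u\rangle_{\mathcal H}$. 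Therefore $E[FU]=E\langle D^{H}F,u\rangle_{\mathcal H}$ for all $F\in\mathcal S$.

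From this identity I would read off both conclusions. First, $\left|E\langle D^{H}F,u\rangle_{\mathcal H}\right|=\left|E[FU]\right|\le \|U\|_{L^2(\Omega)}\|F\|_{L^2(\Omega)}$ for every $F\in\mathcal S$, which is exactly the defining estimate for membership in ${\rm Dom}(\delta^H)$; hence $u\in{\rm Dom}(\delta^H)$. Second, once $u\in{\rm Dom}(\delta^H)$ the divergence $\delta^H(u)$ is characterized by $E[F\,\delta^H(u)]=E\langle D^HF,u\rangle_{\mathcal H}=E[FU]$ for all $F\in\mathcal S$, and since $\mathcal S$ is dense in $L^2(\Omega)$ this forces $\delta^H(u)=U$.

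There is no serious obstacle here; the only points requiring a little care are the correct interpretation of the hypothesis (the convergence $\delta^H(u_n)\to U$ must be understood in $L^2(\Omega)$, since $\delta^H(u_n)$ is a scalar random variable rather than an $\mathcal H$-valued one) and the justification that test functionals in $\mathcal S$ suffice both to propagate the bound and to identify the limit, which rests on the closability of $D^H$ and the density of $\mathcal S$ in $L^2(\Omega)$.
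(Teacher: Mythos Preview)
Your argument is correct and is precisely the standard closedness proof for the divergence operator via the duality relation \eqref{sec2-eq2.1}. The paper does not actually supply a proof of this proposition; it simply quotes the result from Nualart~\cite{Nua4}, where the same duality-and-density argument you wrote is given.
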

We will use the notation
$$
\delta^{H}(u)=\int_0^Tu_s\delta B^{H}_s
$$
to express the Skorohod integral of a process $u$, and the
indefinite Skorohod integral is defined as
$\int_0^tu_s\delta B^{H}_s=\delta^H(u{1}_{[0,t]})$. Recall the It\^o type formula for fBm $B^H$,
\begin{align*}
f(B_t^H)=f(0)+&\int_0^t
f'(B_s^H)\delta B_s^H+H\int_0^tf''(B_s^H)s^{2H-1}ds
\end{align*}
for any $f\in C^{2}({\mathbb R})$. Also recall
that $B^H$ has a local time ${\mathcal L}^{H}(x,t)$ continuous in
$(x,t)\in {\mathbb R}\times [0,\infty)$ which satisfies the
occupation formula (see Geman-Horowitz~\cite{Geman})
\begin{equation}\label{sec2-1-eq1}
\int_0^t\Phi(B_s^{H})ds=\int_{\mathbb R}\Phi(x){\mathcal L}^{H}(x,t)dx
\end{equation}
for every nonnegative bounded function $\Phi$ on ${\mathbb R}$, and such that
$$
{\mathcal L}^{H}(x,t)=\int_0^t\delta(B_s^H-x)ds=\lim_{\epsilon
\downarrow
0}\frac{1}{2\epsilon}\lambda\big(s\in[0,t],|B_s^H-x|<\epsilon\big),
$$
where $\lambda$ denotes Lebesgue measure and $\delta(x)$ is the
Dirac delta function. It is well-known that the local time ${\mathcal L}^H(x,t)$ has H\"older continuous paths of order $\gamma\in (0,1-H)$ in time, and of order $\kappa\in (0,\frac{1-H}{2H})$ in the space variable, provided $H\geq \frac13$. Define the so-called weighted local time
${\mathscr L}^{H}(x,t)$ of $B^H$ at $x$ as follows
$$
{\mathscr L}^{H}(x,t)=2H\int_0^ts^{2H-1}{\mathcal L}^{H}(x,ds)\equiv
2H\int_0^t\delta(B_s^H-x)s^{2H-1}ds.
$$
The H\"older continuity properties of ${\mathcal L}^H(x,t)$ can be transferred to the weighted local time ${\mathscr L}^{H}(x,t)$, and then ${\mathscr L}^{H}$ has a compact support in $x$, and the
following Tanaka formula holds (see Coutin {\it et al}~\cite{Cout} and Hu {\it et al}~\cite{Hu1}):
\begin{align}\label{eq2.4}
(B_t^H-x)^{-}&=(-x)^{-}-\int_0^t{1}_{\{B^H_s<
x\}}\delta B^H_s+\frac12{\mathscr L}^{H}(x,t).
\end{align}

At the end of this section we will establish some technical estimates associated with fractional Brownian motion. For simplicity we let $C$ stand for a positive constant depending only on the subscripts and its value may be different in different appearance, and this assumption is also adaptable to $c$.
\begin{lemma}\label{lem2.10}
For all $r,s\in [0,T],\;s\geq r$ and $0<H<1$ we have
\begin{equation}\label{eq3.1}
\frac12(2-2^H)r^{2H}(s-r)^{2H}\leq s^{2H}r^{2H}-\mu^2_{s,r}\leq
2r^{2H}(s-r)^{2H},
\end{equation}
where $\mu_{s,r}=E(B^H_sB^H_r)$.
\end{lemma}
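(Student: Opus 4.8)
The plan is to write $\mu_{s,r} = E[B^H_sB^H_r] = \tfrac12(s^{2H}+r^{2H}-(s-r)^{2H})$ and factor the quantity $s^{2H}r^{2H}-\mu_{s,r}^2$ as a difference of squares:
\[
s^{2H}r^{2H}-\mu_{s,r}^2 = (s^Hr^H-\mu_{s,r})(s^Hr^H+\mu_{s,r}).
\]
First I would compute each factor explicitly. For the first factor,
\[
s^Hr^H - \mu_{s,r} = s^Hr^H - \tfrac12 s^{2H}-\tfrac12 r^{2H}+\tfrac12(s-r)^{2H}
= \tfrac12(s-r)^{2H} - \tfrac12(s^H-r^H)^2,
\]
and for the second,
\[
s^Hr^H + \mu_{s,r} = \tfrac12(s^H+r^H)^2 - \tfrac12(s-r)^{2H}.
\]
So the whole expression equals $\tfrac14\bigl((s-r)^{2H}-(s^H-r^H)^2\bigr)\bigl((s^H+r^H)^2-(s-r)^{2H}\bigr)$. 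The task reduces to bounding each of these two bracketed terms above and below by comparable multiples of $r^{2H}(s-r)^{2H}$, after which multiplying the bounds gives the claim.

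Next I would establish the elementary scalar inequalities needed. Write $s = r+u$ with $u = s-r\ge 0$, and rescale by $r$: set $x = u/r\ge 0$, so that everything is $r^{2H}$ times a function of $x$ alone. The first bracket becomes $r^{2H}\bigl(x^{2H}-((1+x)^H-1)^2\bigr)$ and the second becomes $r^{2H}\bigl(((1+x)^H+1)^2 - x^{2H}\bigr)$. For the first bracket I would use that $(1+x)^H-1 \le \min(Hx, x^H)$ for $H\in(0,1)$ (concavity of $t\mapsto t^H$ and subadditivity), hence $((1+x)^H-1)^2\le x^{2H}$, giving nonnegativity, and for a lower bound I would show $x^{2H}-((1+x)^H-1)^2 \ge c_H x^{2H}$ uniformly in $x\ge 0$ — this needs a little care as $x\to\infty$ (where $((1+x)^H-1)^2/x^{2H}\to 1$, so one must be quantitative) but follows because $(1+x)^H-1 = x^H(1+x^{-1})^H - 1 \le x^H - (1 - H x^{-1}/\cdots)$, or more simply by noting the ratio is continuous on $[0,\infty]$ and strictly less than $1$ everywhere, with the limiting value at $\infty$ being $0$ for the first bracket's ratio... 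I would double-check the endpoint behavior carefully here. For the second bracket, $((1+x)^H+1)^2 - x^{2H} \ge 1$ trivially (since $(1+x)^H\ge x^H$), giving a lower bound, and an upper bound of order $\max(1, x^{2H})$ comparable to $(1+x)^{2H}$ follows from $(1+x)^H+1 \le 2(1+x)^H$.

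The constants $\tfrac12(2-2^H)$ and $2$ in the statement suggest the sharp route is to pair the bounds asymmetrically: bound the first bracket below by using the value at a worst-case point and above by $x^{2H}$, and similarly for the second. I expect the main obstacle to be getting the \emph{sharp} constant $\tfrac12(2-2^H)$ on the left rather than just \emph{some} positive constant — this likely requires identifying exactly where the minimum of $\bigl(x^{2H}-((1+x)^H-1)^2\bigr)\bigl(((1+x)^H+1)^2-x^{2H}\bigr)/x^{2H}$ over $x\in(0,\infty)$ is attained (plausibly at $x=1$, i.e. $s=2r$, which is consistent with the $2^H$ appearing), and then a monotonicity argument to confirm it is a global minimum. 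The upper bound with constant $2$ should be comparatively routine: from $(s-r)^{2H}-(s^H-r^H)^2\le (s-r)^{2H}$ and $(s^H+r^H)^2-(s-r)^{2H}\le (s^H+r^H)^2 \le (2s^H)^2$ one gets a bound of the form $C s^{2H}(s-r)^{2H}$, and replacing $s^{2H}$ by $r^{2H}$ requires instead bounding the second factor by something of order $r^{2H}$, which holds near $s=r$ but not for $s\gg r$ — so the honest upper bound argument must also split on the size of $x=(s-r)/r$ and use that when $x$ is large the first bracket is correspondingly small, again reducing to a single-variable estimate that I would verify directly.
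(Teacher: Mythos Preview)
Your difference-of-squares factorization is correct, but as you already suspect, bounding the two brackets separately does not produce the stated constants: for large $x=(s-r)/r$ the second bracket $(s-r)^{2H}-(s^H-r^H)^2$ is only of size $r^H(s-r)^H$, not $(s-r)^{2H}$, while the first grows like $s^Hr^H$, so you are forced back to analysing the product as a single function of $x$. That analysis (locating the extremum, then verifying it gives $\tfrac12(2-2^H)$) is left undone, and even granting your guess that the extremum sits at $s=2r$ (which the symmetry $r\leftrightarrow s-r$ does suggest), you would still owe the nontrivial check $(2^{2H-1}-1)^2\le 2^{H-1}$.

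The paper does not spell out its proof but points to the elementary inequality $(1+x)^\alpha\le 1+(2^\alpha-1)x^\alpha$ for $x,\alpha\in[0,1]$; the natural way to use it bypasses your factorization entirely. Instead of splitting $s^{2H}r^{2H}-\mu^2$ as a product, complete the square so that $r^{2H}(s-r)^{2H}$ appears as the leading term:
\[
s^{2H}r^{2H}-\mu_{s,r}^2 \;=\; r^{2H}(s-r)^{2H}\;-\;\tfrac14\bigl(s^{2H}-r^{2H}-(s-r)^{2H}\bigr)^2
\]
(probabilistically, $\rho_{s,r}^2=r^{2H}\mathrm{Var}(B^H_s\!\mid\! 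B^H_r)=r^{2H}\mathrm{Var}(B^H_s-B^H_r)-\mathrm{Cov}(B^H_s-B^H_r,B^H_r)^2$). The upper bound with constant $1$ is now immediate, and the lower bound reduces to the single scalar inequality
\[
\bigl|s^{2H}-r^{2H}-(s-r)^{2H}\bigr|\;\le\;2^{(H+1)/2}\,r^{H}(s-r)^{H}.
\]
By homogeneity and the symmetry in $r$ and $s-r$ one may take $x\in(0,1]$ and ask that $|(1+x)^{2H}-1-x^{2H}|\le 2^{(H+1)/2}x^H$. Squaring $(1+x)^H\le 1+(2^H-1)x^H$ bounds $(1+x)^{2H}-1-x^{2H}$ above by $2(2^H-1)x^H$, the trivial bound $(1+x)^{2H}\ge 1$ bounds it below by $-x^{2H}\ge -x^H$, and one finishes by checking $\max\bigl(1,\,2(2^H-1)\bigr)\le 2^{(H+1)/2}$. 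This delivers the constant $\tfrac12(2-2^H)$ with no extremum search and no case split on the size of $x$.
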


By the local nondeterminacy of fBm we can prove the lemma (Yan {\em et al}~\cite{Yan1}), and Yan {\em et al}~\cite{Yan7} gave an elementary proof by using the inequality
\begin{align}\label{eq3.3000-1}
(1+x)^\alpha&\leq 1+(2^\alpha-1)x^\alpha
\end{align}
with $0\leq x,\alpha\leq 1$. It is important to note that inequality~\eqref{eq3.3000-1} is stronger than the well known
(Bernoulli) inequality
$$
(1+x)^\alpha\leq 1+\alpha x^\alpha\leq 1+x^\alpha,
$$
because $2^\alpha-1\leq \alpha$ for all $0\leq \alpha\leq 1$.
\begin{lemma}\label{lem2.20}
For all $s>r>0$ and $\frac12<H<1$ we have
\begin{equation}\label{eq2.1001}
c_H(s-r)rs^{2H-2}\leq \mu-r^{2H}\leq C_H(s-r)rs^{2H-2}
\end{equation}
and
\begin{equation}\label{eq2.1002}
c_H(s-r)s^{2H-1}\leq s^{2H}-\mu\leq C_H(s-r)s^{2H-1}
\end{equation}
where $\mu_{s,r}=E(B^H_sB^H_r)$.
\end{lemma}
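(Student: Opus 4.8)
The plan is to deduce both two-sided estimates from elementary calculus bounds on $x\mapsto x^{2H}$ (recall $1<2H<2$), starting from the explicit covariance $\mu_{s,r}=\frac12\big(s^{2H}+r^{2H}-(s-r)^{2H}\big)$. Write
$$
D:=s^{2H}-r^{2H}-(s-r)^{2H}=2(\mu-r^{2H}),\qquad E:=s^{2H}-r^{2H}+(s-r)^{2H}=2(s^{2H}-\mu),
$$
so $D\ge 0$ by superadditivity of $x\mapsto x^{2H}$ (since $2H>1$) and $E>0$; it then suffices to sandwich $D$ between constant multiples of $(s-r)rs^{2H-2}$ and $E$ between constant multiples of $(s-r)s^{2H-1}$.

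For \eqref{eq2.1002} the upper bound is immediate from the mean value theorem: $s^{2H}-r^{2H}=2H\theta^{2H-1}(s-r)\le 2Hs^{2H-1}(s-r)$ for some $\theta\in(r,s)$, while $(s-r)^{2H}\le s^{2H-1}(s-r)$, hence $E\le(2H+1)s^{2H-1}(s-r)$. For the lower bound one cannot discard the wrong summand, so I would split according to whether $s\le 2r$ or $s>2r$: if $s\le 2r$ then $\theta\ge r\ge s/2$ gives $s^{2H}-r^{2H}\ge 2H(s/2)^{2H-1}(s-r)$, and if $s>2r$ then $s-r>s/2$ gives $(s-r)^{2H}\ge(s/2)^{2H-1}(s-r)$; in either case $E$ is bounded below by a positive constant times $(s-r)s^{2H-1}$.

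The estimate \eqref{eq2.1001} for $D$ is the more delicate one, and both bounds rely on the same dichotomy $s\le 2r$ versus $s>2r$, chosen so that in the first regime the difference $s^{2H}-r^{2H}$ and in the second the difference $s^{2H}-(s-r)^{2H}$ is comparable to $(s-r)rs^{2H-2}$. When $s\le 2r$ I would use $D\le s^{2H}-r^{2H}\le 2Hs^{2H-1}(s-r)\le 4Hr(s-r)s^{2H-2}$ for the upper bound, and for the lower bound set $x:=(s-r)/r\in(0,1]$ and write $D=r^{2H}\big((1+x)^{2H}-1-x^{2H}\big)=:r^{2H}\rho(x)$, where $\rho(0)=0$ and $\rho'(x)=2H\big((1+x)^{2H-1}-x^{2H-1}\big)\ge 2H(2H-1)(1+x)^{2H-2}$ is bounded below by a positive constant on $(0,1]$, whence $\rho(x)\ge c_Hx$ and $D\ge c_Hr^{2H-1}(s-r)\ge c_Hs^{2H-2}r(s-r)$. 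When $s>2r$ I would work with $s^{2H}-(s-r)^{2H}=2H\eta^{2H-1}r$ for $\eta\in(s-r,s)\subset(s/2,s)$, combining $s/2<\eta<s$ with $r^{2H}=r\,r^{2H-1}<r(s/2)^{2H-1}$ to get $(2H-1)2^{1-2H}s^{2H-1}r\le D\le 2Hs^{2H-1}r$, and then $s^{2H-1}r$ is comparable to $(s-r)rs^{2H-2}$ because $s/2<s-r<s$. The main obstacle is exactly this case distinction: any bound for $D$ obtained from a single mean value theorem applied to just one of its two difference terms is sharp in only one of the two regimes, so the split $s\le 2r$ / $s>2r$ is unavoidable; everything else is routine manipulation of the kind already used to prove Lemma \ref{lem2.10}.
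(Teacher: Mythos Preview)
Your argument is correct, but it takes a different and more laborious route than the paper's. The paper proves \eqref{eq2.1001} in one stroke by the substitution $x=r/s\in(0,1)$, writing
\[
\mu-r^{2H}=\tfrac12 s^{2H}\bigl(1-x^{2H}-(1-x)^{2H}\bigr),
\]
and observing that the quotient $q(x):=\dfrac{1-x^{2H}-(1-x)^{2H}}{x(1-x)}$ extends continuously to $[0,1]$ with $q(0)=q(1)=2H>0$; compactness then gives $c_H\le q(x)\le C_H$ on $[0,1]$, which is exactly \eqref{eq2.1001}. Estimate \eqref{eq2.1002} is dismissed as ``clear'' (indeed your mean-value upper bound plus the same substitution for the lower bound, or a second continuity argument, suffices).

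So your remark that the dichotomy $s\le 2r$ versus $s>2r$ is \emph{unavoidable} is not accurate: the paper avoids any case split by hiding the two regimes inside a single continuous-function-on-a-compact-interval argument. What your approach buys is explicit constants (e.g.\ $C_H=2H+1$ in the upper bound of \eqref{eq2.1002}, $c_H=(2H-1)2^{1-2H}$ in places), at the cost of more bookkeeping; the paper's approach is shorter and conceptually cleaner but nonconstructive. Both are valid and both are elementary in the spirit of Lemma~\ref{lem2.10}.
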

\begin{proof}
For the inequalities~\eqref{eq2.1002} we have
$$
\mu-r^{2H}=\frac12\left(s^{2H}-r^{2H}-(s-r)^{2H}\right)
=\frac12s^{2H}\left(1-x^{2H}-(1-x)^{2H}\right)
$$
with $x=\frac{r}s$. By the continuity of the functions

$$
f_1(x)=\frac{1-x^{2H}-(1-x)^{2H}}{x(1-x)},\qquad f_2(x)=\frac{x(1-x)}{1-x^{2H}-(1-x)^{2H}}
$$
for $x\in (0,1)$ and $\lim_{x\to 0}f_i(x)=\lim_{x\to 1}f_i(x)=2H$ for $i=1,2$, we see that there exists a constant $C>0$ such that
$$
\frac1Cx(1-x)\leq 1-x^{2H}-(1-x)^{2H}\leq Cx(1-x)
$$
for all $x\in [0,1]$, which gives the inequalities~\eqref{eq2.1001}. The inequalities~\eqref{eq2.1002} is clear.
\end{proof}

\section{The existence of ${\mathcal C}^H(a)$}~\label{sec3}

Beside on the smooth approximation one can prove the existence of ${\mathcal C}^H$. In order to use the smooth approximation, we define the function $(x,y)\mapsto \Psi_{s,r,a,b}(x,y)$ on ${\mathbb R}^2$ by
\begin{align*}
\Psi_{s,r,a,b}(x,y):=\varphi_{s,r}&(x,y)-\varphi_{s,r}(x,b)\theta(1+b-y)\\
&-\varphi_{s,r}(a,y)\theta(1+a-x)
+\varphi_{s,r}(a,b)\theta(1+a-x)\theta(1+b-y)
\end{align*}
with $s,r>0$ and $a,b\in {\mathbb R}$, where $\theta(x)=1_{\{x>0\}}$ and $\varphi_{s,r}(x,y)$ denotes the density function of $(B^H_s,B^H_r)$. That is, \begin{equation}\label{sec3-eq3.2000}
\varphi_{s,r}(x,y)=\frac1{2\pi\rho_{s,r}}\exp\left\{
-\frac{1}{2\rho^2_{s,r}}\left( r^{2H}x^2-2\mu_{s,r}
xy+s^{2H}y^2\right)\right\},
\end{equation}
where $\mu_{s,r}=E(B^H_sB^H_r)$ and $\rho^2_{s,r}=(rs)^{2H}-\mu^2_{s,r}$.
Denote the density function of $B^H_s$ by $\varphi_s(x)$. The following Lemmas give some properties and estimates of $\Psi_{s,r,a,b}(x,y)$. The first lemma is a simple calculus exercise.
\begin{lemma}\label{lem3.-1}
Let $G_i\in C^\infty({\mathbb R})$ have compact supports for $i=1,2$. Then we have
\begin{equation}\label{sec3-eq3.5011}
\begin{split}
\int_{{\mathbb R}^2}&G'_1(x-a)G'_2(y-b)\varphi_{s,r}(x,y)dxdy\\
&=\int_{{\mathbb
R}^2} G'_1(x-a)G'_2(y-b)\Psi_{s,r,a,b}(x,y)dxdy\\
&\hspace{2cm}-G_2(1)\int_{{\mathbb R}}G_1(x-a)\frac{\partial}{\partial x}\varphi_{s,r}(x,b)dx\\
&\hspace{2cm}-G_1(1)\int_{{\mathbb R}}G_2(y-b)\frac{\partial}{\partial y}\varphi_{s,r}(a,y)dy-\varphi_{s,r}(a,b)G_1(1)G_2(1)
\end{split}
\end{equation}
for all $r,s>0$ and $a,b\in {\mathbb R}$, and moreover, if $G_i(1)=0$ for $i=1,2$, we then have
\begin{equation}\label{sec3-eq3.5000}
\int_{{\mathbb R}^2}G'_1(x-a)G'_2(y-b)\varphi_{s,r}(x,y)dxdy=\int_{{\mathbb
R}^2} G'_1(x-a)G'_2(y-b)\Psi_{s,r,a,b}(x,y)dxdy
\end{equation}
for all $r,s>0$ and $a,b\in {\mathbb R}$.
\end{lemma}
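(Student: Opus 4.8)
The plan is to verify \eqref{sec3-eq3.5011} directly by integrating against each of the four terms defining $\Psi_{s,r,a,b}$ and then invoking \eqref{sec3-eq3.5000} as an immediate consequence. Writing out the right‑hand side of \eqref{sec3-eq3.5011} from the definition of $\Psi$, the first term contributes exactly the left‑hand side $\int_{{\mathbb R}^2}G_1'(x-a)G_2'(y-b)\varphi_{s,r}(x,y)\,dx\,dy$, so everything reduces to showing that the remaining three terms of $\Psi$ — the two products with a single $\theta$ and the one with two $\theta$'s — cancel against the three correction terms on the right of \eqref{sec3-eq3.5011}. For the term $-\varphi_{s,r}(x,b)\theta(1+b-y)$, I would integrate in $y$ first: since $\theta(1+b-y)=1_{\{y<1+b\}}$, one has $\int_{\mathbb R}G_2'(y-b)\theta(1+b-y)\,dy=\int_{-\infty}^{1+b}G_2'(y-b)\,dy=G_2(1)-G_2(-\infty)=G_2(1)$ because $G_2$ has compact support; the leftover $x$‑integral $-G_2(1)\int_{\mathbb R}G_1'(x-a)\varphi_{s,r}(x,b)\,dx$ is then rewritten by integration by parts in $x$ as $-G_2(1)\bigl(-\int_{\mathbb R}G_1(x-a)\frac{\partial}{\partial x}\varphi_{s,r}(x,b)\,dx\bigr)$, with no boundary contribution since $G_1$ has compact support — wait, this yields $+G_2(1)\int_{\mathbb R}G_1(x-a)\partial_x\varphi_{s,r}(x,b)\,dx$, whereas the target correction term carries a minus sign, so one must track signs carefully; the resolution is that $\Psi$ enters \eqref{sec3-eq3.5011} with the overall structure ``$\int G_1'G_2'\Psi = \int G_1'G_2'\varphi - (\text{corrections})$'', i.e.\ the $\theta$‑terms of $\Psi$ produce the corrections with the signs as displayed. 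By the identical computation the term $-\varphi_{s,r}(a,y)\theta(1+a-x)$ contributes $-G_1(1)\int_{\mathbb R}G_2(y-b)\frac{\partial}{\partial y}\varphi_{s,r}(a,y)\,dy$ after integrating in $x$ and integrating by parts in $y$.

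The final term $+\varphi_{s,r}(a,b)\theta(1+a-x)\theta(1+b-y)$ factors completely: integrating in $x$ gives $\int_{-\infty}^{1+a}G_1'(x-a)\,dx=G_1(1)$ and in $y$ gives $G_2(1)$, so this term contributes $\varphi_{s,r}(a,b)G_1(1)G_2(1)$, matching the last correction term $-\varphi_{s,r}(a,b)G_1(1)G_2(1)$ up to the sign convention noted above. Assembling the four pieces gives \eqref{sec3-eq3.5011}. Then \eqref{sec3-eq3.5000} follows trivially: if $G_1(1)=G_2(1)=0$ all three correction terms vanish.

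The only points requiring care — and the ``main obstacle,'' such as it is — are bookkeeping ones: (i) making sure every integration by parts in $x$ or $y$ is justified, which is automatic because $\varphi_{s,r}(\cdot,\cdot)$ is a smooth rapidly‑decaying Gaussian density and $G_1,G_2$ (hence $G_1(\cdot-a),G_2(\cdot-b)$) have compact support, so all boundary terms at $\pm\infty$ are zero; (ii) keeping the orientation of the half‑line right in each antiderivative evaluation, e.g.\ $\int_{-\infty}^{1+a}G_1'(x-a)\,dx=G_1(1)$ not $-G_1(1)$, since the lower limit contributes $G_1(-\infty)=0$; and (iii) consistently tracking the global sign with which the $\theta$‑terms of $\Psi$ enter, so that the three surviving terms land with exactly the signs shown in \eqref{sec3-eq3.5011}. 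No probabilistic input beyond the explicit formula \eqref{sec3-eq3.2000} for $\varphi_{s,r}$ is needed; this is, as the paper says, ``a simple calculus exercise,'' and I would present it in three short displayed computations (one per nontrivial term) followed by the one‑line deduction of \eqref{sec3-eq3.5000}.
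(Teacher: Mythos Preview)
Your proposal is correct and is precisely the ``simple calculus exercise'' the paper refers to without supplying details: expand $\Psi_{s,r,a,b}$ into its four summands, integrate each $\theta$-factor against the corresponding $G_i'$ to produce $G_i(1)$, and integrate by parts once in the remaining variable using the compact support of $G_i$ and the decay of $\varphi_{s,r}$. The only slip is in your phrasing of the sign resolution --- in fact the three $\theta$-terms contribute $+G_2(1)\int G_1(x-a)\partial_x\varphi_{s,r}(x,b)\,dx$, $+G_1(1)\int G_2(y-b)\partial_y\varphi_{s,r}(a,y)\,dy$, and $+\varphi_{s,r}(a,b)G_1(1)G_2(1)$ to $\int G_1'G_2'\Psi$, so that $\int G_1'G_2'\Psi=\int G_1'G_2'\varphi + (\text{these three})$ and moving them to the other side gives exactly the minus signs in \eqref{sec3-eq3.5011}; with this correction your term-by-term computations are right and \eqref{sec3-eq3.5000} follows immediately.
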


\begin{lemma}\label{lem3.0}
For any $x,y,z\in {\mathbb R}$ and $\beta\in [0,1]$ we have
\begin{equation}\label{lem3.0-eq1}
|\varphi_{s,r}(x,y)-\varphi_{s,r}(z,y)|\leq \frac{r^{\beta H}}{\rho_{s,r}^{1+\beta}}|x-z|^\beta e^{-\frac{\beta }{2r^{2H}}y^2}
\end{equation}
and
\begin{equation}\label{lem3.0-eq2}
|\varphi_{s,r}(x,y)-\varphi_{s,r}(x,z)|\leq \frac{s^{\beta H}}{\rho_{s,r}^{1+\beta}}|y-z|^\beta e^{-\frac{\beta }{2s^{2H}}x^2}.
\end{equation}
\end{lemma}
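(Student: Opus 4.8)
The plan is to obtain \eqref{lem3.0-eq1} and \eqref{lem3.0-eq2} by interpolating between the trivial case $\beta=0$ and the Lipschitz case $\beta=1$. Since the roles of $(x,r)$ and $(y,s)$ in the Gaussian density \eqref{sec3-eq3.2000} are symmetric, it is enough to prove \eqref{lem3.0-eq1}; the estimate \eqref{lem3.0-eq2} follows by the same argument with $x,r$ replaced by $y,s$. Throughout we may assume $s\ne r$, so that $\rho_{s,r}>0$ by Lemma~\ref{lem2.10} and $\varphi_{s,r}$ is a genuine nondegenerate density.

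First I would complete the square in the first variable: using $\rho^2_{s,r}=(rs)^{2H}-\mu^2_{s,r}$ one has $r^{2H}x^2-2\mu_{s,r}xy+s^{2H}y^2=r^{2H}(x-\mu_{s,r}r^{-2H}y)^2+\rho^2_{s,r}r^{-2H}y^2$, hence
\begin{equation*}
\varphi_{s,r}(x,y)=\frac1{2\pi\rho_{s,r}}\exp\left\{-\frac{r^{2H}}{2\rho^2_{s,r}}\left(x-\frac{\mu_{s,r}}{r^{2H}}y\right)^2\right\}\exp\left\{-\frac{y^2}{2r^{2H}}\right\}.
\end{equation*}
This displays exactly the Gaussian weight occurring in \eqref{lem3.0-eq1}. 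The case $\beta=0$ is then immediate, since $|\varphi_{s,r}(x,y)-\varphi_{s,r}(z,y)|\le\varphi_{s,r}(x,y)+\varphi_{s,r}(z,y)\le\frac1{\pi\rho_{s,r}}\le\frac1{\rho_{s,r}}=:A$.

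For $\beta=1$ I would apply the mean value theorem to $t\mapsto\varphi_{s,r}(t,y)$ and estimate the derivative $\partial_x\varphi_{s,r}(\xi,y)=-\rho^{-2}_{s,r}(r^{2H}\xi-\mu_{s,r}y)\varphi_{s,r}(\xi,y)$. Setting $u=\xi-\mu_{s,r}r^{-2H}y$ and then $t=r^{H}\rho^{-1}_{s,r}u$ turns $|\partial_x\varphi_{s,r}(\xi,y)|$ into $\frac{r^{H}}{2\pi\rho^2_{s,r}}|t|e^{-t^2/2}e^{-y^2/(2r^{2H})}$, and since $\sup_{t\in\mathbb{R}}|t|e^{-t^2/2}=e^{-1/2}$ with $e^{-1/2}/(2\pi)<1$, one gets $\sup_{\xi}|\partial_x\varphi_{s,r}(\xi,y)|\le r^{H}\rho^{-2}_{s,r}e^{-y^2/(2r^{2H})}$, hence $|\varphi_{s,r}(x,y)-\varphi_{s,r}(z,y)|\le r^{H}\rho^{-2}_{s,r}|x-z|e^{-y^2/(2r^{2H})}=:B$. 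For general $\beta\in[0,1]$ we then have $|\varphi_{s,r}(x,y)-\varphi_{s,r}(z,y)|\le\min(A,B)\le A^{1-\beta}B^{\beta}$, and a direct computation shows $A^{1-\beta}B^{\beta}$ equals the right-hand side of \eqref{lem3.0-eq1}.

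There is no real obstacle here; the lemma is essentially a computation. The only points needing a little care are carrying out the completion of the square so that the weight $e^{-y^2/(2r^{2H})}$ appears exactly rather than up to a constant, and noting the harmless numerical inequality $e^{-1/2}/(2\pi)\le1$, which lets the $\beta=1$ bound be written with constant $1$; the interpolation step relies only on the elementary fact that $\min(A,B)\le A^{1-\beta}B^{\beta}$ for $A,B\ge0$ and $\beta\in[0,1]$.
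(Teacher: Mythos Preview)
Your proof is correct and follows essentially the same approach as the paper: both arguments complete the square to factor out $e^{-y^2/(2r^{2H})}$, apply the mean value theorem together with the elementary bound $|u|e^{-u^2/2}\le 1$, and then interpolate between the trivial ($\beta=0$) and Lipschitz ($\beta=1$) estimates. The only cosmetic difference is that the paper phrases the interpolation as $|d|\le |d|^\beta$ for $|d|\le 1$ applied to the exponential difference, whereas you phrase it as $\min(A,B)\le A^{1-\beta}B^\beta$; these are equivalent here.
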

\begin{proof}
We have
\begin{align*}
\Bigl|&e^{-\frac{1}{2\rho^2_{s,r}}\left( r^{2H}x^2-2\mu_{s,r}
xy+s^{2H}y^2\right)}-e^{
-\frac{1}{2\rho^2_{s,r}}\left( r^{2H}z^2-2\mu_{s,r}
zy+s^{2H}y^2\right)}\Bigr|\\
&\qquad\qquad\leq \Bigl|e^{-\frac{1}{2\rho^2_{s,r}}\left( r^{2H}x^2-2\mu_{s,r}
xy+s^{2H}y^2\right)}-e^{
-\frac{1}{2\rho^2_{s,r}}\left( r^{2H}z^2-2\mu_{s,r}
zy+s^{2H}y^2\right)}\Bigr|^\beta
\end{align*}
for all $\beta\in [0,1]$. It follows from Mean Value Theorem that
\begin{align*}
|\varphi_{s,r}(x,y)&-\varphi_{s,r}(z,y)|\\
&\leq \frac{1}{2\pi\rho_{s,r}}\Bigl|e^{-\frac{1}{2\rho^2_{s,r}}\left( r^{2H}x^2-2\mu_{s,r}xy+s^{2H}y^2\right)}-e^{
-\frac{1}{2\rho^2_{s,r}}\left( r^{2H}z^2-2\mu_{s,r}
zy+s^{2H}y^2\right)}\Bigr|^\beta\\
&=\frac{1}{2\pi\rho_{s,r}}\Bigl|(x-z)(-\frac{1}{\rho^2_{s,r}}) (r^{2H}\xi-\mu_{s,r} y)e^{-\frac{1}{2\rho^2_{s,r}}\left(r^{2H}\xi^2-2\mu_{s,r}\xi y+s^{2H}y^2\right)}\Bigr|^\beta\\
&=\frac{1}{2\pi\rho_{s,r}}\Bigl|(x-z)\frac{r^{2H}}{\rho^2_{s,r}} (\xi-\frac{\mu_{s,r}}{r^{2H}}y)e^{-\frac{r^{2H}}{2\rho^2_{s,r}}( \xi-\frac{\mu_{s,r}}{r^{2H}}y)^2}e^{-\frac{y^2}{2r^{2H}}} \Bigr|^\beta
\end{align*}
for some $\xi$ between $z$ and $x$. Combining this with the fact $|x|e^{-x^2}\leq 1$, we get
\begin{align*}
|\varphi_{s,r}(x,y)&-\varphi_{s,r}(z,y)|\leq \frac{r^{\beta H}}{\rho_{s,r}^{1+\beta}}|x-z|^\beta e^{-\frac{\beta }{2r^{2H}}y^2}
\end{align*}
for all $\beta\in [0,1]$. Similarly, one can obtain the estimate~\eqref{lem3.0-eq2}.
\end{proof}

\begin{lemma}\label{lem3.1}
The estimate
\begin{align*}
\Lambda_1(s,r,a,b):=\int_a^{\infty}\int_b^\infty
\frac{|\Psi_{s,r,a,b}(x,y)|}{(x-a)(y-b)}dxdy\leq
\frac{C_{H,T,\beta} s^{\beta H/2}}{r^{(1+\beta)H}(s-r)^{(1+\beta)H}}
\end{align*}
holds for all $\beta\in (0,1),0<r<s\leq T$ and $a,b\in {\mathbb R}$.
\end{lemma}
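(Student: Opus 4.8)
The plan is to exploit the very definition of $\Psi_{s,r,a,b}$, which was designed precisely so that the four Gaussian terms cancel the singularities of $1/(x-a)$ at $x=a$ and of $1/(y-b)$ at $y=b$. First I would split the region of integration $[a,\infty)\times[b,\infty)$ into the four rectangles determined by the cut-offs in the $\theta$-factors, namely $\{a<x\le a+1\}$ versus $\{x>a+1\}$, and similarly in $y$ around $b+1$; equivalently one records that on $\{x\le a+1\}$ one has $\theta(1+a-x)=1$ and on $\{x>a+1\}$ it vanishes, so on each piece $\Psi_{s,r,a,b}$ reduces to a combination of at most four of the terms $\varphi_{s,r}(x,y),\varphi_{s,r}(x,b),\varphi_{s,r}(a,y),\varphi_{s,r}(a,b)$ with the same signs as in the definition. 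On the interior rectangle $\{a<x\le a+1,\ b<y\le b+1\}$ all four terms survive and we rewrite $\Psi_{s,r,a,b}(x,y)=\bigl(\varphi_{s,r}(x,y)-\varphi_{s,r}(a,y)\bigr)-\bigl(\varphi_{s,r}(x,b)-\varphi_{s,r}(a,b)\bigr)$; applying the Hölder-type estimate~\eqref{lem3.0-eq1} of Lemma~\ref{lem3.0} to each difference with exponent $\beta$ gives $|\Psi_{s,r,a,b}(x,y)|\le 2\,\dfrac{r^{\beta H}}{\rho_{s,r}^{1+\beta}}\,|x-a|^{\beta}$, whence after dividing by $(x-a)(y-b)$ the factor $|x-a|^{\beta-1}$ is integrable over $(a,a+1]$ and the remaining $1/(y-b)$ is tamed by pairing it with the exponential Gaussian decay produced by the same lemma (or, on this bounded $y$-strip, by applying~\eqref{lem3.0-eq2} once more to extract an extra $|y-b|^{\beta}$).

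The three outer rectangles are genuinely easier: on $\{x>a+1\}$ we have $\theta(1+a-x)=0$, so $\Psi_{s,r,a,b}(x,y)=\varphi_{s,r}(x,y)-\varphi_{s,r}(x,b)\theta(1+b-y)$, and the denominator $1/(x-a)$ is bounded by $1$ there; one bounds $\varphi_{s,r}$ and its $y$-translate crudely by the Gaussian tail estimate (again via Lemma~\ref{lem3.0} or directly from~\eqref{sec3-eq3.2000}) and integrates. In every case the $\rho_{s,r}$ denominators are converted into the stated $r$ and $s-r$ factors by Lemma~\ref{lem2.10}: since $\rho_{s,r}^2=(rs)^{2H}-\mu_{s,r}^2$ lies between constant multiples of $r^{2H}(s-r)^{2H}$, we get $\rho_{s,r}^{-(1+\beta)}\le C_H\,r^{-(1+\beta)H}(s-r)^{-(1+\beta)H}$, which is exactly the dependence appearing on the right-hand side of the claim (the extra $s^{\beta H/2}$ arising from the Gaussian factor $e^{-\beta y^2/(2r^{2H})}$ after integrating in $y$, together with $s\le T$ to absorb lower-order powers into $C_{H,T,\beta}$).

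The main obstacle is the bookkeeping on the interior rectangle, where one must be careful to pair the singularity in $x$ with the Hölder modulus in $x$ \emph{and simultaneously} the singularity in $y$ with a modulus in $y$, using Lemma~\ref{lem3.0} in both slots without double-counting the $\rho_{s,r}^{-(1+\beta)}$ factor or losing the Gaussian that supplies convergence of the $y$-integral; writing $\Psi$ as a difference of differences in the correct order (first in $x$, then in $y$, or a symmetric average of the two) is what makes this work. Everything else is a routine application of the one-variable bound $\int_0^1 u^{\beta-1}du=\beta^{-1}<\infty$ together with $\int_{\mathbb R}e^{-cy^2}\,|y|^{\beta}\,dy<\infty$ and the comparison inequalities of Lemmas~\ref{lem2.10} and~\ref{lem2.20}, and collecting all constants into $C_{H,T,\beta}$.
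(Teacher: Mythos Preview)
Your four-rectangle decomposition and the treatment of the three outer rectangles match the paper's argument, and your use of Lemma~\ref{lem2.10} to convert $\rho_{s,r}^{-(1+\beta)}$ into $r^{-(1+\beta)H}(s-r)^{-(1+\beta)H}$ is exactly right. The gap is on the interior rectangle $\{a<x\le a+1\}\times\{b<y\le b+1\}$.

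There you first bound $|\Psi_{s,r,a,b}(x,y)|\le 2\,r^{\beta H}\rho_{s,r}^{-(1+\beta)}|x-a|^{\beta}$ and then say that the remaining $1/(y-b)$ is ``tamed by pairing it with the exponential Gaussian decay produced by the same lemma.'' That is false: the Gaussian factor $e^{-\beta y^2/(2r^{2H})}$ coming from Lemma~\ref{lem3.0} decays only as $|y|\to\infty$ and does nothing against the non-integrable singularity of $1/(y-b)$ at $y=b$. Your parenthetical alternative---``apply~\eqref{lem3.0-eq2} once more to extract an extra $|y-b|^{\beta}$''---is the right instinct, but as stated it would double the $\rho_{s,r}^{-(1+\beta)}$ factor (exactly the ``double-counting'' you warn against later) and does not yield a usable bound.

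The clean resolution, which is what the paper does, is to observe that Lemma~\ref{lem3.0} gives \emph{two separate} bounds on the same second difference,
\[
|\Psi|\le 2\,\frac{s^{\beta H}}{\rho_{s,r}^{1+\beta}}\,|y-b|^{\beta}
\qquad\text{and}\qquad
|\Psi|\le 2\,\frac{r^{\beta H}}{\rho_{s,r}^{1+\beta}}\,|x-a|^{\beta},
\]
and then use $|\Psi|\le\min(A,B)\le\sqrt{AB}$ to obtain the product estimate
\[
|\Psi_{s,r,a,b}(x,y)|\le 2\,\frac{(sr)^{\beta H/2}}{\rho_{s,r}^{1+\beta}}\,\bigl|(x-a)(y-b)\bigr|^{\beta/2},
\]
with a \emph{single} factor $\rho_{s,r}^{-(1+\beta)}$. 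Dividing by $(x-a)(y-b)$ now leaves $(x-a)^{\beta/2-1}(y-b)^{\beta/2-1}$, which is integrable on the unit square since $\beta/2-1>-1$. Note also that the factor $s^{\beta H/2}$ in the final bound comes from this $(sr)^{\beta H/2}$ (the $r^{\beta H/2}$ is absorbed into $r^{-(1+\beta)H}$ up to a harmless improvement), not from integrating a Gaussian in $y$ as you suggest; Lemma~\ref{lem2.20} is not needed here.
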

\begin{proof}
We have
\begin{align}\notag
\Lambda_1(s,r,a,b)&=\int_a^\infty\int_b^\infty \frac{1}{(x-a)(y-b)}
|\Psi_{s,r,a,b}(x,y)|dxdy\\  \label{eq3-2020110}
&\hspace{-1cm}\leq \int_a^{a+1}dx\int_b^{b+1}\frac{1}{(x-a)(y-b)}| \varphi_{s,r}(x,y)
-\varphi_{s,r}(x,b)-\varphi_{s,r}(a,y) +\varphi_{s,r}(a,b)|dy\\ \notag
&\qquad+\int_{a+1}^\infty dx\int_b^{b+1}\frac{1}{(x-a)(y-b)}| \varphi_{s,r}(x,y)-\varphi_{s,r}(x,b)|dy\\ \notag
&\qquad+\int_a^{a+1}dx\int_{b+1}^\infty \frac{1}{(x-a)(y-b)}|\varphi_{s,r}(x,y)
-\varphi_{s,r}(a,y)|dy\\  \notag
&\qquad+\int_{a+1}^\infty dx\int_{b+1}^\infty \frac{1}{(x-a)(y-b)}\varphi_{s,r}(x,y)dy\\ \notag
&\hspace{-1cm}\equiv \Lambda_{11}(s,r,a,b)+\Lambda_{12}(s,r,a,b)
+\Lambda_{13}(s,r,a,b)+\Lambda_{14}(s,r,a,b).
\end{align}
Clearly, $\Lambda_{14}(s,r,a,b)\leq 1$ and we have
\begin{align*}
\Lambda_{12}(s,r,a,b)+\Lambda_{13}(s,r,a,b)&\leq 2\frac{s^{\beta H}}{\rho^{1+\beta}_{s,r}}\int_{a+1}^\infty dx\int_b^{b+1}\frac{1}{ (x-a)(y-b)^{1-\beta}} e^{-\frac{\beta}{2s^{2H}}x^2}dy\\
&\qquad+2\frac{r^{\beta H}}{\rho^{1+\beta}_{s,r}}\int_a^{a+1}dx
\int_{b+1}^\infty \frac{1}{(x-a)^{1-\beta}(y-b)}
e^{-\frac{\beta}{2r^{2H}}y^2}dy\\
&\leq \frac{C_{H,\beta}s^{(1+\beta)H}}{r^{(1+\beta)H}(s-r)^{(1+\beta)H}}
\end{align*}
and $a,b\in {\mathbb R}$ by Lemma~\ref{lem3.0} with $\beta\in (0,1)$ and Lemma~\ref{lem2.10}. In order to estimate $\Lambda_{11}(s,r,a,b)$, by Lemma~\ref{lem3.0} we have
\begin{align}\label{lem3.1-eq1-111}
|\varphi_{s,r}(x,y)
&-\varphi_{s,r}(x,b)-\varphi_{s,r}(a,y) +\varphi_{s,r}(a,b)| \leq 2\frac{s^{\beta H}}{\rho_{s,r}^{1+\beta}}|y-b|^\beta
\end{align}
and
\begin{equation}\label{sec3-eq3.8111}
\begin{split}
|\varphi_{s,r}(x,y)
&-\varphi_{s,r}(x,b)-\varphi_{s,r}(a,y) +\varphi_{s,r}(a,b)| \leq 2\frac{r^{\beta H}}{\rho_{s,r}^{1+\beta}}|x-a|^\beta
\end{split}
\end{equation}
for all $a,b,x,y\in {\mathbb R}$, which give
\begin{equation}\label{lem3.1-eq1}
|\varphi_{s,r}(x,y)
-\varphi_{s,r}(x,b)-\varphi_{s,r}(a,y) +\varphi_{s,r}(a,b)|\leq 2\frac{(sr)^{\beta H/2}}{\rho_{s,r}^{1+\beta}}|(x-a)(y-b)|^{\beta/2}.
\end{equation}
It follows from Lemma~\ref{lem2.10} that
\begin{align*}
\Lambda_{11}(s,r,a,b)&=\int_a^{a+1}\!\!dx\int_b^{b+1}\!\!\frac{dy}{(x-a)(y-b)} |\varphi_{s,r}(x,y)-\varphi_{s,r}(x,b)-\varphi_{s,r}(a,y) +\varphi_{s,r}(a,b)|\\
&\leq 2\frac{(sr)^{\beta H/2}}{\rho_{s,r}^{1+\beta}}\int_a^{a+1}dx\int_b^{b+1} \frac{1}{(x-a)^{1-\frac\beta2} (y-b)^{1-\frac\beta2} }dy\\
&\leq C_{H,\beta}\frac{s^{\beta H/2}}{r^{(2+\beta)H/2}(s-r)^{(1+\beta)H}}
\end{align*}
for all $\beta\in (0,1)$ and $a,b\in {\mathbb R}$. This completes the proof.
\end{proof}

The next proposition shows the process
\begin{equation}\label{sec4-eq4.1}
{\mathcal C}^{+,H}_t(a):=2\left( F_{+}(B^H_t-a)-F_{+}(-a)-\int_0^tF_{+}'(B^H_s-a)
\delta B^H_s\right)
\end{equation}
exists in $L^2(\Omega)$, where
\begin{equation}\label{sec3-eq3.4}
F_{+}(x)=
\begin{cases}
0,& {\text {if $x\leq 0$}},\\
x\log{x}-x,& {\text {if $x>0$}}.
\end{cases}
\end{equation}

\begin{proposition}\label{lem3.2}
Let the function $F_{+}$ be given as above. Then the random variable
\begin{equation}\label{sec3-eq3.5}
\int_0^tF_{+}'(B^H_s-a)\delta B^H_s
\end{equation}
exists and
\begin{equation}\label{sec3-eq3.802}
\begin{split}
E\left|\int_0^tF_{+}'(B^H_s-a)\delta B^H_s\right|^2&
=\int_0^t\int_0^tE[F'_{+}(B^H_s-a)F'_{+}(B^H_r-a)]\phi(s,r)dsdr\\
&\hspace{-2.5cm}+\int_0^tds\int_0^sd\xi\int_0^tdr\int_0^rd\eta
\phi(s,\eta)\phi(r,\xi)\int_a^{\infty}\int_a^\infty
\frac{\Psi_{s,r,a,a}(x,y)dxdy}{(x-a)(y-a)}
\end{split}
\end{equation}
for all $t\geq 0$ and $a\in {\mathbb R}$.
\end{proposition}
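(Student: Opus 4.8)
The plan is to establish~\eqref{sec3-eq3.802} --- and with it the existence of the Skorohod integral~\eqref{sec3-eq3.5}, hence of the process~\eqref{sec4-eq4.1} --- by smooth approximation, using the second--moment formula~\eqref{sec2-eq2.100}, Lemma~\ref{lem3.-1}, Lemma~\ref{lem3.1} and Proposition~\ref{prop2.1}. First I would fix a sequence $G_n\in C^\infty({\mathbb R})$ with compact support such that $G_n(1)=0$, such that $G_n(z)\to F_{+}'(z)$ and $G_n'(z)\to F_{+}''(z)=z^{-1}1_{\{z>0\}}$ for every $z\neq 0$, and such that the $n$--uniform bounds
$$
|G_n(z)|\leq 1+|\log|z||,\qquad |G_n'(z)|\leq \frac{C}{z}\,1_{\{z>0\}}
$$
hold. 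Such $G_n$ are available: let $G_n$ coincide with $\log z$ on an interval $[\rho_n,R_n]$ with $\rho_n\downarrow 0$, $R_n\uparrow\infty$, $1\in[\rho_n,R_n]$, vanish off $(\rho_n^2,R_n^2)$, and interpolate smoothly on $[\rho_n^2,\rho_n]$ and $[R_n,R_n^2]$; since $\int_{\rho_n^2}^{\rho_n}z^{-1}dz=|\log\rho_n|$ (and likewise $\int_{R_n}^{R_n^2}z^{-1}dz=\log R_n$ at the right end), these interpolations can be carried out with a slope bound $|G_n'(z)|\leq C/z$. With $u^n_s:=G_n(B^H_s-a)1_{[0,t]}(s)$ and $u_s:=F_{+}'(B^H_s-a)1_{[0,t]}(s)$, one has $u^n\in{\mathbb D}^{1,2}({\mathcal H})$ with $D^H_\xi u^n_r=G_n'(B^H_r-a)1_{[0,r]}(\xi)1_{[0,t]}(r)$, so that~\eqref{sec2-eq2.100} applies to $\delta^H(u^n)$.

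I would then compute $E[\delta^H(u^n)^2]$ from~\eqref{sec2-eq2.100}; integrating out the indicators gives
\begin{align*}
E[\delta^H(u^n)^2]&=\int_0^t\!\!\int_0^tE[G_n(B^H_s-a)G_n(B^H_r-a)]\phi(s,r)\,ds\,dr\\
&\quad+\int_0^t\!\!ds\int_0^s\!\!d\xi\int_0^t\!\!dr\int_0^r\!\!d\eta\;\phi(s,\eta)\phi(r,\xi)\,I_n(s,r),
\end{align*}
where $I_n(s,r):=E[G_n'(B^H_s-a)G_n'(B^H_r-a)]$; by Lemma~\ref{lem3.-1}, which applies because $G_n\in C^\infty$ has compact support and $G_n(1)=0$,
$$
I_n(s,r)=\int_{{\mathbb R}^2}G_n'(x-a)G_n'(y-a)\Psi_{s,r,a,a}(x,y)\,dx\,dy.
$$

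Next I would let $n\to\infty$. For the first term, $|G_n(z)|\leq 1+|\log|z||$ and the Cauchy--Schwarz inequality bound its integrand by $C(1+|\log s|)(1+|\log r|)\phi(s,r)$, which is integrable over $[0,t]^2$ because $H>\tfrac12$ (one uses $E[(\log|B^H_s-a|)^2]\leq C(1+(\log s)^2)$); dominated convergence then gives $u^n\to u$ in $L^2(\Omega;{\mathcal H})$ and convergence of the first term to the first term of~\eqref{sec3-eq3.802}. For $I_n$, the bound $|G_n'(x-a)G_n'(y-a)|\leq C^2(x-a)^{-1}(y-a)^{-1}1_{\{x>a,\,y>a\}}$ together with Lemma~\ref{lem3.1} dominates the integrand of $I_n(s,r)$ by $C^2(x-a)^{-1}(y-a)^{-1}1_{\{x>a,\,y>a\}}|\Psi_{s,r,a,a}(x,y)|$, whose integral over ${\mathbb R}^2$ is $\leq C^2\Lambda_1(s,r,a,a)<\infty$; hence $I_n(s,r)\to\int_a^\infty\int_a^\infty\Psi_{s,r,a,a}(x,y)[(x-a)(y-a)]^{-1}\,dx\,dy$ for each $s,r$, with $|I_n(s,r)|\leq C^2\Lambda_1(s,r,a,a)$. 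Carrying out the $d\xi,d\eta$ integrations (which contribute factors $\leq C_Hs^{2H-1}$ and $\leq C_H(r^{2H-1}+(s-r)^{2H-1})$ for $r<s$) and inserting the bound $\Lambda_1(s,r,a,a)\leq C_{H,T,\beta}\,s^{\beta H/2}r^{-(1+\beta)H}(s-r)^{-(1+\beta)H}$ of Lemma~\ref{lem3.1}, one checks that the integrand of the second term is dominated by an integrable function of $(s,r)$ on $\{0<r<s\leq t\}$ once $\beta\in(0,\tfrac1H-1)$ is fixed (a Beta integral in $r$ followed by a power integral in $s$), so dominated convergence sends the second term to the second term of~\eqref{sec3-eq3.802}. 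Running the same estimates on the bilinear (polarised) version of~\eqref{sec2-eq2.100} shows $E[(\delta^H(u^n)-\delta^H(u^m))^2]\to 0$, i.e.\ $\{\delta^H(u^n)\}$ is Cauchy in $L^2(\Omega)$; since moreover $u^n\to u$ in $L^2(\Omega;{\mathcal H})$, Proposition~\ref{prop2.1} gives $u\in{\rm Dom}(\delta^H)$ and $\int_0^tF_{+}'(B^H_s-a)\delta B^H_s=\delta^H(u)=\lim_n\delta^H(u^n)$, and passing to the limit in $E[\delta^H(u^n)^2]$ yields~\eqref{sec3-eq3.802}, whose right--hand side is finite by the estimates above.

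The delicate point is the convergence of the $I_n$ term. The naive candidate $F_{+}''(B^H_s-a)=(B^H_s-a)^{-1}1_{\{B^H_s>a\}}$ is not in $L^1(\Omega)$, so $u\notin{\mathbb D}^{1,2}({\mathcal H})$ and~\eqref{sec2-eq2.100} cannot be applied to $u$ directly; one must instead use that the regularised kernel $\Psi_{s,r,a,a}(x,y)/((x-a)(y-a))$ is absolutely integrable near the corner $(a,a)$ --- precisely the content of Lemma~\ref{lem3.1}, which rests on $\Psi_{s,r,a,a}$ vanishing on the lines $x=a$ and $y=a$ --- and that the power $(s-r)^{-(1+\beta)H}$ in that estimate is still integrable against the remaining four variables precisely because $(1+\beta)H<1$ can be arranged for some $\beta>0$ when $H<1$. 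Since all the constants above are uniform in $a$, the conclusion holds for every $a\in{\mathbb R}$ and $t\geq 0$.
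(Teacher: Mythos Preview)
Your approach is correct and follows the same overall strategy as the paper: approximate $F'_{+}$ by smooth compactly supported $G_n$, apply the second--moment formula~\eqref{sec2-eq2.100} together with Lemma~\ref{lem3.-1} to rewrite the derivative term via $\Psi_{s,r,a,a}$, dominate the resulting kernel by Lemma~\ref{lem3.1}, show the sequence $\delta^H(u^n)$ is Cauchy, and conclude via Proposition~\ref{prop2.1}.

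The difference lies in the choice of approximants. The paper takes the standard mollification $G_n=F'_{+}\ast\zeta_n$ (see~\eqref{eq3-2020202}), which is convenient but has two costs: it only yields $|G'_n(x)|\leq Cx^{-1}(1+|\log x|)$ rather than $Cx^{-1}$, and it does not give $G_n(1)=0$ (only $G_n(1)\to F'_{+}(1)=0$). Consequently the paper must use the full identity~\eqref{sec3-eq3.5011} with its boundary terms, and needs two additional auxiliary results --- Lemma~\ref{lem3.301-3} (a version of Lemma~\ref{lem3.1} with the extra logarithmic weight) and Lemma~\ref{lem3.301-4} (to show the boundary terms $\widetilde{\Theta}_n$ are integrable and vanish in the limit). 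Your ad hoc construction, built in logarithmic coordinates so that the transition from $0$ to $\log\rho_n$ occurs over an interval of log--length $|\log\rho_n|$, enforces $G_n(1)=0$ and the clean bound $|G'_n(z)|\leq C/z$; this lets you invoke the simplified identity~\eqref{sec3-eq3.5000} and Lemma~\ref{lem3.1} directly, bypassing Lemmas~\ref{lem3.301-3} and~\ref{lem3.301-4} entirely. The trade--off is that your $G_n$ are less canonical and the claimed bounds (in particular $|G_n(z)|\leq C(1+|\log z|)$ on the transition intervals, with a constant $C>1$ depending on the fixed interpolation profile) require a short verification you only sketch.
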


By smooth approximation we can obtain the statement. Define the function $\zeta$ on ${\mathbb R}$ by
\begin{equation}\label{Ito-formula-1}
\zeta(x):=
\begin{cases}
ce^{\frac1{(x-1)^2-1}}, &{\text {$x\in (0,2)$}},\\
0, &{\text {otherwise}},
\end{cases}
\end{equation}
where $c$ is a normalizing constant such that $\int_{\mathbb
R}\zeta(x)dx=1$. Define the mollifiers
\begin{equation}\label{Ito-formula-101}
\zeta_n(x):=n\zeta(nx),\qquad n=2,\ldots
\end{equation}
and the sequence of smooth functions
\begin{align}\label{eq3-2020202}
G_{n}(x):&=\int_{\mathbb
R}F'_{+}(y)\zeta_n(x-y)dy=n\int_{x-\frac{2}n}^x
F'_{+}(y)\zeta(n(x-y))dy\\  \label{eq3-2020203}
&=\int_0^2F'_{+}(x-\frac{y}n)\zeta(y)dy,\qquad n=2,\ldots.
\end{align}
Then $G_{n}\in C^\infty({\mathbb R})$ with compact support for all $n\geq 2$.

\begin{lemma}\label{lem3.301}
Let the functions $G_n,n\geq 2$ be defined as above. Then we have
$$
|G_{n}(x)|\leq \psi_1(x):=
\begin{cases}
0, & {\text { if $x\leq 0$}},\\
C(1+|\log x|), & {\text { if $x>0$}}
\end{cases}
$$
for all $x\in {\mathbb R}$, and
\begin{equation}\label{eq3-20001}
G_{n}(x)\longrightarrow F'_{+}(x)
\end{equation}
for all $x\neq 0$, as $n$ tends to infinity.
\end{lemma}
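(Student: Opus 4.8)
The argument is elementary and works directly from the representation
$$
G_n(x)=\int_0^2 F_{+}'\!\left(x-\tfrac{y}{n}\right)\zeta(y)\,dy,\qquad n\ge2,
$$
of~\eqref{eq3-2020203}, using $F_{+}'(u)=(\log u)\,1_{\{u>0\}}$ together with the facts that $\zeta\ge0$ is bounded, $\int_{\mathbb R}\zeta=1$, and $\zeta$ has support in $(0,2)$. If $x\le0$ then $x-\tfrac yn<0$ for every $y\in(0,2)$, so the integrand vanishes and $G_n(x)=0=\psi_1(x)$; in particular~\eqref{eq3-20001} holds trivially at such $x$. So it remains to treat $x>0$, where $\psi_1(x)=C(1+|\log x|)$.

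For the bound when $x>0$ I would split on the size of $x$ relative to $\tfrac4n$. If $x\ge\tfrac4n$, then $\tfrac x2\le x-\tfrac yn\le x$ for all $y\in(0,2)$, hence $F_{+}'(x-\tfrac yn)=\log(x-\tfrac yn)$ with $|\log(x-\tfrac yn)|\le|\log x|+\log2$, and integrating against $\zeta$ gives $|G_n(x)|\le|\log x|+\log2$. If $0<x<\tfrac4n$, I would bound $\zeta$ by $\|\zeta\|_\infty$, note that $x-\tfrac yn>0$ precisely for $y<nx$, and change variables $z=x-\tfrac yn$ to obtain
$$
|G_n(x)|\le\|\zeta\|_\infty\,n\int_{\max(x-2/n,\,0)}^{x}|\log z|\,dz\le\|\zeta\|_\infty\,n\int_0^x|\log z|\,dz;
$$
since $\int_0^x|\log z|\,dz=x(1-\log x)=x(1+|\log x|)$ for $0<x\le1$ and $nx<4$, this is $\le4\|\zeta\|_\infty(1+|\log x|)$, while the finitely many remaining pairs (those with $1<x<\tfrac4n$, which force $n\in\{2,3\}$) are absorbed by the crude estimate $|G_n(x)|\le\|\zeta\|_\infty\,n\int_0^2|\log z|\,dz$. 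Taking $C$ to be the largest of the constants so produced yields $|G_n|\le\psi_1$ on $\mathbb R$.

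For the convergence~\eqref{eq3-20001} at a fixed $x>0$, I would take $n>\tfrac4x$, so that $\tfrac x2\le x-\tfrac yn\le x$ for all $y\in(0,2)$; then $F_{+}'(x-\tfrac yn)=\log(x-\tfrac yn)\to\log x$ uniformly in $y\in(0,2)$ as $n\to\infty$, whence $G_n(x)=\int_0^2\log(x-\tfrac yn)\zeta(y)\,dy\to\log x=F_{+}'(x)$. (Alternatively one may invoke dominated convergence, with dominating function $\big(\sup_{t\in[x/2,x]}|\log t|\big)\,\zeta$.)

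The steps are all routine calculus; the one place needing a little attention is the uniform-in-$n$ bound in the regime $0<x<\tfrac4n$, where one must see that, after the rescaling $z=x-\tfrac yn$, the singularity of $\log$ at the origin is integrable at exactly the rate that regenerates $1+|\log x|$ — this is what the identity $\int_0^x|\log z|\,dz=x(1+|\log x|)$ (valid for $0<x\le1$) combined with $nx<4$ delivers.
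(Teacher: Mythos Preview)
Your proof is correct and follows essentially the same strategy as the paper's: both arguments observe that $G_n(x)=0$ for $x\le0$, split the regime $x>0$ into a ``small-$x$'' zone (you use $0<x<4/n$, the paper uses $0<x\le2/n$) where the bound comes from $\int_0^x|\log z|\,dz=x(1-\log x)$ combined with $nx=O(1)$, and a ``large-$x$'' zone where one reads off the bound directly from $\log(x-\tfrac yn)$ being close to $\log x$. Your treatment of the large-$x$ case via the sandwich $x/2\le x-\tfrac yn\le x$ is slightly cleaner than the paper's expansion $G_n(x)=\log x+\int_0^2\zeta(z)\log(1-\tfrac z{nx})\,dz$, though the latter also yields the convergence estimate in one stroke; the two are interchangeable.
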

\begin{proof}
Clearly, $G_{n}(x)=0$ for $x\leq 0$ and
\begin{equation}\label{eq3-0012}
G_{n}(x)=n\int_{(x-\frac{2}n)\vee 0}^x\zeta(n(x-y))\log ydy
\end{equation}
for $x>0$.

When $0<x\leq \frac2n$, we have
\begin{align*}
|G_{n}(x)|&\leq n\int_0^x\zeta(n(x-y))|\log y|dy\leq -n\int_0^x\log ydy\\
&=nx(1-\log x)\leq 2(1-\log x).
\end{align*}
On the other hand, by~\eqref{eq3-2020203} we get
\begin{align*}
G_{n}(x)&=n\int_0^2\zeta(z)\log(x-\frac{z}{n})dy =\int_0^{2}\zeta(z)\log[x(1-\frac{z}{nx})]dz\\
&=\log x\int_0^2\zeta(z)dz+\int_0^2\zeta(z)\log(1-\frac{z}{nx})dz\\
&=\log x+\int_0^2\zeta(z)\log(1-\frac{z}{nx})dz
\end{align*}
for $x>\frac{2}n$, which gives
\begin{align*}
|G_{n}(x)|\leq C\left(1+|\log x| \right)
\end{align*}
for $x>\frac{2}n$ since
\begin{align*}
\int_0^2|\log(1-\frac{z}{u})|e^{-\frac1{1-(1-z)^2}}dz<\infty
\end{align*}
with $u>2$.

Finally, the convergence~\eqref{eq3-20001} follows from $G_{n}(x)=F'_\alpha(x)=0$ for $x<0$ and the next estimate:
\begin{align*}
|G_{n}(x)-F'_{+}(x)|&\leq  \alpha^{-1}\int_0^2\zeta(y)\left|F'_{+}(x-\frac{y}n)-F'_{+}(x)\right|dy\\
&=\alpha^{-1}\int_0^2\left|\log(x-\frac{y}n)-\log x\right|\zeta(y)dy\\
&\leq \alpha^{-1}\int_0^2 \log\left(1+\frac{y}{nx-y}\right)\zeta(y)dy\\
&\leq \frac{1}{\alpha}\log\left(1+\frac{2}{nx-2}\right)\int_0^2\zeta(y)dy =\frac{1}{\alpha}\log\left(1+\frac{2}{nx-2}\right)
\end{align*}
for all $x>\frac2n$. This completes the proof.
\end{proof}

\begin{lemma}\label{lem3.301-1}
Let the functions $G_n,n\geq 2$ be defined as above. Then we have
$$
|G'_{n}(x)|\leq \psi_2(x):=
\begin{cases}
0, & {\text { if $x\leq 0$}},\\
Cx^{-1}(1+|\log x|), & {\text { if $x>0$}}
\end{cases}
$$
for any $x\in {\mathbb R}$, and
\begin{equation}\label{eq3-20002}
G'_{n}(x)\longrightarrow F''_{+}(x)
\end{equation}
for all $x\neq 0$, as $n$ tends to infinity.
\end{lemma}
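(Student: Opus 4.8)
The plan is to produce closed forms for $G'_n$ on each of the three ranges $x\le 0$, $0<x\le\frac2n$, $x>\frac2n$ by differentiating the expressions for $G_n$ already used in the proof of Lemma~\ref{lem3.301}, and then to read off both the pointwise estimate and the limit from these. Throughout I use that, away from the origin, $F''_+(x)=x^{-1}$ for $x>0$ and $F''_+(x)=0$ for $x<0$, so the asserted limit is itself dominated by $\psi_2$. The range $x\le 0$ costs nothing: $F'_+$ vanishes on $(-\infty,0]$ while $\zeta_n$ is supported in $(0,\frac2n)$, so the convolution $G_n$ vanishes identically on $(-\infty,0]$, whence $G'_n(x)=0=F''_+(x)$ for every $x<0$.

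For $0<x\le\frac2n$ I would start from \eqref{eq3-0012}, which in this range reads $G_n(x)=n\int_0^x\zeta(n(x-y))\log y\,dy$, and differentiate under the integral sign (easily justified: the $x$-derivative of the integrand is dominated by $n^2\|\zeta'\|_\infty|\log y|\in L^1$, and the endpoint contribution at $y=x$ is $n\zeta(0)\log x=0$):
\begin{equation*}
G'_n(x)=n^2\int_0^x\zeta'\bigl(n(x-y)\bigr)\log y\,dy .
\end{equation*}
Estimating $|\zeta'|\le\|\zeta'\|_\infty$ and using $\int_0^x|\log y|\,dy=x(1-\log x)$ (valid as $x\le\frac2n\le1$) together with $nx\le2$, so that $n^2x=n\cdot nx\le2n\le4x^{-1}$, one gets $|G'_n(x)|\le 4\|\zeta'\|_\infty\,x^{-1}(1+|\log x|)$. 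For $x>\frac2n$ I would instead observe that $x-\frac yn\ge x-\frac2n>0$ for all $y\in[0,2]$, so \eqref{eq3-2020203} gives $G_n(x)=\int_0^2\log(x-\frac yn)\zeta(y)\,dy$, and differentiation under the integral (now routine, since $(x-\frac yn)^{-1}$ is bounded for $x$ in compact subsets of $(\frac2n,\infty)$) yields
\begin{equation*}
G'_n(x)=\int_0^2\frac{\zeta(y)}{x-\frac yn}\,dy .
\end{equation*}
I would split this last formula once more: if $x\ge\frac4n$ then $x-\frac yn\ge\frac x2$, giving $0\le G'_n(x)\le 2x^{-1}$; if $\frac2n<x<\frac4n$, setting $\lambda:=nx\in(2,4)$ turns the integral into $n\int_0^2\zeta(y)(\lambda-y)^{-1}dy\le n\int_0^2\zeta(y)(2-y)^{-1}dy=:nM$, and since $n=\lambda/x<4/x$ this is $\le 4Mx^{-1}$; here $M<\infty$ because $\zeta(y)=ce^{1/(y(y-2))}$ decays faster than any power of $2-y$ as $y\uparrow2$. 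Collecting the three ranges gives a single constant $C=C(\zeta)$ with $|G'_n(x)|\le\psi_2(x)$ for all $x\in{\mathbb R}$.

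The convergence \eqref{eq3-20002} then follows at once. For $x<0$ it is already contained in the first paragraph. For fixed $x>0$ and every $n>\frac2x$ one has $G'_n(x)=\int_0^2\zeta(y)(x-\frac yn)^{-1}dy$; the integrand tends pointwise to $\zeta(y)/x$, and for $n>\frac4x$ it is dominated by $2\zeta(y)/x\in L^1([0,2])$, so dominated convergence gives $G'_n(x)\to x^{-1}\int_0^2\zeta(y)\,dy=x^{-1}=F''_+(x)$.

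The hard part is the uniform bound in the transitional band $\frac2n<x<\frac4n$, where neither the ``short interval of length $\le\frac2n$'' mechanism of the first regime nor the ``$x-\frac yn$ comparable to $x$'' mechanism of the last regime is available, so that one is forced to use the explicit super-polynomial decay of $\zeta$ at the right endpoint of its support to keep $M$ finite. A secondary nuisance is that in the regime $0<x\le\frac2n$ one cannot simply differentiate $G_n(x)=\int_0^2F'_+(x-\frac yn)\zeta(y)\,dy$ under the integral sign, because there $F''_+(x-\frac yn)=(x-\frac yn)^{-1}$ fails to be integrable in $y$; this is exactly why the representation $G_n(x)=n\int_0^x\zeta(n(x-y))\log y\,dy$ must be used in that range.
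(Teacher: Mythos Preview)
Your proof is correct and follows essentially the same three-range decomposition as the paper. Two minor differences in execution: in the regime $0<x\le\tfrac2n$ you bound $|\zeta'|$ by $\|\zeta'\|_\infty$, which is cleaner than the paper's explicit computation of $\zeta'$ together with the inequality $w^2e^{-w}\le2$; conversely, for $x>\tfrac2n$ the paper handles the entire range in one stroke via $nx-y>2-y$, so that $G'_n(x)=\frac1x\int_0^2\zeta(y)\bigl(1+\tfrac{y}{nx-y}\bigr)dy\le\frac1x\int_0^2\zeta(y)\bigl(1+\tfrac{y}{2-y}\bigr)dy$, making your further split at $x=\tfrac4n$ (and your remark that this is ``the hard part'') unnecessary.
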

\begin{proof}
Clearly, $G'_{n}(x)=0$ for $x\leq 0$, and we have for $x>\frac{2}n$,
\begin{equation}\label{eq3-200021}
\begin{split}
G'_{n}(x)&=\int_0^2 F''_{+}(x-\frac{y}n)\zeta(y)dy= \int_0^2\zeta(y)\frac1{x-\frac{y}n}dy\\
&=\int_0^2\zeta(y)\frac{n}{nx-y} dy=
\frac1x\int_0^2\zeta(y)\left(1+\frac{y}{nx-y}\right)dy
\end{split}
\end{equation}
by~\eqref{eq3-2020202}. It follows that
\begin{align*}
|G'_{n}(x)|&\leq
\frac1x\int_0^2\zeta(y)\left(1+\frac{y}{2-y}\right)dy\leq \frac{C}x
\end{align*}
for $x>\frac{2}n$. On the other hand, for $0<x\leq \frac2n$ we have
\begin{align*}
G'_{n}(x)&=n\int_{\mathbb R}F'_{+}(y)\frac{\partial}{\partial x}\zeta(n(x-y))dy\\
&=-2n^2\int_{x-\frac{2}n}^x\frac{1-n(x-y)}{(1-(1-n(x-y))^2)^2}\zeta(n(x-y)) F'_{+}(y)dy.
\end{align*}
Combining this with the fact
$$
x^2e^{-x}\leq 2\quad (x\geq 0)
$$
lead to
\begin{align*}
|G'_{n}(x)|&\leq 4n^2\int_0^x|F'_{+}(y)||1-n(x-y)|dy\leq 8n^2\int_0^x|F'_{+}(y)|dy\\
&=-8n^2\int_0^x\log ydy=8n^2x(1-\log x)\leq \frac{32}{x}(1-\log x)
\end{align*}
for $0<x\leq \frac2n$, which gives the estimates of $G'_{n}(x)$ with $0<x\leq \frac2n$.

Finally, by the estimate
$$
\int_0^2\zeta(y)\left(1+\frac{y}{nx-y}\right)dy\leq \int_0^2\zeta(y)\left(1+\frac{y}{2-y}\right)dy<\infty
$$
for all $x>\frac2n$ and Lebesgue's dominated convergence theorem we have
$$
\lim_{u\to \infty}\int_0^2\zeta(y)\left(1+\frac{y}{u-y}\right)dy=1.
$$
Combining this with~\eqref{eq3-200021}, we get the convergence~\eqref{eq3-20002} since $G'_{n}(x)=F''_{+}(x)=0$ for all $x<0$. This completes the proof.
\end{proof}

\begin{lemma}\label{lem3.301-3}
Let $\psi_2$ be defined in Lemma~\ref{lem3.301-1}. Then the estimate
\begin{align*}
\int_a^{\infty}\int_b^\infty
\psi_2(x-a)\psi_2(y-b)|\Psi_{s,r,a,b}(x,y)|dxdy\leq
\frac{C_{H,t}s^{\gamma H/2}}{r^{(1+\gamma)H}(s-r)^{(1+\gamma)H}}
\end{align*}
holds for all $\gamma\in (0,1),0<r<s\leq t$ and $a,b\in {\mathbb R}$.
\end{lemma}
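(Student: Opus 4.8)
The plan is to re-run the argument of Lemma~\ref{lem3.1} with the weight $\frac1{(x-a)(y-b)}$ replaced throughout by $\psi_2(x-a)\psi_2(y-b)$, where by Lemma~\ref{lem3.301-1} we have $\psi_2(u)=Cu^{-1}(1+|\log u|)$ for $u>0$ and $\psi_2(u)=0$ for $u\le0$. Writing $\gamma\in(0,1)$ for the H\"older exponent, split the domain into the four rectangles used in~\eqref{eq3-2020110}:
\begin{align*}
\int_a^\infty\int_b^\infty\psi_2(x-a)\psi_2(y-b)|\Psi_{s,r,a,b}(x,y)|dxdy=\Lambda_{11}+\Lambda_{12}+\Lambda_{13}+\Lambda_{14},
\end{align*}
where $\Lambda_{11},\Lambda_{12},\Lambda_{13},\Lambda_{14}$ are the integrals over $(a,a+1]\times(b,b+1]$, $(a+1,\infty)\times(b,b+1]$, $(a,a+1]\times(b+1,\infty)$ and $(a+1,\infty)\times(b+1,\infty)$ respectively, on each of which $\Psi_{s,r,a,b}$ collapses to the same combination of values of $\varphi_{s,r}$ as in the proof of Lemma~\ref{lem3.1}. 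The only new feature is the logarithmic factor in $\psi_2$, and it is controlled by two elementary remarks: (i) $\int_0^1t^{-1+\varepsilon}(1+|\log t|)dt<\infty$ for every $\varepsilon>0$, so a logarithmic weight is absorbed by any strictly positive power gain produced by Lemma~\ref{lem3.0}; and (ii) the map $u\mapsto u^{-1}(1+\log u)$ is decreasing on $[1,\infty)$, so $\psi_2(u)\le C$ for $u\ge1$ and the logarithm is harmless away from the singularity.

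For $\Lambda_{11}$ I would invoke the symmetrized H\"older estimate~\eqref{lem3.1-eq1}, which on $(a,a+1]\times(b,b+1]$ dominates the integrand by
\begin{align*}
C\frac{(sr)^{\gamma H/2}}{\rho_{s,r}^{1+\gamma}}(x-a)^{-1+\gamma/2}(1+|\log(x-a)|)(y-b)^{-1+\gamma/2}(1+|\log(y-b)|).
\end{align*}
By remark~(i) both one-dimensional integrals converge, so $\Lambda_{11}\le C_{H,\gamma}(sr)^{\gamma H/2}\rho_{s,r}^{-(1+\gamma)}$; then Lemma~\ref{lem2.10} (which gives $\rho_{s,r}^{2}\ge c_H r^{2H}(s-r)^{2H}$) together with $r\le s\le t$ turns this into $C_{H,t}s^{\gamma H/2}r^{-(1+\gamma)H}(s-r)^{-(1+\gamma)H}$.

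For $\Lambda_{12}$ and $\Lambda_{13}$ I would use~\eqref{lem3.0-eq2} and~\eqref{lem3.0-eq1}. On $(a+1,\infty)\times(b,b+1]$ one has $\Psi_{s,r,a,b}(x,y)=\varphi_{s,r}(x,y)-\varphi_{s,r}(x,b)$, hence $|\Psi_{s,r,a,b}(x,y)|\le\frac{s^{\gamma H}}{\rho_{s,r}^{1+\gamma}}(y-b)^\gamma e^{-\frac{\gamma}{2s^{2H}}x^2}$; by remark~(i) the integral $\int_b^{b+1}\psi_2(y-b)(y-b)^\gamma dy$ is finite, and by remark~(ii) $\int_{a+1}^\infty\psi_2(x-a)e^{-\frac{\gamma}{2s^{2H}}x^2}dx\le C\int_{\mathbb R}e^{-\frac{\gamma}{2s^{2H}}x^2}dx\le C_\gamma s^{H}$, so $\Lambda_{12}\le C_\gamma s^{(1+\gamma)H}\rho_{s,r}^{-(1+\gamma)}$. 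Symmetrically $\Lambda_{13}\le C_\gamma r^{(1+\gamma)H}\rho_{s,r}^{-(1+\gamma)}$. Lemma~\ref{lem2.10} and $r\le s\le t$ bring both into the stated form, using $s^{(1+\gamma)H}=s^{\gamma H/2}s^{(1+\gamma/2)H}\le t^{(1+\gamma/2)H}s^{\gamma H/2}$.

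Finally, on $(a+1,\infty)\times(b+1,\infty)$ we have $\Psi_{s,r,a,b}=\varphi_{s,r}$ and $\psi_2(x-a)\psi_2(y-b)\le C^2$ by remark~(ii), so $\Lambda_{14}\le C^2\int_{{\mathbb R}^2}\varphi_{s,r}(x,y)dxdy=C^2$; since $r(s-r)\le s^2/4$ and $s\le t$, one checks $C^2\le C_{H,t}s^{\gamma H/2}r^{-(1+\gamma)H}(s-r)^{-(1+\gamma)H}$. Adding the four estimates yields the lemma. I do not expect a genuine obstacle here: the proof is a bookkeeping refinement of Lemma~\ref{lem3.1}, and the only point needing care is that the logarithmic weight $\psi_2$ never destroys integrability, which is exactly what remarks~(i) and~(ii) ensure, together with collecting the powers of $s$, $r$ and $s-r$ into the claimed expression by means of Lemma~\ref{lem2.10}.
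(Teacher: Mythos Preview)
Your proposal is correct and follows essentially the same route as the paper: rerun the four-rectangle decomposition of Lemma~\ref{lem3.1} and check that the extra logarithmic weight in $\psi_2$ never spoils integrability. The only cosmetic difference is that the paper handles the logarithm in one line via the inequality $|\log x|\le C(x^{-\beta}+x^{\beta})$ for $x>0$ and $0<\beta<1$, which immediately reduces each piece to an estimate of the same form as in Lemma~\ref{lem3.1} (with a slightly perturbed exponent), whereas you argue the same point through your remarks~(i) and~(ii); both are equivalent.
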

\begin{proof}
Similar to Lemma~\ref{lem3.1} one can obtain the estimate since
$$
|\log x|\leq C(x^{-\beta}+x^\beta)
$$
for all $x>0$ and all $0<\beta<1$.
\end{proof}

\begin{lemma}\label{lem3.301-4}
Let $\psi_1$ be defined in Lemma~\ref{lem3.301}. Then we have
\begin{align}\label{sec3-eq3.21007}
&\int_{{\mathbb R}}\psi_1(x-a)\left|\frac{\partial}{\partial x}\varphi_{s,r}(x,a)\right|dx\leq C_{H,t,\alpha}(s-r)^{-(1+\alpha)H}r^{-(1+\alpha)H}\\ \label{sec3-eq3.22001007}
&\int_{{\mathbb R}}\psi_1(y-a)\left|\frac{\partial}{\partial y}\varphi_{s,r}(a,y)\right|dy \leq C_{H,t,\alpha}(s-r)^{-(1+\alpha)H}r^{-(1+2\alpha)H}
\end{align}
for all $a\in {\mathbb R}$, $0<r<s\leq t$ and $1-H<\alpha<1$.
\end{lemma}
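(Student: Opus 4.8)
The plan is to follow exactly the route used in Lemmas~\ref{lem3.1} and~\ref{lem3.301-3}: differentiate the Gaussian density~\eqref{sec3-eq3.2000} explicitly, complete the square so as to split off the marginal factor, reduce everything to one-dimensional Gaussian integrals, and finally use the elementary inequality $|\log u|\le C_\alpha(u^{-\alpha}+u^{\alpha})$ for $u>0$ (valid for any $\alpha\in(0,1)$), so that $\psi_1(u)\le C_\alpha(u^{-\alpha}+u^{\alpha})$ for $u>0$, the exponent $\alpha\in(1-H,1)$ being the one fixed in the statement.

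First I would record that
$$
\partial_x\varphi_{s,r}(x,a)=-\frac{r^{2H}}{\rho^2_{s,r}}\Bigl(x-\tfrac{\mu_{s,r}}{r^{2H}}a\Bigr)\varphi_{s,r}(x,a),
$$
so that, completing the square in~\eqref{sec3-eq3.2000} to isolate the marginal factor $e^{-a^2/(2r^{2H})}$ and using $|z|e^{-z^2/2}\le C$,
$$
|\partial_x\varphi_{s,r}(x,a)|\le \frac{C r^{H}}{\rho^2_{s,r}}\,e^{-\frac{r^{2H}}{4\rho^2_{s,r}}(x-\frac{\mu_{s,r}}{r^{2H}}a)^2}\,e^{-\frac{a^2}{2r^{2H}}}.
$$
Interchanging the two coordinates (i.e. completing the square in $y$ instead of $x$) gives the analogous bound for $|\partial_y\varphi_{s,r}(a,y)|$ with $r^{2H}$ replaced by $s^{2H}$ everywhere except in $\rho_{s,r}$: prefactor $Cs^{H}/\rho^2_{s,r}$, bulk variance $\rho^2_{s,r}/s^{2H}$, marginal factor $e^{-a^2/(2s^{2H})}$, and center shift $\mu_{s,r}a/s^{2H}$. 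After the substitution $u=x-a$ (resp. $u=y-a$), and writing $\sigma^2=\rho^2_{s,r}/r^{2H}$, $v_0=a(1-\mu_{s,r}/r^{2H})$ (and their $s$-analogues $\tilde\sigma,\tilde v_0$), the left-hand sides of~\eqref{sec3-eq3.21007}--\eqref{sec3-eq3.22001007} are dominated by $C_\alpha\,\frac{r^{H}}{\rho^2_{s,r}}\int_0^\infty(u^{-\alpha}+u^{\alpha})\,e^{-c(u+v_0)^2/\sigma^2}\,du$ and the corresponding $s$-expression.

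Rescaling $u=\sigma z$ turns $\int_0^\infty u^{\pm\alpha}e^{-c(u+v_0)^2/\sigma^2}\,du$ into $\sigma^{1\pm\alpha}\int_p^\infty(z-p)^{\pm\alpha}e^{-cz^2}\,dz$ with $p=v_0/\sigma$, and one checks that $\sup_{p\in{\mathbb R}}\int_p^\infty(z-p)^{-\alpha}e^{-cz^2}\,dz<\infty$ while $\int_p^\infty(z-p)^{\alpha}e^{-cz^2}\,dz\le C_\alpha(1+|p|^{\alpha})$. Inserting the two-sided bound $\rho_{s,r}\asymp r^{H}(s-r)^{H}$ of Lemma~\ref{lem2.10}, the contributions that do not carry the factor $|p|^{\alpha}$ come out of order $r^{-(1+\alpha)H}(s-r)^{-(1+\alpha)H}$ (and, using $r\le s\le t$, of the same order for the $y$-integral), so they lie within the claimed bounds. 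For the terms carrying $|p|^{\alpha}=(|v_0|/\sigma)^{\alpha}$ I would use Lemma~\ref{lem2.20}, which gives $|v_0|=|a|\,|r^{2H}-\mu_{s,r}|/r^{2H}\le C_{H,t}|a|(s-r)r^{-1}$ (resp. $|\tilde v_0|\le C_{H,t}|a|(s-r)s^{-1}$), together with the elementary bound $|a|^{\alpha}e^{-a^2/(2r^{2H})}\le C_\alpha r^{\alpha H}$ (resp. with $s$), to absorb the entire $a$-dependence; collecting the powers of $r$ and $s-r$ and invoking $H>\tfrac12$ (which makes the exponents such as $\alpha(2H-1)$ and $\alpha(3H-1)$ nonnegative) yields~\eqref{sec3-eq3.21007} and~\eqref{sec3-eq3.22001007}. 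The asymmetry between the two lines---the extra $r^{-\alpha H}$ in~\eqref{sec3-eq3.22001007}---is produced precisely here: in the $y$-integral the marginal factor is $e^{-a^2/(2s^{2H})}$ and the bulk variance $\rho^2_{s,r}/s^{2H}$, which forces a factor $s^{-\alpha(1-H)}\le r^{-\alpha(1-H)}$ and hence one extra negative power of $r$.

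The only genuinely delicate point is the free parameter $a$. The Gaussian bulk of $\partial_x\varphi_{s,r}(\cdot,a)$ is centered at $\mu_{s,r}a/r^{2H}$, \emph{not} at the point $x=a$ where the logarithmic singularity of $\psi_1(x-a)$ sits, and the gap between them, measured in units of the bulk width $\sigma\asymp(s-r)^{H}$, is $|p|\asymp|a|(s-r)^{1-H}r^{-1}$, which is unbounded in $a$. The content of the estimate is that this ``large-$a$'' regime is tamed by the marginal factor $e^{-a^2/(2r^{2H})}$; quantifying that trade-off (the $|p|^{\alpha}$ step above) is where the work lies, while everything else is the same routine bookkeeping as in the proof of Lemma~\ref{lem3.1}.
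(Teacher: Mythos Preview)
Your proposal is correct and follows essentially the same route as the paper: differentiate $\varphi_{s,r}$ explicitly, complete the square to isolate the marginal factor $e^{-a^2/(2r^{2H})}$, reduce to one-dimensional Gaussian integrals by substitution, replace $|\log u|$ by $C_\alpha(u^{-\alpha}+u^{\alpha})$, and finish with Lemmas~\ref{lem2.10}--\ref{lem2.20}. The paper organizes the computation slightly differently---it keeps the factor $|y|e^{-y^2/2}$ intact and instead splits $\log\bigl(\tfrac{\rho}{r^H}(y+\text{shift})\bigr)$ into $\Delta_1+\Delta_2$, then handles $\Delta_{22}$ by the cases $a\ge0$ and $a<0$---whereas you first sacrifice half the exponent via $|z|e^{-z^2/2}\le Ce^{-z^2/4}$ and track the $a$-dependence through the single shift parameter $p$; your packaging is a bit more streamlined.

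One small remark: your closing explanation of the asymmetry between \eqref{sec3-eq3.21007} and \eqref{sec3-eq3.22001007} does not actually match your own computation. If you trace your bounds for the $y$-integral, the term carrying $|\tilde p|^{\alpha}$ comes out as $C\rho_{s,r}^{-1}(s-r)^{\alpha}s^{\alpha(H-1)}$, and using $s^{\alpha(H-1)}\le r^{\alpha(H-1)}$ returns you to exactly the same expression as in the $x$-case; so your method in fact yields the \emph{stronger} bound $C_{H,t,\alpha}r^{-(1+\alpha)H}(s-r)^{-(1+\alpha)H}$ for both integrals, and \eqref{sec3-eq3.22001007} follows a fortiori. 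The weaker exponent $r^{-(1+2\alpha)H}$ in the paper's statement is an artifact of its particular decomposition, not something intrinsic that your argument needs to reproduce.
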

\begin{proof}
Given $a\in {\mathbb R}$ and $0<r<s\leq t$. Make the substitution $\frac{r^H}{\rho}(x-\frac{\mu}{r^{2H}}a)=y$. Then
\begin{align*}
\int_a^\infty&|\log(x-a)|\left|\frac{\partial}{\partial x}\varphi_{s,r}(x,a)\right|dx\\
&=\frac{r^{H}}{\rho}\int_a^\infty|\log(x-a)| \left|\frac{r^{H}}{\rho}\left(x-\frac{\mu}{r^{2H}}a\right)\right|\varphi_{s,r}(x,a)dx\\
&=\frac1{2\pi\rho}e^{-\frac{a^2}{2r^{2H}}}\int_{-\frac{\mu-r^{2H}}{\rho r^H}a}^\infty\left|\log\left\{\frac{\rho}{r^H}\left(y+\frac{\mu-r^{2H}}{\rho r^{H}}a \right)\right\}\right||y|e^{-\frac12y^2}dy\\
&\leq \frac1{2\pi\rho}\left(\int_{-\frac{\mu-r^{2H}}{\rho r^H}a}^\infty\left|\log\frac{\rho}{r^H}\right||y|e^{-\frac12y^2}dy\right.\\
&\qquad\left.+
e^{-\frac{a^2}{2r^{2H}}}\int_{-\frac{\mu-r^{2H}}{\rho r^H}a}^\infty\left|\log\left(y+\frac{\mu-r^{2H}}{\rho r^{H}}a \right)\right||y|e^{-\frac12y^2}dy\right)\\
&\equiv \frac1{\rho}\left(\Delta_1(s,r,a)+\Delta_2(s,r,a)\right).
\end{align*}
By Lemma~\ref{lem2.10} and the fact $|\log x|\leq x+x^{-\alpha}$ for all $x>0$ and $\alpha\in (0,1)$ we see that
\begin{align*}
\Delta_1(s,r,a)\leq \left(\left(\frac{\rho}{r^H}\right)^{-\alpha}+\frac{\rho}{r^H}
\right)\int_{-\frac{\mu-r^{2H}}{\rho r^H}a}^\infty |y|e^{-\frac12y^2}dy
\leq \frac{r^{H\alpha}}{\rho^\alpha}+\frac{\rho}{r^H}\leq \frac{C_{H,t,\alpha}}{(s-r)^{H\alpha}}
\end{align*}
and

\begin{align*}
\Delta_2(s,r,a)&\leq e^{-\frac{a^2}{2r^{2H}}}\int_{-\frac{\mu-r^{2H}}{\rho r^H}a}^\infty\left(y+\frac{\mu-r^{2H}}{\rho r^{H}}a \right)|y|e^{-\frac12y^2}dy\\
&\qquad+e^{-\frac{a^2}{2r^{2H}}}\int_{-\frac{\mu-r^{2H}}{\rho r^H}a}^\infty\left(y+\frac{\mu-r^{2H}}{\rho r^{H}}a \right)^{\alpha-1}|y|e^{-\frac12y^2}dy\\
&\equiv \Delta_{21}(s,r,a)+\Delta_{22}(s,r,a).
\end{align*}
Now, let us estimate $\Delta_{21}(s,r,a)$ and $\Delta_{22}(s,r,a)$. We have
\begin{align*}
\Delta_{21}(s,r,a)
&\leq \int_{-\frac{\mu-r^{2H}}{\rho r^H}a}^\infty |y|^2e^{-\frac12y^2}dy+e^{-\frac{a^2}{2r^{2H}}}\int_{-\frac{\mu-r^{2H}}{\rho r^H}a}^\infty\frac{\mu-r^{2H}}{\rho r^{H}}|a||y|e^{-\frac12y^2}dy\\
&\leq \int_{-\infty}^\infty|y|^2e^{-\frac12y^2}dy +\left(\frac{|a|}{r^H}e^{-\frac{a^2}{2r^{2H}}}\right)\left( \frac{\mu-r^{2H}}{\rho}\right)\int_{-\infty}^\infty|y|e^{-\frac12y^2}dy\\
&\leq \sqrt{2\pi}\left(1+\frac{\mu-r^{2H}}{\rho}\right)
\end{align*}
by the fact $|y|e^{-\frac12y^2}\leq 1$. On the other hand, we have also
\begin{align*}
\Delta_{22}&(s,r,a)1_{\{a\geq 0\}}\\
&\leq e^{-\frac{a^2}{2r^{2H}}}\left(\int_{-\frac{\mu-r^{2H}}{\rho r^H}a}^0\left(y+\frac{\mu-r^{2H}}{\rho r^{H}}a \right)^{\alpha-1}|y|e^{-\frac12y^2}dy+\int_0^\infty y^{\alpha+1}e^{-\frac12y^2}dy\right)1_{\{a\geq 0\}}\\
&\leq \frac1{\alpha}e^{-\frac{a^2}{2r^{2H}}}\left(\frac{\mu-r^{2H}}{\rho r^H}a\right)^{\alpha}1_{\{a\geq 0\}}+C_\alpha1_{\{a\geq 0\}}\\
&\leq C_\alpha\frac{(\mu-r^{2H})^\alpha}{\rho^{\alpha}}1_{\{a\geq 0\}}+C_\alpha1_{\{a\geq 0\}}
\end{align*}
and
\begin{align*}
\Delta_{22}(s,r,a)1_{\{a<0\}}&\leq
1_{\{a<0\}}\int_{-\frac{\mu-r^{2H}}{\rho r^H}a}^{\frac{\mu-r^{2H}}{\rho r^H}(1-a)}\left(y+\frac{\mu-r^{2H}}{\rho r^{H}}a\right)^{\alpha-1}|y|e^{-\frac12y^2}dydy\\
&\qquad+1_{\{a<0\}}\int_{\frac{\mu-r^{2H}}{\rho r^H}(1-a)}^\infty\left(\frac{\mu-r^{2H}}{\rho r^{H}}\right)^{\alpha-1}|y|e^{-\frac12y^2}dydy\\
&\leq \alpha^{-1}\frac{(\mu-r^{2H})^{\alpha}}{(\rho r^{H})^{\alpha}}1_{\{a<0\}} +\frac{(\mu-r^{2H})^{\alpha-1}}{(\rho r^{H})^{\alpha-1}}1_{\{a<0\}}
\end{align*}
by the fact $|y|^\alpha e^{-\frac12y^2}\leq 1$ with $0\leq \alpha\leq 1$. It follows from Lemma~\ref{lem2.10} and Lemma~\ref{lem2.20} that
\begin{align*}
\Delta_2(s,r,a)&=\Delta_{21}(s,r,a)+\Delta_{22}(s,r,a)\\
&\leq C_\alpha\left(1+\frac{\mu-r^{2H}}{\rho}+
\frac{(\mu-r^{2H})^\alpha}{\rho^{\alpha}}+\frac{(\mu-r^{2H})^\alpha}{(\rho r^{H})^\alpha} +\frac{(\mu-r^{2H})^{\alpha-1}}{(\rho r^{H})^{\alpha-1}}\right)\\
&\leq C_{H,\alpha,t}(s-r)^{-(1-H)(1-\alpha)}r^{-\alpha H}
\end{align*}
for all $0<r<s\leq t$. Combining this with Lemma~\ref{lem2.10}, we have
\begin{align*}
\int_{{\mathbb R}}&\psi_1(x-a)\left|\frac{\partial}{\partial x}\varphi_{s,r}(x,a)\right|dx\\
&=\int_{{\mathbb R}}\left|\frac{\partial}{\partial x}\varphi_{s,r}(x,a)\right|dx+\int_a^\infty |\log(x-a)|\left|\frac{\partial}{\partial x}\varphi_{s,r}(x,a)\right|dx\\
&\leq \frac{C_{H,\alpha,t}}\rho\left(1+\Delta_1(s,r,a)+\Delta_2(s,r,a)\right)\\
&\leq \frac{C_{H,\alpha,t}}{\rho}\left(1+
(s-r)^{-H\alpha}+r^{-\alpha H}(s-r)^{-(1-H)(1-\alpha)}\right)\\
&\leq C_{H,\alpha,t}(s-r)^{-(1+\alpha)H}r^{-(1+\alpha)H}
\end{align*}
for all $0<r<s\leq t$ and $1-H<\alpha\leq 1$. Similarly, we can obtain the estimate~\eqref{sec3-eq3.22001007}.
\end{proof}

Now, we can prove Proposition~\ref{lem3.2}.
\begin{proof}[Proof of Proposition~\ref{lem3.2}]
Let $G_n,n\geq 2$ be defined in~\eqref{eq3-2020202}. Then
$$
E\left|G_{n}(B^H_s-a)-F'_{+}(B^H_s-a)\right|^2\longrightarrow 0\quad (n\to \infty)
$$
for all $s\geq 0$ and $a\in {\mathbb R}$ by Lemma~\ref{lem3.301}, Lebesgue's dominated convergence theorem and the next estimate:
\begin{equation}\label{prop3.1-eq1001}
\begin{split}
E[\psi_1(B^H_s-a)^2]&=\int_a^\infty (1+|\log(x-a)|)^2\varphi_{s}(x)dx<\infty
\end{split}
\end{equation}
for all $s\geq 0$ and $a\in {\mathbb R}$. Thus, it is sufficient to show that
the sequence
$$
Y^H_t(n):=\int_0^tG_{n}(B^H_s-a)\delta B^H_s,\quad n\geq 2
$$
is a Cauchy sequence in $L^2(\Omega)$. Denote $\widetilde{G}_{n,m}=G_{n}-G_{m}$ for all $n,m\geq 2$. Then $Y^H_t(n)$ is a Cauchy sequence in $L^2(\Omega)$ if and only if
\begin{align*}
E\left|Y^H_t(n)-Y^H_t(m)\right|^2&=E\left|\int_0^t\widetilde{G}_{n,m}(B^H_s-a) \delta B^H_s\right|^2\\
&=\int_0^t\int_0^tE\widetilde{G}_{n,m}(B^H_s-a) \widetilde{G}_{n,m} (B^H_r-a)\phi(s,r)drds\\
&\qquad+\int_0^tds\int_0^sd\xi\int_0^tdr\int_0^rd\eta
\phi(s,\eta)\phi(r,\xi)\\
&\hspace{4cm}\cdot E\left[\widetilde{G}_{n,m}'(B^H_s-a) \widetilde{G}_{n,m}'(B^H_r-a)\right]\\
&\equiv \Lambda_{n,m}(1)+\Lambda_{n,m}(2)\longrightarrow 0
\end{align*}
as $n,m\to \infty$.

On the one hand, we have
$$
|\widetilde{G}_{n,m}(x)|\leq C\psi_1(x)\quad (x\in {\mathbb R})
$$
and $\widetilde{G}_{n,m}(x)\to 0$ for all $x\in {\mathbb R}$, as $n,m$ tends to infinity, by Lemma~\ref{lem3.301}. Accrediting with the estimate
\begin{equation}\label{prop3.1-eq1}
\begin{split}
\Lambda_{2}(s,r,a,a)&:=\int_0^t\int_0^t E[\psi_1(B^H_s-a)\psi_1(B^H_r-a)]\phi(s,r)dsdr\\
&=\int_0^t\int_0^t\phi(s,r)dsdr
\int_a^\infty\int_a^\infty (1+|\log(x-a)|)\\
&\qquad\qquad\qquad\cdot(1+|\log(y-a)|)\varphi_{s,r}(x,y)dxdy<\infty
\end{split}
\end{equation}
and Lebesgue's dominated convergence theorem, we give the convergence
\begin{equation}\label{sec3-eq3.80200011=}
\Lambda_{n,m}(1)=\int_0^t\int_0^t\phi(s,r)drds\int_{{\mathbb R}^2}\widetilde{G}_{n,m}(x-a)\widetilde{G}_{n,m}(y-a) \varphi_{s,r}(x,y)dxdy \longrightarrow 0
\end{equation}
for $a\in {\mathbb R}$, as $n,m$ tend to infinity.

On the other hand, by Lemma~\ref{lem3.-1} we have
\begin{equation}\label{sec3-eq3.802000134}
\begin{split}
\Lambda_{n,m}(2)&=\int_0^tds\int_0^sd\xi\int_0^tdr\int_0^rd\eta
\phi(s,\eta)\phi(r,\xi)\\
&\qquad\qquad \cdot\int_{\mathbb{R}^2}\widetilde{G}'_{n,m}(x-a)\widetilde{G}'_{n,m}(y-a)
\Psi_{s,r,a,a}(x,y)dxdy\\
&\quad+\int_0^tds\int_0^tdr\int_0^sd\xi\int_0^rd\eta \phi(s,\eta)\phi(r,\xi)
\Theta_{n,m}(s,r,a,a)
\end{split}
\end{equation}
for all $n,m$, $t\geq 0$ and $a\in {\mathbb R}$, where
\begin{align*}
\Theta_{n,m}(s,r,a,b)&=-\widetilde{G}_{n,m}(1)\int_{{\mathbb R}}\widetilde{G}_{n,m}(x-a)\frac{\partial}{\partial x}\varphi_{s,r}(x,b)dx\\
&\quad-\widetilde{G}_{n,m}(1)\int_{{\mathbb R}}\widetilde{G}_{n,m}(y-b)\frac{\partial}{\partial y}\varphi_{s,r}(a,y)dy-\varphi_{s,r}(a,b)\left(\widetilde{G}_{n,m}(1)\right)^2.
\end{align*}
Noting that
\begin{align*}
\int_0^r|s-\xi|^{2H-2}d\xi=\frac1{2H-1}\left(s^{2H-1}
+|s-r|^{2H-1}{\rm sign}(r-s)\right),
\end{align*}
we get
\begin{equation}\label{eq3-300}
\int_0^sd\xi\int_0^r|r-\xi|^{2H-2}|s-\eta|^{2H-2}d\eta \leq
\frac2{(2H-1)^2}r^{2H-1}s^{2H-1}.
\end{equation}
It follows from Lemma~\ref{lem3.301} and Lemma~\ref{lem3.301-4} with $1-H<\alpha<\frac{1-H}H\wedge\frac12$ that
\begin{equation}\label{sec3-eq3.802000188}
\begin{split}
\int_0^tds&\int_0^tdr\int_0^sd\xi\int_0^rd\eta \phi(s,\eta)\phi(r,\xi)
|\Theta_{n,m}(s,r,a,a)|\\
&\leq C_{H,t}\widetilde{G}_{n,m}(1)\int_0^tds\int_0^tdr\int_0^sd\xi\int_0^rd\eta \frac{\phi(s,\eta)\phi(r,\xi)}{|s-r|^{H(1+\alpha)}(s\wedge r)^{(1+\alpha)H}}\\
&\qquad+C_{H,t} \widetilde{G}_{n,m}(1)\int_0^tds\int_0^tdr\int_0^sd\xi\int_0^rd\eta \frac{\phi(s,\eta)\phi(r,\xi)}{|s-r|^{H(1+\alpha)}(s\wedge r)^{(1+2\alpha)H}}\\
&\qquad+\left(\widetilde{G}_{n,m}(1)\right)^2 \int_0^tds\int_0^tdr\int_0^sd\xi\int_0^rd\eta \phi(s,\eta)\phi(r,\xi)\frac1{\rho}\\
&\leq C_{H,t}\widetilde{G}_{n,m}(1)\left(1+\widetilde{G}_{n,m}(1)\right) \longrightarrow 0\quad(n,m\to \infty)
\end{split}
\end{equation}
for all $t\geq 0$ and $a\in {\mathbb R}$. Moreover,~\eqref{eq3-300} and Lemma~\ref{lem3.301-3} imply that
\begin{equation}\label{sec3-eq3.802000108}
\begin{split}
\int_0^tds\int_0^sd\xi\int_0^tdr&\int_0^rd\eta
\phi(s,\eta)\phi(r,\xi)\\
&\cdot\int_{\mathbb{R}^2}\psi_2(x-a)\psi_2(y-a)
\Psi_{s,r,a,a}(x,y)dxdy<\infty
\end{split}
\end{equation}
for all $t\geq 0$ and $a\in {\mathbb R}$. Combining this with~\eqref{sec3-eq3.802000134},~\eqref{sec3-eq3.802000188},  Lemma~\ref{lem3.301-1} and Lebesgue's dominated convergence theorem that the convergence, we have
$$
\Lambda_{n,m}(2)\longrightarrow 0,
$$
as $n,m$ tend to infinity. Thus, we have showed that $Y^H_t(n),n=1,2,\ldots$ is a Cauchy sequence in $L^2(\Omega)$ and the process
$$
\lim_{n\to \infty}\int_0^tG_n(B^H_s-a)\delta B^H_s=\int_0^tF'_{+}(B^H_s-a)\delta B^H_s,\quad t\geq 0
$$
exists in $L^2(\Omega)$.

Denote
\begin{align*}
\widetilde{\Theta}_{n}(s,r,a,b)&:=-G_{n}(1)\int_{{\mathbb R}}G_{n}(x-a)\frac{\partial}{\partial x}\varphi_{s,r}(x,b)dx\\
&\quad-G_{n}(1)\int_{{\mathbb R}}G_{n}(y-b)\frac{\partial}{\partial y}\varphi_{s,r}(a,y)dy-\varphi_{s,r}(a,b)G_{n}(1)G_{n}(1)
\end{align*}
for all $a,b\in {\mathbb R}$ and $0<r<s$. Then, for all $t\geq 0$ and $a\in {\mathbb R}$ we have
\begin{align*}
E\left|\int_0^tG_{n}(B^H_s-a)\delta B^H_s\right|^2 &=\int_0^t\int_0^tEG_{n}(B^H_s-a)G_{n}(B^H_r-a)\phi(s,r)drds\\
&\qquad+\int_0^tds\int_0^sd\xi\int_0^tdr\int_0^rd\eta
\phi(s,\eta)\phi(r,\xi)\\
&\hspace{2.5cm}\cdot E\left[G'_{n}(B^H_s-a)G'_{n} (B^H_r-a)\right]\\
&=\int_0^t\int_0^tEG_{n}(B^H_s-a)G_{n}(B^H_r-a)\phi(s,r)drds\\
&\qquad+\int_0^tds\int_0^sd\xi\int_0^tdr\int_0^rd\eta
\phi(s,\eta)\phi(r,\xi)\\
&\hspace{2.5cm}\cdot \int_{{\mathbb R}^2}  G'_{n}(x-a)G'_{n}(y-a)\Psi_{s,r,a,a}(x,y)dxdy\\
&\qquad+\int_0^tds\int_0^tdr\int_0^sd\xi\int_0^rd\eta \phi(s,\eta)\phi(r,\xi)\widetilde{\Theta}_{n}(s,r,a,a)
\end{align*}
by Lemma~\ref{lem3.0}. Notice that
$$
\int_0^tds\int_0^tdr\int_0^sd\xi\int_0^rd\eta \phi(s,\eta)\phi(r,\xi)\widetilde{\Theta}_{n}(s,r,a,a)\longrightarrow 0,
$$
as $n$ tends to infinity, by Lemma~\ref{lem3.301}, Lemma~\ref{lem3.301-4} and~\eqref{eq3-300}. We introduce the identity~\eqref{sec3-eq3.802} by taking the limit in $L^2(\Omega)$ and the proposition follows.
\end{proof}
Finally, by considering the function on ${\mathbb R}^2$
\begin{align*}
\widetilde{\Psi}_{s,r,a,b}(x,y):=\varphi_{s,r}(x,y)&-\varphi_{s,r}(x,b)\theta(y-1-b)\\
&-\varphi_{s,r}(a,y)\theta(x-1-a)
+\varphi_{s,r}(a,b)\theta(x-1-a)\theta(y-1-b)
\end{align*}
with $s,r>0$ and $a,b\in {\mathbb R}$, and in a same way as proof of Proposition~\ref{lem3.2}, we can show that the integral
$$
\int_0^tF_{-}'(B^H_s-a)\delta B^H_s,\quad t\geq 0
$$
and the process
\begin{equation}\label{sec4-eq4.1-001}
{\mathcal C}^{-,H}_t(a):=2\left( F_{-}(B^H_t-a)-F_{-}(-a)-\int_0^tF_{-}'(B^H_s-a)
\delta B^H_s\right),\quad t\geq 0
\end{equation}
exist in $L^2(\Omega)$ for all $a\in {\mathbb R}$, where
\begin{equation}\label{sec3-eq3.1000}
F_{-}(x)=
\begin{cases}
0,& {\text {if $x\geq 0$}},\\
x\log(-x)-x,& {\text {if $x<0$}}.
\end{cases}
\end{equation}
\begin{proposition}\label{lem3.3}
For all $a\in {\mathbb R}$ the integral
\begin{equation}\label{sec3-eq3.6}
X_t^{H}(a):=\int_0^t\log|B^H_s-a|\delta B^H_s,
\end{equation}
and
\begin{equation}\label{sec3-eq3.802000}
\begin{split}
E\left|\int_0^t\log|B^H_s-a|\delta B^H_s\right|^2&
=\int_0^t\int_0^tE[\log|B^H_s-a|\log|B^H_r-a|]\phi(s,r)dsdr\\
&\hspace{-2cm}+\int_0^tds\int_0^sd\xi\int_0^tdr\int_0^rd\eta
\phi(s,\eta)\phi(r,\xi)\int_{{\mathbb R}^2}
\frac{\Psi_{s,r,a,a}(x,y)dxdy}{(x-a)(y-a)}
\end{split}
\end{equation}
for all $t\geq 0$ and $a\in {\mathbb R}$ and the process
\begin{equation}\label{sec4-eq4.2}
{\mathcal C}^{H}_t(a):=2\left(F(B^H_t-a)-F(-a)-\int_0^t\log|B^H_s-a|\delta B^H_s\right),\quad t\geq 0
\end{equation}
are well defined, where $F(x)=x\log|x|-x$
\end{proposition}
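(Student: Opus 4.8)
The plan is to reduce everything to the two half-line functionals already built. Since
$F=F_{+}+F_{-}$ and, away from $0$,
$\log|x|=F'_{+}(x)+F'_{-}(x)$, one expects
$$
X_t^H(a)=\int_0^tF'_{+}(B^H_s-a)\delta B^H_s+\int_0^tF'_{-}(B^H_s-a)\delta B^H_s ,
$$
so that existence of $X^H(a)$ in $L^2(\Omega)$ (in fact in ${\rm Dom}(\delta^H)$) is immediate from
Proposition~\ref{lem3.2} together with its $F_{-}$-counterpart stated just above, and consequently
${\mathcal C}^H_t(a)={\mathcal C}^{+,H}_t(a)+{\mathcal C}^{-,H}_t(a)$ is well defined for every
$a\in{\mathbb R}$ and $t\ge 0$. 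To make this rigorous one invokes Proposition~\ref{prop2.1}:
approximate $\log|B^H_\cdot-a|$ by $G_n(B^H_\cdot-a)$ with $G_n$ the mollification of $\log|\cdot|$
(built exactly as in~\eqref{eq3-2020202}, now with a two-sided mollifier), check $L^2(\Omega;{\mathcal H})$
convergence of the integrands and $L^2(\Omega)$ convergence of $\delta^H(G_n(B^H_\cdot-a)1_{[0,t]})$,
and conclude.

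For the moment identity~\eqref{sec3-eq3.802000} I would re-run the mollification argument of
Proposition~\ref{lem3.2} verbatim. Apply the Skorohod isometry~\eqref{sec2-eq2.100} to $G_n(B^H_\cdot-a)1_{[0,t]}$ to get
\begin{align*}
E\left|\int_0^tG_n(B^H_s-a)\delta B^H_s\right|^2
&=\int_0^t\int_0^tE[G_n(B^H_s-a)G_n(B^H_r-a)]\phi(s,r)\,ds\,dr\\
&\quad+\int_0^t\!ds\int_0^s\!d\xi\int_0^t\!dr\int_0^r\!d\eta\,
\phi(s,\eta)\phi(r,\xi)\,E[G'_n(B^H_s-a)G'_n(B^H_r-a)],
\end{align*}
then use Lemma~\ref{lem3.-1} to rewrite the inner expectation in the second term as
$\int_{{\mathbb R}^2}G'_n(x-a)G'_n(y-a)\Psi_{s,r,a,a}(x,y)\,dx\,dy$ plus boundary terms carrying a factor $G_n(1)$;
since $G_n(1)\to\log 1=0$, these boundary terms vanish in the limit by the same estimates
(Lemma~\ref{lem3.301-4} and~\eqref{eq3-300}) used in Proposition~\ref{lem3.2}. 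Passing to the limit
$n\to\infty$ in the first term is handled by the domination $|G_n|\le\psi_1$ of Lemma~\ref{lem3.301}
and an integrability bound of the type~\eqref{prop3.1-eq1}, and in the second term by the domination
$|G'_n|\le\psi_2$ of Lemma~\ref{lem3.301-1} and a fourfold-integral bound of the type~\eqref{sec3-eq3.802000108};
dominated convergence then yields~\eqref{sec3-eq3.802000}, where
$\int_{{\mathbb R}^2}\Psi_{s,r,a,a}(x,y)/((x-a)(y-a))\,dx\,dy$ is absolutely convergent by the Hölder
estimates of Lemma~\ref{lem3.0}. Equivalently one may expand
$E|X^{+,H}_t(a)+X^{-,H}_t(a)|^2$ and supply the cross term by polarization of Proposition~\ref{lem3.2}.

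The main obstacle is purely technical: Lemmas~\ref{lem3.1},~\ref{lem3.301-3} and~\ref{lem3.301-4} are
stated on a single quadrant $[a,\infty)\times[b,\infty)$, whereas for $\log|\cdot|$ (unlike $F_{+}$, which
is supported on the half line) the integrals run over all of ${\mathbb R}^2$. One has to split
${\mathbb R}^2$ into the four quadrants determined by $(a,a)$ and reprove each of those estimates on each
quadrant; this is routine because $\varphi_{s,r}$ is even under $(x,y)\mapsto(-x,-y)$ and the Hölder bounds
of Lemma~\ref{lem3.0} are insensitive to reflections, so the same powers of $r$, $s-r$ and $s$ reappear.
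The resulting $\phi$-weighted fourfold integrals converge provided $(1+\beta)H<1$, which holds for
$\beta>0$ small since $H<1$; this is exactly what makes the dominating functions integrable and the
dominated-convergence passages legitimate. Once this quadrant bookkeeping is in place, existence of
$X^H(a)$, the identity~\eqref{sec3-eq3.802000}, and well-posedness of ${\mathcal C}^H(a)$ follow.
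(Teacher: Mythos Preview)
Your proposal is correct and takes essentially the same approach as the paper: reduce to the decomposition $F'=F'_{+}+F'_{-}$ and invoke Proposition~\ref{lem3.2} together with its $F_{-}$-analogue. The paper's own proof is literally one sentence (``the proposition follows from $F'=F'_{+}+F'_{-}$''), so your more explicit treatment—directly mollifying $\log|\cdot|$, using Lemma~\ref{lem3.-1} (where $G_n(1)\to\log 1=0$ kills the boundary terms), and noting the routine quadrant extension of Lemmas~\ref{lem3.1} and~\ref{lem3.301-3} needed to make the $\mathbb R^2$-integral in~\eqref{sec3-eq3.802000} converge—fills in details the paper leaves implicit.
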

\begin{proof}
Clearly, $F'(x)=\log|x|$, and the proposition follows from $F'=F'_{+}+F'_{-}$.
\end{proof}

\section{A representation of the functional ${\mathcal C}^H(a)$}
\label{sec4}

In this section we will consider the representation of the
functionals ${\mathcal C}^{+,H}(a),{\mathcal C}^{-,H}(a)$ and ${\mathcal C}^H(a)$, which point out that  $\frac1\pi{\mathcal C}_t^H(\cdot)$ is the Hilbert transform of weighted local time ${\mathscr L}^H(\cdot,t)$.

\begin{lemma}\label{lem4.1}
For any $0<\varepsilon<1$, $0<r<s\leq t$ and $\beta\in (0,1)$ we have
\begin{equation}\label{sec4-eq4.60}
\begin{split}
\Lambda_3(s,r,a):&=\int_a^{a+\varepsilon}\int_a^{a+\varepsilon}
\left(\log{(x-a)}-(\frac1\varepsilon\log\varepsilon)(x-a)\right)\\
&\qquad\qquad \cdot
\left(\log{(y-a)}-(\frac1\varepsilon\log\varepsilon)(y-a)\right)
\varphi_{s,r}(x,y)dxdy\\
&\leq C_H(sr)^{-H/2}\varepsilon^H
\end{split}
\end{equation}
and
\begin{equation}\label{sec4-eq4.70}
\begin{split}
\Lambda_4(s,r,a):&=\int_a^{a+\varepsilon}\int_a^{a+\varepsilon}
\left(\frac1{x-a}-\frac1\varepsilon\log\varepsilon\right)
\left(\frac1{y-a}-\frac1\varepsilon\log\varepsilon\right)
|\Psi_{s,r,a,a}(x,y)|dxdy\\
&\leq C_{H,t,\beta}\frac{s^{\beta H/2}}{r^{(1+\frac{\beta}2)H}(s-r)^{(1+\beta)H}} \varepsilon^{\beta}(1+\log^2\varepsilon).
\end{split}
\end{equation}
\end{lemma}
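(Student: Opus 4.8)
The plan is to prove the two estimates~\eqref{sec4-eq4.60} and~\eqref{sec4-eq4.70} separately. First I would record the reductions that make both routine. On the square $[a,a+\varepsilon]\times[a,a+\varepsilon]$ with $0<\varepsilon<1$ one has $\theta(1+a-x)=\theta(1+a-y)=1$, so that $\Psi_{s,r,a,a}(x,y)=\varphi_{s,r}(x,y)-\varphi_{s,r}(x,a)-\varphi_{s,r}(a,y)+\varphi_{s,r}(a,a)$ there; in particular~\eqref{lem3.1-eq1} (taken with $b=a$) bounds $|\Psi_{s,r,a,a}(x,y)|$ on this square. I would also note two elementary facts: since $\log\varepsilon<0$, on $(0,\varepsilon)$ the function $u\mapsto\log u-\frac{\log\varepsilon}{\varepsilon}u$ is negative (it is increasing and vanishes at $u=\varepsilon$) while $u\mapsto\frac1u-\frac{\log\varepsilon}{\varepsilon}$ is positive, so the integrands in $\Lambda_3$ and $\Lambda_4$ are nonnegative; and $\varepsilon(1+\log^2\varepsilon)\le C_H\varepsilon^H$ for all $\varepsilon\in(0,1)$, which follows from $\varepsilon\le\varepsilon^H$ together with the boundedness of $\varepsilon\mapsto\varepsilon^{1-H}\log^2\varepsilon$ on $(0,1)$.

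For~\eqref{sec4-eq4.60}, write $g(u)=\log u-\frac{\log\varepsilon}{\varepsilon}u$ and apply the Cauchy--Schwarz inequality with respect to the measure $\varphi_{s,r}(x,y)dxdy$ on the square:
$$
\Lambda_3(s,r,a)\le\Bigl(\int_a^{a+\varepsilon}\!\!\int_a^{a+\varepsilon}g(x-a)^2\varphi_{s,r}(x,y)dxdy\Bigr)^{1/2}\Bigl(\int_a^{a+\varepsilon}\!\!\int_a^{a+\varepsilon}g(y-a)^2\varphi_{s,r}(x,y)dxdy\Bigr)^{1/2}.
$$
Integrating out $y$ in the first factor gives $\int_{\mathbb R}\varphi_{s,r}(x,y)dy=\varphi_s(x)\le(\sqrt{2\pi}\,s^H)^{-1}$, and integrating out $x$ in the second gives $\varphi_r(y)\le(\sqrt{2\pi}\,r^H)^{-1}$; this is precisely where the geometric mean $(sr)^{-H/2}$ is produced, whereas the cruder pointwise bound $g(x-a)g(y-a)\le\frac12(g(x-a)^2+g(y-a)^2)$ would only yield $r^{-H}$. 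It then remains to check the scalar estimate $\int_0^\varepsilon g(u)^2du\le C\varepsilon(1+\log^2\varepsilon)$, which follows from $g(u)^2\le2\log^2u+2\varepsilon^{-2}\log^2\varepsilon\,u^2$ and explicit antiderivatives, and to invoke $\varepsilon(1+\log^2\varepsilon)\le C_H\varepsilon^H$.

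For~\eqref{sec4-eq4.70}, substitute the bound from~\eqref{lem3.1-eq1} for $|\Psi_{s,r,a,a}(x,y)|$ on the square; the double integral then factorizes into $\bigl(\int_0^\varepsilon(\frac1u-\frac{\log\varepsilon}{\varepsilon})u^{\beta/2}du\bigr)^2$, which is computed explicitly to be $\varepsilon^\beta\bigl(\frac2\beta+\frac{2|\log\varepsilon|}{\beta+2}\bigr)^2\le C_\beta\varepsilon^\beta(1+\log^2\varepsilon)$. Combining this with $\rho_{s,r}^{1+\beta}\ge c_H(r^H(s-r)^H)^{1+\beta}$ from Lemma~\ref{lem2.10} and simplifying $(1+\beta)H-\frac{\beta H}{2}=(1+\frac\beta2)H$ gives the claim. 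The whole argument is routine; the only points requiring a little care are the bookkeeping that identifies $\Psi_{s,r,a,a}$ with the plain second difference on the small square (so that~\eqref{lem3.1-eq1} applies and the integrands stay nonnegative) and, for $\Lambda_3$, the observation that Cauchy--Schwarz rather than an arithmetic-mean split is what yields the symmetric weight $(sr)^{-H/2}$ demanded by the statement.
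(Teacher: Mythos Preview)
Your proposal is correct and, for $\Lambda_4$, follows exactly the paper's route: the paper also reduces to the factorized scalar integral via the second-difference bound~\eqref{lem3.1-eq1} and then invokes Lemma~\ref{lem2.10}. For $\Lambda_3$ the paper gives no details (it simply writes ``the estimate~\eqref{sec4-eq4.60} is clear''), so your Cauchy--Schwarz argument---which is precisely what is needed to produce the symmetric weight $(sr)^{-H/2}$ rather than the cruder $r^{-H}$ one would get from $\varphi_{s,r}\le(2\pi\rho_{s,r})^{-1}$---fills that gap in a clean and natural way.
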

\begin{proof}
The estimate~\eqref{sec4-eq4.60} is clear. In order to prove~\eqref{sec4-eq4.70}, we have
\begin{align*}
\int_a^{a+\varepsilon}\int_a^{a+\varepsilon} &\left(\frac1{x-a}-\frac1\varepsilon\log\varepsilon\right)
\left(\frac1{y-a}-\frac1\varepsilon\log\varepsilon\right)\\
&\qquad\qquad\cdot[(x-a)(y-a)]^{\beta/2}dxdy \leq C_\beta\varepsilon^{\beta}(1+\log^2\varepsilon)
\end{align*}
for all $\beta\in (0,1)$, which gives
\begin{align*}
\int_a^{a+\varepsilon}\int_a^{a+\varepsilon}&
\left(\frac1{x-a}-\frac1\varepsilon\log\varepsilon\right)
\left(\frac1{y-a}-\frac1\varepsilon\log\varepsilon\right)
|\Psi_{s,r,a,a}(x,y)|dxdy\\
&=\int_a^{a+\varepsilon}\int_a^{a+\varepsilon}
\left(\frac1{x-a}-\frac1\varepsilon\log\varepsilon\right)
\left(\frac1{y-a}-\frac1\varepsilon\log\varepsilon\right)\\
&\qquad\qquad
\cdot |\varphi_{s,r}(x,y)
-\varphi_{s,r}(x,b)-\varphi_{s,r}(a,y) +\varphi_{s,r}(a,b)|dxdy\\
&\leq C_{H,t,\beta}\frac{s^{\beta H/2}}{r^{(1+\frac{\beta}2)H}(s-r)^{(1+\beta)H}} \varepsilon^{\beta}(1+\log^2\varepsilon)
\end{align*}
by~\eqref{lem3.1-eq1} and Lemma~\ref{lem2.10}. This completes the proof.
\end{proof}

\begin{lemma}\label{lem4.2}
Let $\frac12<H<1$ and $M>0$. We then have
\begin{equation}\label{sec4-eq4.80}
E\left|{\mathscr L}^H(b,t)-{\mathscr L}^H(a,t)\right|^2\leq C_{H,\alpha,t,M}|b-a|^{\alpha}
\end{equation}
for all $0<\alpha<\frac{1-H}{H}$, $t\geq 0$ and $a,b\in [-M,M]$.
\end{lemma}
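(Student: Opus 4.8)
The plan is to turn the asserted bound into the convergence of a deterministic double integral, by expressing the $L^2$-norm of the increment of the weighted local time through the mixed second difference of the joint density $\varphi_{s,r}$, and then applying the pointwise estimate~\eqref{lem3.1-eq1} together with Lemma~\ref{lem2.10}.

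First I would establish, for all $a,b\in{\mathbb R}$ and $t\ge 0$, the second moment identity
$$
E\left[{\mathscr L}^H(b,t){\mathscr L}^H(a,t)\right]=(2H)^2\int_0^t\!\!\int_0^t(sr)^{2H-1}\varphi_{s,r}(b,a)\,ds\,dr,
$$
which, together with the invariance of the right-hand side under $s\leftrightarrow r$ (so that it equals its counterpart with $a,b$ interchanged), yields
$$
E\left|{\mathscr L}^H(b,t)-{\mathscr L}^H(a,t)\right|^2=(2H)^2\int_0^t\!\!\int_0^t(sr)^{2H-1}\bigl[\varphi_{s,r}(b,b)-\varphi_{s,r}(b,a)-\varphi_{s,r}(a,b)+\varphi_{s,r}(a,a)\bigr]\,ds\,dr.
$$
Since ${\mathscr L}^H(\cdot,t)$ is defined through the Dirac mass $\delta(B^H_s-\cdot)$, I would prove this by mollification: with $p_\varepsilon(x)=(2\pi\varepsilon)^{-1/2}e^{-x^2/2\varepsilon}$ put $L_\varepsilon(x):=2H\int_0^tp_\varepsilon(B^H_s-x)s^{2H-1}ds$; then $L_\varepsilon(x)\to{\mathscr L}^H(x,t)$ in $L^2(\Omega)$ (standard; see Coutin {\it et al}~\cite{Cout}, Hu {\it et al}~\cite{Hu1}, and it also follows from the computation below since it forces $E|L_\varepsilon(x)-L_{\varepsilon'}(x)|^2\to0$), and by Fubini
$$
E\left[L_\varepsilon(b)L_{\varepsilon'}(a)\right]=(2H)^2\int_0^t\!\!\int_0^t(sr)^{2H-1}\varphi^{\varepsilon,\varepsilon'}_{s,r}(b,a)\,ds\,dr,
$$
where $\varphi^{\varepsilon,\varepsilon'}_{s,r}$ is the density of $(B^H_s+\sqrt\varepsilon Z_1,B^H_r+\sqrt{\varepsilon'}Z_2)$ with $Z_1,Z_2$ independent standard normals independent of $B^H$. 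Because adding independent noise only enlarges the determinant of the covariance, $\varphi^{\varepsilon,\varepsilon'}_{s,r}(b,a)\le(2\pi\rho_{s,r})^{-1}$ uniformly in $\varepsilon,\varepsilon'$, while $(sr)^{2H-1}\rho_{s,r}^{-1}\le C_H(sr)^{2H-1}(s\wedge r)^{-H}|s-r|^{-H}$ is integrable on $[0,t]^2$ by Lemma~\ref{lem2.10} (here $H<1$); since moreover $\varphi^{\varepsilon,\varepsilon'}_{s,r}(b,a)\to\varphi_{s,r}(b,a)$ at every $(s,r)$ with $s\ne r$, dominated convergence and the continuity of the inner product give the identity.

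Granted this, I would apply the pointwise estimate~\eqref{lem3.1-eq1} with $\beta=\alpha$ and its four arguments specialized to $b,b,a,a$, giving
$$
\bigl|\varphi_{s,r}(b,b)-\varphi_{s,r}(b,a)-\varphi_{s,r}(a,b)+\varphi_{s,r}(a,a)\bigr|\le 2\frac{(sr)^{\alpha H/2}}{\rho_{s,r}^{1+\alpha}}|b-a|^{\alpha},
$$
and then Lemma~\ref{lem2.10}, which gives $\rho_{s,r}^{1+\alpha}\ge c_{H,\alpha}(s\wedge r)^{(1+\alpha)H}|s-r|^{(1+\alpha)H}$, so that
$$
E\left|{\mathscr L}^H(b,t)-{\mathscr L}^H(a,t)\right|^2\le C_{H,\alpha}|b-a|^{\alpha}\int_0^t\!\!\int_0^t\frac{(sr)^{2H-1+\alpha H/2}}{(s\wedge r)^{(1+\alpha)H}\,|s-r|^{(1+\alpha)H}}\,ds\,dr.
$$
The last double integral does not depend on $a,b$, so it only remains to check its finiteness (which then furnishes a constant uniform over $a,b\in[-M,M]$): by symmetry it is twice the integral over $\{0<r<s<t\}$, and the substitution $r=su$ exhibits it as a finite multiple of $B\!\left(H(1-\tfrac{\alpha}{2}),\,1-(1+\alpha)H\right)\int_0^t s^{2H-1-\alpha H}\,ds$; the $s$-integral converges because $\alpha<2$, and the Beta integral converges exactly when $1-(1+\alpha)H>0$, i.e.\ $\alpha<\tfrac{1-H}{H}$, which is the hypothesis. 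I expect the only delicate step to be the derivation of the second moment formula and the identification of the mollified limit with ${\mathscr L}^H(\cdot,t)$; after that the proof is a routine application of~\eqref{lem3.1-eq1} and Lemma~\ref{lem2.10} plus the one-line integrability check, whose breakdown point is precisely $\alpha=\tfrac{1-H}{H}$.
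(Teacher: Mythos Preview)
Your proof is correct, and it takes a genuinely different route from the paper's. The paper proceeds via Tanaka's formula~\eqref{eq2.4}, writing ${\mathscr L}^H(x,t)=2(\psi_t(x)-\widetilde{B}^H_t(x))$ with $\psi_t(x)=(B^H_t-x)^+-(-x)^+$ Lipschitz and $\widetilde{B}^H_t(x)=\int_0^t1_{\{B^H_s>x\}}\delta B^H_s$; it then computes $E|\widetilde{B}^H_t(b)-\widetilde{B}^H_t(a)|^2$ through the Skorohod isometry~\eqref{sec2-eq2.100}, splitting into a first-order term $G_1$ and a Malliavin-derivative term $G_2$, the latter reducing (after the $\xi,\eta$ integrations) to essentially the same mixed second difference $\Lambda_5=\varphi_{s,r}(a,a)-\varphi_{s,r}(a,b)-\varphi_{s,r}(b,a)+\varphi_{s,r}(b,b)$ that you encounter directly. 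Your approach bypasses Tanaka and the Skorohod machinery entirely: the mollification argument gives the second-moment identity in one stroke, and the whole estimate then follows from~\eqref{lem3.1-eq1} and Lemma~\ref{lem2.10}. This is shorter and, incidentally, produces a constant that does not actually depend on $M$ (the paper's $M$-dependence enters only when converting the $(b-a)^2$ and $(b-a)^{1+\beta}$ bounds on $\psi_t$ and $G_1$ into $|b-a|^\alpha$). The paper's route has the advantage of staying within the Skorohod-integral framework it has already built, and the $G_1$ estimate is of independent use; your route is the more economical proof of this particular lemma.
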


The lemma is a direct consequence of H\"older continuity of $x\mapsto {\mathscr L}^H(x,t)$. Here, we shall use other method to prove it.
\begin{proof}[Proof of Lemma~\ref{lem4.2}]
Without loss of generality we may assume that $0<a<b$. Define the function $f_{a,b}(x)=1_{(a,b]}(x)$ and denote
$$
\widetilde{B}_t^H(x):=\int_0^t1_{\{B_s^H>x\}}\delta B_s^H
$$
and
$$
\psi_t(x):=(B_t^H-x)^{+}-(-x)^{+}
$$
for all $x\in {\mathbb R}$. Then the function $x\mapsto \psi_t(x)$ is Lipschitz continuous with Lipschitz constant $2$, and we have
$$
\left|\psi_t(x)-\psi_t(y)\right|\leq 2|x-y|
$$
for all $x,y\in {\mathbb R}$ and
$$
{\mathscr L}^{H}(x,t)=2\left(\psi_t(x) -\widetilde{B}_t^H(x)\right).
$$
by Tanaka formula, which deduces
\begin{align*}
E\left|{\mathscr L}^{H}(b,t)-{\mathscr L}^{H}(a,t)\right|^2&\leq 8(b-a)^2+4E\left|\widetilde{B}_t^H(b)-\widetilde{B}_t^H(a)\right|^2
\end{align*}
for all $t\geq 0$.

On the other hand, similar to the proof of~\eqref{sec3-eq3.802} by approximating the function $f(x)=1_{(a,b]}(x)$ by smooth functions we can obtain
\begin{equation}\label{sec4-eq4.90}
\begin{split}
E\left(\widetilde{B}_t^H(b)-\widetilde{B}_t^H(a)\right)^2
&=E\left(\int_0^tf_{a,b}(B^H_s)\delta B^H_s\right)^2\\
&=\int_0^t\int_0^tE\left[f_{a,b}(B^H_s)f_{a,b}(B^H_r)\right] \phi(s,r)dsdr\\
&\qquad +\int_0^tds\int_0^sd\xi\int_0^tdr\int_0^rd\eta
\Lambda_5(s,r,a,b)\phi(s,\eta)\phi(r,\xi)\\
&\equiv G_1(a,b)+G_2(a,b)
\end{split}
\end{equation}
for all $\frac12<H<1$, where
\begin{align*}
\Lambda_5(s,r,a,b)&=\varphi_{s,r}(a,a)-\varphi_{s,r}(a,b)-\varphi_{s,r}(b,a)
+\varphi_{s,r}(b,b).
\end{align*}
For the first term, we have
\begin{align*}
E\left[f_{a,b}(B^H_s)f_{a,b}(B^H_r)\right]&=\int_a^b\int_a^b
\frac{1}{2\pi\rho_{s,r}}\exp\left(-\frac1{2\rho^2_{s,r}} (r^{2H}x^2-2\mu_{s,r}xy+s^{2H}y^2)\right)dxdy\\
&=\frac1{2\pi}
\int_{\frac{a}{r^{H}}}^{\frac{b}{r^{H}}}e^{-\frac12x^2}dx
\int_{\frac{ar^{H}-\mu_{s,r}x}{\rho_{s,r}}}^{\frac{br^{H}-\mu_{s,r}
x}{\rho_{s,r}}}e^{-\frac12y^2}dy\\
&\leq\frac1{\sqrt{2\pi}}\int_{\frac{a}{r^{H}}}^{\frac{b}{r^{H}}}
e^{-\frac12x^2}dx \left(\frac1{\sqrt{2\pi}}\int_{\frac{ar^{H}-\mu_{s,r}
x}{\rho_{s,r}}}^{\frac{br^{H}-\mu_{s,r}
x}{\rho_{s,r}}}e^{-\frac12y^2}dy\right)^\beta\\
&\leq \left(\frac{r^H(b-a)}{\rho_{s,r}}\right)^\beta
\int_{\frac{a}{r^{H}}}^{\frac{b}{r^{H}}}e^{-\frac12x^2}dx \leq\frac{r^{(\alpha-1)H}}{\rho_{s,r}^{\beta}}(b-a)^{1+\beta},
\end{align*}
for all $s,r>0$ and $\beta\in (0,1)$. It follows from  Lemma~\ref{lem2.10} that
\begin{equation}\label{sec4-eq4.90-1}
\begin{split}
G_1(a,b)&=\int_0^t\int_0^tE\left[f_{a,b}(B^H_s)f_{a,b}(B^H_r)\right] \phi(s,r)dsdr\\
&\leq C_{H,\beta}t^{H(1-\beta)}(b-a)^{1+\beta}
\end{split}
\end{equation}
for all $\frac12<H<1$ and $0\leq \beta<\frac{2H-1}H$.

For the second term, we have also by~\eqref{sec3-eq3.8111} and Lemma~\ref{lem2.10}
\begin{align*}
G_2(a,b)&=\int_0^tds\int_0^sd\xi\int_0^tdr\int_0^rd\eta
\Lambda_5(a,b,s,r)\phi(s,\eta)\phi(r,\xi)\\
&\leq C_H(b-a)^{\alpha}\int_0^t\int_0^t\frac{(sr)^{2H-1}r^{\alpha H}}{ \rho_{s,r}^{1+\alpha}}drds \leq C_{H,\alpha}(b-a)^{\alpha}t^{H(2-\alpha)}
\end{align*}
for all $0<\alpha<\frac{1-H}{H}$, and the lemma follows.
\end{proof}

The main object of this section is to prove the following theorem.
\begin{theorem}\label{th4.1}
The convergence
\begin{equation}\label{sec4-eq4.5}
{\mathcal C}^{+,H}_t(a)=\lim_{\varepsilon\downarrow
0}\left\{(\log\varepsilon){\mathscr L}^H(a,t)+\int_0^t1_{\{B^H_s-a\geq
\varepsilon\}} \frac{2Hs^{2H-1}}{B^H_s-a}ds\right\}
\end{equation}
holds in $L^2(\Omega)$ for all $t\geq 0$.
\end{theorem}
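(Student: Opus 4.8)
The plan is to run the fractional It\^o formula on a smooth cut-off of $F_{+}$ and then read off the weighted occupation term. For $\varepsilon\in(0,1)$ let $F_\varepsilon$ be the function determined by $F_\varepsilon'(x)=0$ for $x\le0$, $F_\varepsilon'(x)=\frac1\varepsilon(\log\varepsilon)x$ for $0<x<\varepsilon$, and $F_\varepsilon'(x)=\log x=F_{+}'(x)$ for $x\ge\varepsilon$ (which is continuous since $F_\varepsilon'(\varepsilon)=\log\varepsilon$), normalised so that $F_\varepsilon(x)\to F_{+}(x)$ for every $x$ as $\varepsilon\downarrow0$; thus $F_\varepsilon''(x)=\frac1\varepsilon(\log\varepsilon)1_{(0,\varepsilon)}(x)+\frac1x1_{\{x\ge\varepsilon\}}$. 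The key observation is that $R_\varepsilon:=F_{+}'-F_\varepsilon'$ is supported on $(0,\varepsilon)$ with $R_\varepsilon(x)=\log x-\frac1\varepsilon(\log\varepsilon)x$ there, and $F_{+}''-F_\varepsilon''$ restricts to $\frac1x-\frac1\varepsilon\log\varepsilon$ on $(0,\varepsilon)$ --- these are precisely the integrands appearing in $\Lambda_3$ and $\Lambda_4$ of Lemma~\ref{lem4.1}. Since $F_\varepsilon$ is $C^1$ with bounded (piecewise continuous) second derivative, a routine smoothing of the two corners at $0$ and $\varepsilon$ followed by a limit --- the Skorohod integral is $L^2$-continuous and $\{a,a+\varepsilon\}$ is ${\mathscr L}^H(\cdot,t)$-null --- shows that It\^o's formula for $B^H$ applies to $f(x)=F_\varepsilon(x-a)$, giving
$$
2\Bigl(F_\varepsilon(B^H_t-a)-F_\varepsilon(-a)-\int_0^tF_\varepsilon'(B^H_s-a)\,\delta B^H_s\Bigr)=2H\int_0^tF_\varepsilon''(B^H_s-a)\,s^{2H-1}ds .
$$

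\textbf{The Lebesgue--Stieltjes term.} Using the occupation formula for the weighted local time, $2H\int_0^t\Phi(B^H_s)s^{2H-1}ds=\int_{\mathbb R}\Phi(x){\mathscr L}^H(x,t)dx$ for bounded Borel $\Phi$ (this is the content of ${\mathscr L}^H(x,t)=2H\int_0^t\delta(B^H_s-x)s^{2H-1}ds$), the right-hand side equals $\frac{\log\varepsilon}{\varepsilon}\int_a^{a+\varepsilon}{\mathscr L}^H(x,t)dx+\int_0^t1_{\{B^H_s-a\ge\varepsilon\}}\frac{2Hs^{2H-1}}{B^H_s-a}ds$. Writing $\frac1\varepsilon\int_a^{a+\varepsilon}{\mathscr L}^H(x,t)dx={\mathscr L}^H(a,t)+\frac1\varepsilon\int_a^{a+\varepsilon}({\mathscr L}^H(x,t)-{\mathscr L}^H(a,t))dx$, Lemma~\ref{lem4.2} gives $E\bigl|\frac1\varepsilon\int_a^{a+\varepsilon}({\mathscr L}^H(x,t)-{\mathscr L}^H(a,t))dx\bigr|^2\le C\varepsilon^\alpha$ for any $\alpha\in(0,\frac{1-H}{H})$, so that $(\log\varepsilon)(\frac1\varepsilon\int_a^{a+\varepsilon}{\mathscr L}^H(x,t)dx-{\mathscr L}^H(a,t))\to0$ in $L^2(\Omega)$ because $(\log\varepsilon)^2\varepsilon^\alpha\to0$. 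Hence, in $L^2(\Omega)$,
$$
2H\int_0^tF_\varepsilon''(B^H_s-a)s^{2H-1}ds=(\log\varepsilon){\mathscr L}^H(a,t)+\int_0^t1_{\{B^H_s-a\ge\varepsilon\}}\frac{2Hs^{2H-1}}{B^H_s-a}ds+o(1).
$$

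\textbf{Convergence of the left-hand side.} By dominated convergence and Gaussian integrability (cf. \eqref{prop3.1-eq1001}) one has $F_\varepsilon(B^H_t-a)\to F_{+}(B^H_t-a)$ and $F_\varepsilon(-a)\to F_{+}(-a)$ in $L^2(\Omega)$. For the stochastic term, linearity and Proposition~\ref{lem3.3} make $\int_0^tR_\varepsilon(B^H_s-a)\delta B^H_s$ well defined, and, exactly as in the proof of Proposition~\ref{lem3.2} (using \eqref{sec2-eq2.100}, Lemma~\ref{lem3.-1}, and the elementary integration \eqref{eq3-300}), its $L^2$-norm is bounded by $C\int_0^t\int_0^t\Lambda_3(s,r,a)|\phi(s,r)|\,dsdr+C\int_0^t\int_0^tr^{2H-1}s^{2H-1}\Lambda_4(s,r,a)\,dsdr$; by Lemma~\ref{lem4.1} this is $O(\varepsilon^H)+O(\varepsilon^\beta(1+\log^2\varepsilon))\to0$ for a suitable $\beta\in(0,\frac{1-H}{H})$. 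Therefore $\int_0^tF_\varepsilon'(B^H_s-a)\delta B^H_s\to\int_0^tF_{+}'(B^H_s-a)\delta B^H_s$ in $L^2(\Omega)$, the left-hand side converges to ${\mathcal C}^{+,H}_t(a)$, and comparison with the previous display gives \eqref{sec4-eq4.5}.

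\textbf{Main difficulty.} The delicate points are the justification of the surrogate It\^o formula for the merely Lipschitz function $F_\varepsilon$ (the smoothing-and-limit step), and above all the recombination: the two summands $(\log\varepsilon){\mathscr L}^H(a,t)$ and $\int_0^t1_{\{B^H_s-a\ge\varepsilon\}}\frac{2Hs^{2H-1}}{B^H_s-a}ds$ each diverge as $\varepsilon\downarrow0$ and only their sum converges, so the cancellation must be controlled quantitatively --- which is exactly where the spatial H\"older regularity of ${\mathscr L}^H(\cdot,t)$ from Lemma~\ref{lem4.2} is needed to beat the logarithmic factor, and where the sharp estimates of Lemma~\ref{lem4.1} are needed to pass to the limit inside the Skorohod integral.
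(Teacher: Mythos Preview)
Your proposal is correct and follows essentially the same route as the paper: the same auxiliary function $F_\varepsilon$, the same application of the fractional It\^o formula justified by mollification, the same use of Lemma~\ref{lem4.1} to control $\int_0^t R_\varepsilon(B^H_s-a)\,\delta B^H_s$ in $L^2$, and the same use of Lemma~\ref{lem4.2} (via the occupation formula for ${\mathscr L}^H$) to absorb the logarithmic blow-up in the averaged local time. The paper organises this into three labelled steps and does the $(\log\varepsilon)$-cancellation after establishing the $\frac12{\mathcal C}^{+,H}_t(a)$ limit, whereas you treat the occupation term first, but the content is the same; your citation of Proposition~\ref{lem3.3} for the well-definedness of $\int_0^tR_\varepsilon\,\delta B^H_s$ should rather be Proposition~\ref{lem3.2} (which handles $F_{+}'$) together with the $C^1$ regularity of $F_\varepsilon$.
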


\begin{proof}
Let $t\geq 0$ and $a\in {\mathbb R}$. We split the proof in three steps.

{\bf Step I.} Define the function $F_\varepsilon$ as follows
$$
F_\varepsilon(x)=
\begin{cases}
0,& {\text {$x\leq 0$}},\\
\frac1{2\varepsilon}(x^2\log\varepsilon),& {\text {$0<x\leq \varepsilon$}},\\
\varepsilon-\frac1{2}(\varepsilon\log\varepsilon)+x\log{x}-x,&
{\text {$x>\varepsilon$}}.
\end{cases}
$$
Then $F_\varepsilon\in C^1({\mathbb R})$, and
$$
F'_{\varepsilon}(x)=
\begin{cases}
0,& {\text {$x\leq 0$}},\\
\frac1{\varepsilon}(x\log\varepsilon),& {\text {$0<x\leq \varepsilon$}},\\
\log x,& {\text {$x>\varepsilon$}},
\end{cases}
\qquad
F''_{\varepsilon}(x)=
\begin{cases}
0,& {\text {$x<0$}},\\
\frac1{\varepsilon} \log\varepsilon,& {\text {$0<x<\varepsilon$}},\\
\frac1x,& {\text {$x>\varepsilon$}}
\end{cases}
$$
for all $\varepsilon\in (0,1)$. We shall show that the It\^o formula
\begin{equation}\label{sec4-eq4.7}
\begin{split}
F_\varepsilon(B^H_t-a)&-F_\varepsilon(-a)-\int_0^t F'_\varepsilon(B^H_s-a)\delta B^H_s\\
&=\frac{\log\varepsilon}{\varepsilon}\int_0^t1_{\{0\leq B^H_s-a
<\varepsilon\}}Hs^{2H-1}ds+\int_0^t1_{\{B^H_s-a\geq
\varepsilon\}}\frac{Hs^{2H-1}}{B^H_s-a}ds
\end{split}
\end{equation}
holds for $\varepsilon\in (0,1)$. Define the sequence of smooth functions
\begin{align}\label{Ito-formula-2}
f_{n,\varepsilon}(x):=\int_{\mathbb
R}F_{\varepsilon}(x-{y})\zeta_n(y)dy=\int_0^2 F_{\varepsilon}(x-\frac{y}n)\zeta(y)dy,\qquad
n=1,2,\ldots
\end{align}
for all $\varepsilon\in (0,1)$, where $\zeta$ is defined by~\eqref{Ito-formula-1} and $\zeta_n(x)=n\zeta(nx)$. Then $f_{n,\varepsilon}\in C^\infty_0({\mathbb R})$ and
\begin{align*}
f_{n,\varepsilon}(B^H_t-a)=f_{n,\varepsilon}(-a) +\int_0^tf'_{n,\varepsilon}(B^H_s-a)\delta B^H_s +H\int_0^tf''_{n,\varepsilon}(B^H_s-a)s^{2H-1}ds
\end{align*}
for all $n=1,2,\ldots$. Notice that
$$
f_{n,\varepsilon}(x)\longrightarrow F_{\varepsilon}(x),\quad f'_{n,\varepsilon}(x)\longrightarrow F'_{\varepsilon}(x)
$$
as $n\to \infty$, uniformly in $\mathbb R$, and
$$
|f''_{n,\varepsilon}(x)|\leq \frac{1}{\varepsilon}|\log\varepsilon|,\qquad \forall x\in {\mathbb R},
$$
and $f''_{n,\varepsilon}(x)\to F''_{\varepsilon}(x)$ pointwise (besides $0$ and $\varepsilon$), as $n\to \infty$ by Lebesgue's dominated convergence theorem. We get
\begin{align*}
\int_0^tf'_{n,\varepsilon}&(B^H_s-a)\delta B^H_s= f_{n,\varepsilon}(B^H_t-a)-f_{n,\varepsilon}(-a)- H\int_0^tf''_{n,\varepsilon}(B^H_s-a)s^{2H-1}ds\\
&\longrightarrow F_{\varepsilon}(B^H_t-a)-F_{\varepsilon}(-a)- H\int_0^tF''_{\varepsilon}(B^H_s-a)s^{2H-1}ds\qquad {\text { in $L^2(\Omega)$}}\\
&=F_{\varepsilon}(B^H_t-a)-F_{\varepsilon}(-a)
-H\varepsilon^{-1}\int_0^t1_{\{0<B^H_s-a<\varepsilon\}} s^{2H-1}ds\qquad {\text { a.s.}},
\end{align*}
as $n\to \infty$, which implies that It\^o's formula
\begin{equation}\label{sec4-eq4.6}
F_\varepsilon(B^H_t-a)=F_\varepsilon(-a)+\int_0^t F'_\varepsilon(B^H_s-a)\delta B^H_s
+H\int_0^tF''_\varepsilon(B^H_s-a)s^{2H-1}ds
\end{equation}
holds for all $\varepsilon\in (0,1)$. This gives~\eqref{sec4-eq4.7}.

{\bf Step II.} We show that the limit
\begin{equation}\label{sec4-eq4.8}
\lim_{\varepsilon\downarrow
0}\left\{F_\varepsilon(B^H_t-a)-F_\varepsilon(-a) -\int_0^tF'_\varepsilon(B^H_s-a)\delta B^H_s
\right\}
\end{equation}
exists in $L^2(\Omega)$, and is equal to
$$
\frac12{\mathcal C}^{+,H}_t(a)=F_{+}(B^H_t-a)-F_{+}(-a)-\int_0^tF_{+}'(B^H_s-a)\delta B^H_s,
$$
where $F_{+}$ is given by~\eqref{sec3-eq3.4}. We have
\begin{equation}\label{sec4-eq4.11}
\begin{split}
E&\left|\frac12{\mathcal C}^{+,H}_t(a)+F_\varepsilon(-a)-F_\varepsilon(B^H_t-a)
+\int_0^tF'_\varepsilon(B^H_s-a)\delta B^H_s \right|^2\\
&\qquad\qquad\leq
3E\left|F(B^H_t-a)-F_\varepsilon(B^H_t-a)\right|^2 +3|F_\varepsilon(-a)-F_{+}(-a)|^2\\
&\hspace{3cm}
+3E\left|\int_0^t[F'(B^H_s-a)-F'_\varepsilon(B^H_s-a)]\delta B^H_s \right|^2.
\end{split}
\end{equation}
The first and second term of the right-hand side in~\eqref{sec4-eq4.11} tends to $0$ as $\varepsilon\to 0$ because
$$
|F_{+}(x)-F_\varepsilon(x)|\leq
\varepsilon-\frac12\varepsilon\log\varepsilon
$$
for all $\varepsilon\in (0,1)$. To estimate the third term, we consider the approximation of the function $F'_\varepsilon$ as follows
\begin{align*}
\widehat{G}_{n,\varepsilon}(x)&=\int_{\mathbb R}F'_\varepsilon(y)
\zeta_n(x-y)dy,\quad n\geq 2
\end{align*}
for all $\varepsilon\in (0,1)$, where $\zeta_n, n\geq 2$ is given by~\eqref{Ito-formula-101}. Then $G_{n,\varepsilon},n\geq 2$ are smooth functions with compact supports. Denote
$$
G_{n,\varepsilon}(x):=G_{n}(x)-\widehat{G}_{n,\varepsilon}(x)
$$
for $x\in {\mathbb R}$, where $G_n$ is defined by~\eqref{eq3-2020202}. Similar to proofs of Lemma~\eqref{lem3.301} and Lemma~\eqref{lem3.301-1}, we can obtain the next statements:
\begin{align*}
|G_{n,\varepsilon}(x)|\leq C\psi_1(x),\quad |G'_{n,\varepsilon}(x)|\leq C\psi_2(x)
\end{align*}
for all $x\in {\mathbb R},\varepsilon\in (0,1)$ and
$$
G_{n,\varepsilon}(x)\longrightarrow F_{+}'(x)-F'_\varepsilon(x),\quad G'_{n,\varepsilon}(x)\longrightarrow F_{+}''(x)-F''_\varepsilon(x)\quad (n\to \infty)
$$
for all $x\neq 0$ and $\varepsilon\in (0,1)$. Thus, in a same way as the proof of Proposition~\ref{lem3.2}, we can obtain
\begin{align*}
E&\left|\int_0^t[F_{+}'(B^H_s-a)-F'_\varepsilon(B^H_s-a)]\delta B^H_s \right|^2\\
&=\int_0^t\int_0^t\Lambda_3(s,r,a)\phi(s,r)dsdr
+\int_0^tds\int_0^sd\xi\int_0^tdr\int_0^rd\eta
\Lambda_4(s,r,a)\phi(s,\eta)\phi(r,\xi)\\
&=\int_0^t\int_0^t\Lambda_3(s,r,a)\phi(s,r)dsdr
+\int_0^t\int_0^t(sr)^{2H-1}drds \Lambda_4(s,r,a)\phi(s,\eta)\phi(r,\xi)\\
&\leq C_{H}(t^H+t^{2-(2+\beta)H})\varepsilon^{\beta\wedge H }(1+\log^2\varepsilon)\longrightarrow 0
\end{align*}
with $0<\beta<\frac{1-H}H$ by Lemma~\ref{lem4.1}. It follows from the It\^o formula~\eqref{sec4-eq4.7} that
\begin{equation}\label{sec4-eq4.12}
\begin{split}
{\mathcal C}^{+,H}_t(a)
&=2\lim_{\varepsilon\downarrow 0}\left\{F_\varepsilon(B^H_t-a)-F_\varepsilon(-a) -\int_0^tF'_\varepsilon(B^H_s-a)\delta B^H_s\right\}\\
&=\lim_{\varepsilon\downarrow 0}J_t^H(\varepsilon,a)
\end{split}
\end{equation}
in $L^2(\Omega)$, where
$$
J_t^H(\varepsilon,a)=\frac{\log\varepsilon}{\varepsilon}
\int_0^t1_{\{0\leq B^H_s-a
<\varepsilon\}}2Hs^{2H-1}ds+\int_0^t1_{\{B^H_s-a\geq
\varepsilon\}}\frac{2Hs^{2H-1}}{B^H_s-a}ds.
$$

{\bf Step III.} To end the proof, we decompose $J_t^H(\varepsilon,a)$ as
\begin{align*}
J_t^H(\varepsilon,a)=I_t(\varepsilon,a)+\left\{\int_0^t1_{\{B^H_s-a\geq
\varepsilon\}}\frac{2Hs^{2H-1}}{B^H_s-a}ds+(\log\varepsilon){\mathscr
L}^H(a,t)\right\},
\end{align*}
where
\begin{align*}
I_t(\varepsilon,a):=\frac{\log\varepsilon}{\varepsilon}
\int_0^t1_{\{0\leq B^H_s-a
<\varepsilon\}}2Hs^{2H-1}ds-(\log\varepsilon){\mathscr L}^H(a,t).
\end{align*}
According to Lemma~\ref{lem4.2} we get
\begin{align*}
E|I_t(\varepsilon,a)|^2:&=(\log\varepsilon)^2{E} \left|\frac{1}{\varepsilon}
\int_0^t1_{\{0\leq B^H_s-a<\varepsilon\}}2Hs^{2H-1}ds-{\mathscr
L}^H(a,t)\right|^2\\
&=(\log\varepsilon)^2{E}\left|\frac{1}{\varepsilon}
\int_0^{\varepsilon}{\mathscr L}^H(x+a,t)dx-{\mathscr
L}^H(a,t)\right|^2\\
&\leq(\log\varepsilon)^2\frac{1}{\varepsilon}
\int_0^{\varepsilon}{E}|{\mathscr L}^H(x+a,t)-{\mathscr
L}^H(a,t)|^2dx\\
&\leq C_{H,t,\alpha}\varepsilon^{\alpha} (\log\varepsilon)^2\longrightarrow 0\qquad\quad (\varepsilon\to 0)
\end{align*}
for all $0<\alpha<\frac{1-H}{H}$ and $t\geq 0$, which shows that
\begin{align*}
{\mathcal C}^{+,H}_t(a)=\lim_{\varepsilon\downarrow 0}\left\{\int_0^t1_{\{B^H_s-a\geq
\varepsilon\}}\frac{2Hs^{2H-1}}{B^H_s-a}ds+(\log\varepsilon){\mathscr
L}^H(a,t)\right\}\qquad {\text { in $L^2(\Omega)$}}
\end{align*}
for all $t\geq 0$, and the theorem follows.
\end{proof}

\begin{theorem}\label{th4.2}
The convergence
\begin{equation}\label{sec4-eq4.16}
{\mathcal C}^H_t(a)=\lim_{\varepsilon\downarrow 0}\int_0^t1_{\{|B^H_s-a|\geq \varepsilon\}}
\frac{2Hs^{2H-1}}{B^H_s-a}ds\equiv {\rm v.p.}\int_0^t\frac{2Hs^{2H-1}}{B^H_s-a}ds
\end{equation}
holds in $L^2(\Omega)$.
\end{theorem}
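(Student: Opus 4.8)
The plan is to add together the two one-sided representations attached to $F_{+}$ and $F_{-}$. Since $F=F_{+}+F_{-}$, Proposition~\ref{lem3.3} gives the splitting ${\mathcal C}^{H}_t(a)={\mathcal C}^{+,H}_t(a)+{\mathcal C}^{-,H}_t(a)$, and Theorem~\ref{th4.1} already disposes of the positive half-line; so the remaining work is to establish the mirror statement on the negative half-line and then to add the two.

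Concretely, the first task is to prove that
$$
{\mathcal C}^{-,H}_t(a)=\lim_{\varepsilon\downarrow 0}\left\{-(\log\varepsilon){\mathscr L}^H(a,t)+\int_0^t1_{\{B^H_s-a\leq -\varepsilon\}}\frac{2Hs^{2H-1}}{B^H_s-a}ds\right\}
$$
in $L^2(\Omega)$. I would prove this by reflecting the three steps of the proof of Theorem~\ref{th4.1}. In Step~I one introduces the left-sided analogue $F_{-,\varepsilon}$ of the function $F_\varepsilon$ used there: $F_{-,\varepsilon}\in C^1({\mathbb R})$ vanishes on $[0,\infty)$, is quadratic on $(-\varepsilon,0)$, and coincides with $F_{-}$ up to an additive constant on $(-\infty,-\varepsilon]$, so that $F''_{-,\varepsilon}(x)=-\varepsilon^{-1}\log\varepsilon$ on $(-\varepsilon,0)$ and $F''_{-,\varepsilon}(x)=1/x$ on $(-\infty,-\varepsilon)$; mollifying $F_{-,\varepsilon}$ exactly as in Step~I of the proof of Theorem~\ref{th4.1} and passing to the limit in the It\^o formula gives
\begin{align*}
2\Bigl(&F_{-,\varepsilon}(B^H_t-a)-F_{-,\varepsilon}(-a)-\int_0^tF'_{-,\varepsilon}(B^H_s-a)\delta B^H_s\Bigr)\\
&=-\frac{\log\varepsilon}{\varepsilon}\int_0^t1_{\{-\varepsilon<B^H_s-a<0\}}2Hs^{2H-1}ds+\int_0^t1_{\{B^H_s-a\leq-\varepsilon\}}\frac{2Hs^{2H-1}}{B^H_s-a}ds.
\end{align*}
In Step~II one shows that the left-hand side converges in $L^2(\Omega)$ to $\frac12{\mathcal C}^{-,H}_t(a)$ as $\varepsilon\downarrow 0$; this is where the function $\widetilde{\Psi}_{s,r,a,b}$ introduced at the end of Section~\ref{sec3} replaces $\Psi_{s,r,a,b}$, and one needs the left-sided analogues of Lemmas~\ref{lem3.1}, \ref{lem3.301-3}, \ref{lem3.301-4} and~\ref{lem4.1}, which follow by identical computations. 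In Step~III, as in the proof of Theorem~\ref{th4.1}, the occupation formula for the weighted local time gives $\varepsilon^{-1}\int_0^t1_{\{-\varepsilon<B^H_s-a<0\}}2Hs^{2H-1}ds=\varepsilon^{-1}\int_{-\varepsilon}^{0}{\mathscr L}^H(a+x,t)\,dx$, and Lemma~\ref{lem4.2} then shows that $-\varepsilon^{-1}(\log\varepsilon)\int_0^t1_{\{-\varepsilon<B^H_s-a<0\}}2Hs^{2H-1}ds+(\log\varepsilon){\mathscr L}^H(a,t)\to 0$ in $L^2(\Omega)$, exactly as the term $I_t(\varepsilon,a)$ was handled in the proof of Theorem~\ref{th4.1}.

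With both one-sided representations available the conclusion follows at once. For each fixed $\varepsilon\in(0,1)$ the singular counterterm $(\log\varepsilon){\mathscr L}^H(a,t)$ in the representation of ${\mathcal C}^{+,H}_t(a)$ and the counterterm $-(\log\varepsilon){\mathscr L}^H(a,t)$ in the representation of ${\mathcal C}^{-,H}_t(a)$ cancel, so the sum of the two $\varepsilon$-approximants is
\begin{align*}
\int_0^t1_{\{B^H_s-a\geq\varepsilon\}}\frac{2Hs^{2H-1}}{B^H_s-a}ds&+\int_0^t1_{\{B^H_s-a\leq-\varepsilon\}}\frac{2Hs^{2H-1}}{B^H_s-a}ds\\
&=\int_0^t1_{\{|B^H_s-a|\geq\varepsilon\}}\frac{2Hs^{2H-1}}{B^H_s-a}ds.
\end{align*}
Letting $\varepsilon\downarrow 0$, the left-hand side converges in $L^2(\Omega)$ to ${\mathcal C}^{+,H}_t(a)+{\mathcal C}^{-,H}_t(a)={\mathcal C}^H_t(a)$ by Theorem~\ref{th4.1} and its mirror, hence so does the right-hand side; by the definition of the Cauchy principal value in~\eqref{sec2-eq2000} this is precisely the assertion~\eqref{sec4-eq4.16}.

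The only laborious part is Step~II of the mirror argument, which amounts to reproducing the whole chain of density estimates of Section~\ref{sec3} with $\widetilde{\Psi}$ in place of $\Psi$; this is pure symmetry but it is long. The essential new point specific to Theorem~\ref{th4.2} is the elementary observation that the two logarithmically divergent local-time counterterms produced by $F_{+}$ and $F_{-}$ are equal and opposite, so that their sum is the genuine principal value and no estimate beyond those used for Theorem~\ref{th4.1} is required.
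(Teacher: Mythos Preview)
Your proposal is correct and follows essentially the same approach as the paper: the paper's own proof simply states that the mirror representation \eqref{sec4-eq4.17} for ${\mathcal C}^{-,H}_t(a)$ is obtained ``in the same way as the proof of~\eqref{sec4-eq4.5}'' and that~\eqref{sec4-eq4.16} then follows from $F=F_{+}+F_{-}$. Your write-up merely spells out in more detail the three reflected steps and the cancellation of the $\pm(\log\varepsilon){\mathscr L}^H(a,t)$ counterterms, which is exactly the content the paper leaves implicit.
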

\begin{proof}
In the same way as the proof of~\eqref{sec4-eq4.5}, we can show that the convergence
\begin{equation}\label{sec4-eq4.17}
{\mathcal C}^{-,H}_t(a)
=\lim_{\varepsilon\downarrow
0}\left\{-(\log\varepsilon){\mathscr
L}^H(a,t)+\int_0^t1_{\{B^H_s-a\leq -\varepsilon\}}
\frac{2Hs^{2H-1}}{B^H_s-a}ds\right\}
\end{equation}
holds in $L^2(\Omega)$. Thus,~\eqref{sec4-eq4.16} follows from
$F=F_{+}+F_{-}$, where $F(x)=x\log|x|-x$.
\end{proof}
According to the occupation formula we get
\begin{equation}\label{sec4-eq4.18}
\begin{split}
{\mathcal C}^H_t(a)
&=\lim_{\varepsilon\downarrow 0}\int_0^t1_{\{|B^H_s-a|\geq
\varepsilon\}}
\frac{2Hs^{2H-1}}{B^H_s-a}ds\qquad {\text { in $L^2(\Omega)$}}\\
&=\lim_{\varepsilon\downarrow 0}\int_{\mathbb
R}1_{\{|x-a|\geq \varepsilon\}}\frac{{\mathscr L}^H(x,t)}{x-a}dx\qquad {\text { in $L^2(\Omega)$}}\\
&={\rm v.p.}\int_{\mathbb
R}\frac{{\mathscr L}^H(x,t)}{x-a}dx=\pi\left({\mathscr H}{\mathscr L}^H(\cdot,t)\right)(a)
\end{split}
\end{equation}
for all $t\geq 0$ and $a\in {\mathbb R}$. As two natural results we get the fractional version of Yamada's formula
\begin{align*}
(B^H_t-a)&\log|B^H_t-a|-(B^H_t-a)\\
&=-a\log|a|+a+\int_0^t\log|B^H_s-a|\delta B^H_s+\frac12{\rm v.p.}\int_{\mathbb R}\frac{{\mathscr L}^H(x,t)}{x-a}dx
\end{align*}
for all $t\geq 0$ and $a\in {\mathbb R}$, and
\begin{align*}
{\mathcal C}^H_t(b)&-{\mathcal C}^H_s(a)\\
&=\int_0^\infty\left[{\mathscr L}^H(b+x,t)-{\mathscr L}^H(b-x,t)-{\mathscr L}^H(a+x,s)+{\mathscr L}^H(a-x,s)\right]\frac{dx}{x}
\end{align*}
for all $a,b\in {\mathbb R}$ and $s,t\geq 0$. Recall that the local time ${\mathcal L}^H(x,t)$ admits a compact support and it is H\"older continuous of order $\gamma\in (0,1-H)$ in time, and of order $\kappa\in (0,\frac{1-H}{2H})$ in the space variable (see Geman-Horowitz~\cite{Geman}). We see that the process $(a,t)\mapsto {\mathcal C}^H_t(a)$ admits H\"older continuous paths. In particular, we have
\begin{proposition}\label{prop9.1}
Let $\frac12<H<1$. For all $t'>t\geq 0$, we have
$$
E\left[|{\mathcal C}^{H}_{t'}-{\mathcal C}^{H}_t|^2\right] \leq C(t'-t)^{2H_0},
$$
where
$$
H_0=
\begin{cases}
H, & {\text { if $\frac12<H\leq \frac23$}},\\
1-\frac12H, & {\text { if $\frac23<H<1$}}.
\end{cases}
$$
\end{proposition}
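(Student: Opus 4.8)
The plan is to split ${\mathcal C}^{H}$ into its two natural constituents by means of the fractional version of Yamada's formula~\eqref{sec4-eq4.2} (Proposition~\ref{lem3.3} and Theorem~\ref{th4.2}): for a fixed $a\in{\mathbb R}$ and $t'>t\ge 0$,
$$
\tfrac12\bigl({\mathcal C}^{H}_{t'}(a)-{\mathcal C}^{H}_{t}(a)\bigr)=\bigl(F(B^H_{t'}-a)-F(B^H_{t}-a)\bigr)-\int_{t}^{t'}\log|B^H_s-a|\,\delta B^H_s,\qquad F(x)=x\log|x|-x,
$$
where the Skorohod integral over $[t,t']$ is well defined since ${\rm Dom}(\delta^{H})$ is a vector space and $\log|B^H_\cdot-a|\,{1}_{[0,u]}\in{\rm Dom}(\delta^{H})$ for $u\in\{t,t'\}$ by Proposition~\ref{lem3.3}. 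Thus $E|{\mathcal C}^{H}_{t'}(a)-{\mathcal C}^{H}_{t}(a)|^{2}\le 8(I_1+I_2)$, where $I_1=E|F(B^H_{t'}-a)-F(B^H_{t}-a)|^{2}$ and $I_2=E\bigl|\int_{t}^{t'}\log|B^H_s-a|\,\delta B^H_s\bigr|^{2}$; since $(t'-t)^{2H}\le(t'-t)^{2H_0}$ for $t'-t\le1$ and the case $t'-t>1$ is trivial from $\sup_{0\le u\le T}E[{\mathcal C}^{H}_{u}(a)^{2}]<\infty$, it suffices to prove $I_1\le C(t'-t)^{2H}$ and $I_2\le C(t'-t)^{2H_0}$.

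For $I_1$, since $F'(x)=\log|x|$ we have $F(B^H_{t'}-a)-F(B^H_{t}-a)=\int_{B^H_{t}-a}^{B^H_{t'}-a}\log|u|\,du$, and elementary estimates on this integral give $I_1\le C(t'-t)^{2H}$. Indeed, with $W=B^H_{t'}-B^H_{t}$ (Gaussian, $E[W^{2}]=(t'-t)^{2H}$): off the ``crossing'' event $\{|B^H_{t}-a|\le|W|\}$ the integrand is comparable to $\log|B^H_{t}-a|$ and one applies Cauchy--Schwarz using $E[\log^{4}|B^H_{t}-a|]+E[\log^{4}|B^H_{t'}-a|]<\infty$, while the crossing event has probability $O((t'-t)^{H})$ by a level-crossing estimate for Gaussians, and on it the contribution is of order $W^{2}\log^{2}|W|$, hence of lower order.

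For $I_2$ we apply the variance identity~\eqref{sec2-eq2.100} to $u=\log|B^H_\cdot-a|\,{1}_{(t,t']}$, carrying out exactly the computation that proves Proposition~\ref{lem3.3} (restricted to $[t,t']$; the boundary terms vanish because $F'(1)=\log 1=0$, cf.\ the proof of Proposition~\ref{lem3.2}); this gives $I_2=A_1+A_2$, where $\Xi_{s,r}(a):=\int_{{\mathbb R}^{2}}\Psi_{s,r,a,a}(x,y)\,[(x-a)(y-a)]^{-1}\,dx\,dy$ and
\begin{align*}
A_1&=\int_{t}^{t'}\!\!\int_{t}^{t'}E\bigl[\log|B^H_s-a|\log|B^H_r-a|\bigr]\phi(s,r)\,ds\,dr,\\
A_2&=\int_{t}^{t'}\!ds\int_{0}^{s}\!d\xi\int_{t}^{t'}\!dr\int_{0}^{r}\!d\eta\ \phi(s,\eta)\phi(r,\xi)\,\Xi_{s,r}(a).
\end{align*}
Since $\int_{t}^{t'}\!\int_{t}^{t'}\phi(s,r)\,ds\,dr=\|{1}_{(t,t']}\|_{\mathcal H}^{2}=(t'-t)^{2H}$ and $(s,r)\mapsto E[\log|B^H_s-a|\log|B^H_r-a|]$ is bounded on $[0,T]^{2}$, we get $A_1\le C(t'-t)^{2H}$. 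For $A_2$: bound the $\xi,\eta$-integrals by $C_{H,T}(sr)^{2H-1}\le C_{H,T}$ via~\eqref{eq3-300}; and bound $|\Xi_{s,r}(a)|$ by decomposing ${\mathbb R}^{2}$ into the four quadrants about $(a,a)$ and applying Lemma~\ref{lem3.1} together with its reflected analogues (built from the function $\widetilde\Psi$ of Section~\ref{sec3}) on each, which yields $|\Xi_{s,r}(a)|\le C_{H,T,\beta}(s\vee r)^{\beta H/2}\bigl((s\wedge r)|s-r|\bigr)^{-(1+\beta)H}$ for every $\beta\in(0,1)$. Using $s\wedge r\ge t$ and $\int_{t}^{t'}\!\int_{t}^{t'}|s-r|^{-\gamma}\,ds\,dr=c_{\gamma}(t'-t)^{2-\gamma}$ ($0<\gamma<1$), we obtain $A_2\le C_{H,T,\beta,t}(t'-t)^{2-(1+\beta)H}$ for every $\beta\in(0,\tfrac{1-H}{H})$, and letting $\beta\downarrow0$ (while tracking the constant) gives $A_2\le C(t'-t)^{2-H}$. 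Hence $I_2\le C\bigl((t'-t)^{2H}+(t'-t)^{2-H}\bigr)\le C(t'-t)^{\min\{2H,\,2-H\}}=C(t'-t)^{2H_0}$ for $t'-t\le1$, since $\min\{2H,2-H\}$ equals $2H$ when $\tfrac12<H\le\tfrac23$ and $2-H=2(1-\tfrac12H)$ when $\tfrac23<H<1$.

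The step I expect to be the main obstacle is this last estimate of $A_2$. The kernel $\Xi_{s,r}(a)$ carries the Malliavin-derivative contribution and is precisely the $\Psi$-integral controlled by Lemma~\ref{lem3.1}; it is its borderline singularity $\sim|s-r|^{-H}$ (attained only in the limit $\beta\downarrow0$) that produces the exponent $2-H$, hence the threshold $H=\tfrac23$, and obtaining the clean exponent $2-H$ rather than merely $2-H-\varepsilon$ for every $\varepsilon>0$ requires keeping track of how the constant in Lemma~\ref{lem3.1} blows up as $\beta\downarrow0$ and choosing $\beta$ as a suitable slowly varying function of $t'-t$. By contrast, the bounds $I_1\le C(t'-t)^{2H}$ and $A_1\le C(t'-t)^{2H}$, which supply the competing exponent $2H$ responsible for $H_0=H$ on $(\tfrac12,\tfrac23]$, are routine finite-dimensional Gaussian (resp.\ $\mathcal H$-norm) computations.
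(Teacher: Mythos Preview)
Your route is genuinely different from the paper's. The paper does not use the Yamada decomposition at all: it works directly with the truncated principal-value process ${\mathcal C}^{H,\varepsilon}_t=\int_0^t1_{\{|B^H_s|>\varepsilon\}}(B^H_s)^{-1}\,ds^{2H}$ (for $a=0$), writes
\[
E\Bigl[\tfrac{1_{\{|B^H_s|>\varepsilon\}}1_{\{|B^H_r|>\varepsilon\}}}{B^H_sB^H_r}\Bigr]
=2\int_\varepsilon^\infty\!\!\int_\varepsilon^\infty\frac{1}{xy}\bigl(1-e^{-\mu_{s,r}xy/\rho_{s,r}^2}\bigr)\varphi_{s,r}(x,y)\,dx\,dy,
\]
and uses the elementary identity $\frac{1}{xy}(1-e^{-cxy})=\int_0^c e^{-xy\xi}\,d\xi$ to bound this quantity by $C/\rho_{s,r}$, uniformly in $\varepsilon$. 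One then only needs to integrate $\int_t^{t'}\!\int_t^{t'}\rho_{s,r}^{-1}\,ds^{2H}dr^{2H}$, and the dichotomy $2H$ versus $2-H$ drops out of a straightforward case split (compare $t$ with $t'-t$). No Malliavin calculus, no $\Psi$-function, no $\beta$-parameter is needed.

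Your approach, by contrast, has a real gap exactly where you flag it. The constant $C_{H,T,\beta}$ in Lemma~\ref{lem3.1} blows up \emph{polynomially} in $1/\beta$: in the proof of that lemma the term $\Lambda_{11}$ is estimated via $\int_a^{a+1}\int_b^{b+1}[(x-a)(y-b)]^{\beta/2-1}dx\,dy=(2/\beta)^2$, so $C_{H,T,\beta}\sim c/\beta^{2}$ as $\beta\downarrow0$. Optimising $\beta^{-2}(t'-t)^{-\beta H}$ over $\beta>0$ yields a minimum of order $|\log(t'-t)|^{2}$, hence your method gives at best
\[
A_2\le C\,(t'-t)^{2-H}\bigl|\log(t'-t)\bigr|^{2},
\]
not the clean $(t'-t)^{2-H}$. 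The ``slowly varying $\beta$'' device cannot remove a logarithm when the constant diverges polynomially; it would work only if the blow-up were itself slowly varying. So as written your argument proves the proposition with $2H_0$ replaced by $2H_0-\varepsilon$ (any $\varepsilon>0$), which is weaker than the stated bound.

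Two smaller points. First, the proposition is stated (and proved in the paper) for $a=0$; your claim that $E[\log|B^H_s-a|\log|B^H_r-a|]$ is bounded on $[0,T]^2$ fails for $a=0$, since $E[\log^2|B^H_s|]\sim H^2\log^2 s$ as $s\to0$, so $A_1$ and $I_1$ also pick up logarithmic corrections when $t$ is near $0$. Second, your constant in $A_2$ explicitly depends on $t$ (through the bound $s\wedge r\ge t$); keeping the factor $(sr)^{2H-1}$ rather than discarding it would repair this, but the $\beta$-loss above remains.
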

\begin{proof}
Given $\varepsilon>0$ and denote
$$
{\mathcal C}^{H,\varepsilon}_t=\int_0^t1_{\{|B^H_s|>\varepsilon\}}\frac{ds^{2H}}{B^H_s}
$$
for $t\geq 0$. We have
\begin{align*}
E&\left[1_{\{|B^H_s|>\varepsilon\}}1_{\{|B^H_r|>\varepsilon\}} \frac{1}{B^H_sB^H_r}\right]=\int_{{\mathbb R}^2}
1_{\{|x|>\varepsilon\}}1_{\{|y|>\varepsilon\}} \frac{1}{xy}\varphi_{s,r}(x,y)dxdy\\
&=\int_\varepsilon^\infty\int_\varepsilon^\infty
\frac{1}{xy}\varphi_{s,r}(x,y)dxdy+\int_{-\infty}^{-\varepsilon} \int_\varepsilon^\infty\frac{1}{xy}\varphi_{s,r}(x,y)dxdy\\
&\qquad+\int_\varepsilon^\infty\int_{-\infty}^{-\varepsilon}
\frac{1}{xy}\varphi_{s,r}(x,y)dxdy+\int_{-\infty}^{-\varepsilon}
\int_{-\infty}^{-\varepsilon}\frac{1}{xy}\varphi_{s,r}(x,y)dxdy\\
&=\int_\varepsilon^\infty\int_\varepsilon^\infty
\frac{1}{xy}\left[\varphi_{s,r}(x,y)-\varphi_{s,r}(-x,y)-\varphi_{s,r}(x,-y) +\varphi_{s,r}(-x,-y)\right]dxdy\\
&=2\int_\varepsilon^\infty\int_\varepsilon^\infty
\frac{1}{xy}\left[\varphi_{s,r}(x,y)-\varphi_{s,r}(-x,y)\right]dxdy\\
&=2\int_\varepsilon^\infty\int_\varepsilon^\infty
\frac{1}{xy}\left(1-e^{
-\frac{1}{\rho^2_{s,r}}\mu_{s,r}xy}\right)\varphi_{s,r}(x,y)dxdy\\
&=2\int_\varepsilon^\infty\int_\varepsilon^\infty \left(\int_0^{\frac{\mu_{s,r}}{\rho^2_{s,r}}} e^{-xy\xi }d\xi\right)\varphi_{s,r}(x,y)dxdy\\
&=2\int_0^{\frac{\mu_{s,r}}{\rho^2_{s,r}}}d\xi \int_\varepsilon^\infty\int_\varepsilon^\infty e^{-xy\xi }\varphi_{s,r}(x,y)dxdy
\end{align*}
for all $s,t\geq 0$. An elementary calculus can show that
\begin{align*}
\int_0^\infty&\int_0^\infty e^{-xy\xi }\varphi_{s,r}(x,y)dxdy\\
&=\frac1{2\pi\rho_{s,r}} \int_0^\infty e^{-\frac1{r^{2H}}(1+2\mu_{s,r}\xi-\rho_{s,r}^2\xi^2)y^2}dy\int_0^\infty
e^{-\frac{r^{2H}}{2\rho_{s,r}^2}\left(x -\frac1{r^{2H}}(\mu_{s,r}-\rho^2_{s,r}\xi)y\right)^2}dx\\
&=\frac1{4\sqrt{1+2\mu_{s,r}\xi-\rho_{s,r}^2\xi^2}}
\end{align*}
for all $\xi>0$, which implies that
\begin{align*}
E&\left[1_{\{|B^H_s|>\varepsilon\}}1_{\{|B^H_r|>\varepsilon\}} \frac{1}{B^H_sB^H_r}\right]=\int_{{\mathbb R}^2}
1_{\{|x|>\varepsilon\}}1_{\{|y|>\varepsilon\}} \frac{1}{xy}\varphi_{s,r}(x,y)dxdy\\
&\qquad\qquad\leq \int_0^{\frac{\mu_{s,r}}{\rho_{s,r}^2}} \frac1{4\sqrt{1+2\mu_{s,r}\xi-\rho_{s,r}^2\xi^2}}d\xi
=\frac1{4\rho_{s,r}}\arcsin\frac{\mu_{s,r}}{\sqrt{\rho_{s,r}^2+\mu_{s,r}^2}}\\
&\qquad\qquad=\frac1\rho_{s,r}\arcsin\frac{\mu_{s,r}}{(sr)^H}\leq \frac1\rho_{s,r}.
\end{align*}
It follows that
\begin{align*}
E\left[|{\mathcal C}^{H,\varepsilon}_{t'}-{\mathcal C}^{H,\varepsilon}_t|^2\right]&\leq \int_t^{t'}
\int_t^{t'}\frac1{\rho_{s,r}}ds^{2H}dr^{2H} \leq
\begin{cases}
C(t'-t)^{2H}, & {\text { if $\frac12<H\leq \frac23$}},\\
C(t-t')^{2-H}, & {\text { if $\frac23<H<1$}}
\end{cases}
\end{align*}
for all $0<t<t'<T$ and $\varepsilon>0$. This shows that
$$
E\left[|{\mathcal C}^{H}_{t'}-{\mathcal C}^{H}_t|^2\right] \leq C(t'-t)^{2H_0}
$$
and the proposition follows.
\end{proof}
\begin{remark}
{\rm

The above continuity results for the process $(x,t)\mapsto {\mathcal C}^H(x,t):={\mathcal C}^H_t(x)$ are some reminders to us that we may consider the following integrals:
$$
\int_0^tu_sd{\mathcal C}^H_s,\quad \int_{\mathbb R}f(x){\mathcal C}^H(dx,t),\quad \int_0^t\int_{\mathbb R}f(x,s){\mathcal C}^H(dx,ds),
$$
where $u$ is an adapted process, and $(x,t)\mapsto f(x,t)$ and $x\mapsto f(x)$ Borel functions on ${\mathbb R}\times [0,T]$ and ${\mathbb R}$, respectively. These will be considered in the other paper.
}
\end{remark}

\section{The occupation formula associated with ${\mathcal C}^H(a)$}
\label{sec5}
From the previous sections we know that the process $(a,t)\mapsto {\mathcal C}^H_t(a)$ is H\"older continuous and in this section our main object is to expound and prove the next theorem which is an analogue of the occupation formula.
\begin{theorem}\label{th5.3}
Let $\frac12<H<1$ and let $g$ be a continuous function with compact support. We then have, almost surely,
\begin{equation}\label{sec5-eq5.26}
\int_{\mathbb R}{\mathcal C}^H_t(x)g(x)dx=2H\pi\int_0^t({\mathscr
H}g)(B^H_s)s^{2H-1}ds
\end{equation}
and
$$
2H\pi\int_0^tf(B^H_s)s^{2H-1}ds=\int_{\mathbb R}{\mathcal
C}^H_t(x)({\mathscr H}^{-1}f)(x)dx
$$
for all $t\geq 0$, where the operator ${\mathscr H}^{-1}$ means the
inverse transform of Hilbert transform ${\mathscr H}$.
\end{theorem}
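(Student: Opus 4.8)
The plan is to deduce~\eqref{sec5-eq5.26} from the Cauchy principal value representation of ${\mathcal C}^H_t(a)$ proved in Theorem~\ref{th4.2} and recorded, in terms of the weighted local time, in~\eqref{sec4-eq4.18}: writing ${\mathcal C}^{H,\varepsilon}_t(a):=\int_{\mathbb R}1_{\{|x-a|\geq\varepsilon\}}(x-a)^{-1}{\mathscr L}^H(x,t)\,dx$, which for each fixed $a$ and $\varepsilon$ equals $\int_0^t1_{\{|B^H_s-a|\geq\varepsilon\}}\frac{2Hs^{2H-1}}{B^H_s-a}ds$ by the occupation formula~\eqref{sec2-1-eq1} applied to the bounded function $u\mapsto1_{\{|u-a|\geq\varepsilon\}}(u-a)^{-1}$, one has ${\mathcal C}^{H,\varepsilon}_t(a)\to{\mathcal C}^H_t(a)$ in $L^2(\Omega)$ uniformly in $a$, since the estimates in the proof of Theorem~\ref{th4.2} are $a$-free. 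I would prove~\eqref{sec5-eq5.26} first for $g\in C^\infty_c({\mathbb R})$ and then remove smoothness by approximation, keeping all supports inside one fixed compact set.

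For $g\in C^\infty_c({\mathbb R})$, fix a compact $K$ containing ${\rm supp}\,g$ and (a.s.) the support of ${\mathscr L}^H(\cdot,t)$. Since $(x,a)\mapsto 1_{\{|x-a|\geq\varepsilon\}}(x-a)^{-1}g(a){\mathscr L}^H(x,t)$ is bounded with support in $K\times K$, Fubini gives
$$
\int_{\mathbb R}{\mathcal C}^{H,\varepsilon}_t(a)g(a)\,da
=\int_{\mathbb R}{\mathscr L}^H(x,t)\Bigl(\int_{\mathbb R}1_{\{|x-a|\geq\varepsilon\}}\frac{g(a)}{x-a}\,da\Bigr)dx .
$$
As $\varepsilon\downarrow0$ the inner integral tends, uniformly in $x$ on $K$ and boundedly, to $\pi({\mathscr H}g)(x)$ by the definition~\eqref{sec3-eq5.23} of ${\mathscr H}$ and the locally uniform convergence of the truncated Hilbert transform of the smooth function $g$. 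Hence, by dominated convergence and the weighted occupation formula $\int_{\mathbb R}\Phi(x){\mathscr L}^H(x,t)dx=2H\int_0^t\Phi(B^H_s)s^{2H-1}ds$ (which follows from~\eqref{sec2-1-eq1} and the definition of ${\mathscr L}^H$) applied to the bounded continuous $\Phi={\mathscr H}g$, the right-hand side converges to $2H\pi\int_0^t({\mathscr H}g)(B^H_s)s^{2H-1}ds$. On the other hand, the uniform-in-$a$ $L^2(\Omega)$-bound for ${\mathcal C}^{H,\varepsilon}_t(a)-{\mathcal C}^H_t(a)$ and Minkowski's inequality yield
$$
\Bigl\|\int_{\mathbb R}g(a)\bigl({\mathcal C}^{H,\varepsilon}_t(a)-{\mathcal C}^H_t(a)\bigr)da\Bigr\|_{L^2(\Omega)}\leq\|g\|_\infty\int_K\bigl\|{\mathcal C}^{H,\varepsilon}_t(a)-{\mathcal C}^H_t(a)\bigr\|_{L^2(\Omega)}da\longrightarrow0 ,
$$
so the left-hand side converges to $\int_{\mathbb R}{\mathcal C}^H_t(a)g(a)da$; equating the two limits proves~\eqref{sec5-eq5.26} for $g\in C^\infty_c({\mathbb R})$.

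To treat a general continuous $g$ with ${\rm supp}\,g\subset K$, take $g_n\in C^\infty_c({\mathbb R})$ with supports in a fixed compact $K'\supset K$ and $g_n\to g$ uniformly. Since $a\mapsto{\mathcal C}^H_t(a)$ has a continuous modification (noted after~\eqref{sec4-eq4.18}) it is a.s. bounded on $K'$, so $\int_{\mathbb R}{\mathcal C}^H_t(a)g_n(a)da\to\int_{\mathbb R}{\mathcal C}^H_t(a)g(a)da$ a.s. For the right-hand side, ${\mathscr H}g_n\to{\mathscr H}g$ in $L^2({\mathbb R})$ by the isometry property of ${\mathscr H}$, and with $p_s$ the Gaussian density of $B^H_s$,
$$
E\int_0^t\bigl|({\mathscr H}g_n)(B^H_s)-({\mathscr H}g)(B^H_s)\bigr|s^{2H-1}ds\leq\|{\mathscr H}g_n-{\mathscr H}g\|_{L^2}\int_0^t\|p_s\|_{L^2}\,s^{2H-1}ds\longrightarrow0 ,
$$
the time integral being finite because $\|p_s\|_{L^2}=c\,s^{-H/2}$. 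Thus the right-hand side converges in $L^1(\Omega)$, a subsequence converges a.s., and passing to the limit gives~\eqref{sec5-eq5.26} for $g$. The second identity then follows by taking $g={\mathscr H}^{-1}f$, legitimate since ${\mathscr H}$ is a bijection of $L^2({\mathbb R})$ with ${\mathscr H}({\mathscr H}^{-1}f)=f$.

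The main obstacle is the interchange of the principal value limit with the integration against ${\mathscr L}^H(\cdot,t)$ — equivalently with the $ds$-integration — together with the merely continuous (non-Hölder) case, in which ${\mathscr H}g$ exists a priori only as an $L^2$-function. The two facts that make these steps go through are the uniform compact support and spatial Hölder continuity of ${\mathscr L}^H(\cdot,t)$, so that ${\mathscr L}^H(\cdot,t)\in L^2({\mathbb R})$ and the truncated Hilbert transforms of $g$ are controlled on its support; and the integrability $\int_0^t\|p_s\|_{L^2}s^{2H-1}ds<\infty$ of the density of $B^H_s$, which converts $L^2$-convergence of ${\mathscr H}g_n$ into the integrated convergence of $({\mathscr H}g_n)(B^H_s)$. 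Enclosing all approximating supports in one fixed compact set is what permits both sides of~\eqref{sec5-eq5.26} to be passed to the limit simultaneously.
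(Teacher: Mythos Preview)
Your argument is correct and takes a genuinely different route from the paper. The paper proves~\eqref{sec5-eq5.26} through the Skorohod-integral definition of ${\mathcal C}^H_t(a)$: it first establishes that $\int_0^t(F'\ast g)(B^H_s)\delta B^H_s$ exists (Lemma~\ref{lem5.3}), then proves a Fubini theorem for the Skorohod integral (Lemma~\ref{lem5.1}),
\[
\int_0^t\!\Bigl(\int_{\mathbb R}F'(B^H_s-x)g(x)\,dx\Bigr)\delta B^H_s=\int_{\mathbb R}\Bigl(\int_0^tF'(B^H_s-x)\delta B^H_s\Bigr)g(x)\,dx,
\]
and finally applies an It\^o formula to $F\ast g$ to identify the result with $H\pi\int_0^t{\mathscr H}g(B^H_s)s^{2H-1}ds$. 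You instead bypass the Skorohod machinery entirely and work from the principal-value/local-time representation~\eqref{sec4-eq4.18}: Fubini is applied at the level of the truncated kernels ${\mathcal C}^{H,\varepsilon}_t(a)$, and the passage $\varepsilon\downarrow 0$ is justified by the uniform-in-$a$-on-compacts rates hidden in Lemmas~\ref{lem4.1} and~\ref{lem4.2}. Your route is shorter once Section~\ref{sec4} is in place and avoids the extra lemmas; the paper's route, on the other hand, makes the connection with the It\^o-formula definition of ${\mathcal C}^H_t$ explicit and does not require revisiting the quantitative estimates of Section~\ref{sec4}. One small correction: the estimates you invoke are not literally ``$a$-free'' --- Lemma~\ref{lem4.2} requires $a\in[-M,M]$ --- but since you immediately restrict to $a\in K$ this is harmless. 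It is also worth noting that your approach is essentially the one the paper itself adopts in Section~\ref{sec7} for the case $0<H<\tfrac12$ (see the proof of Theorem~\ref{th6.2}).
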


In order to prove the theorem we need some preliminaries.
\begin{lemma}\label{lem5.3}
Let $F(x)=x\log|x|-x$ and let $g$ be a continuous function with compact support. Then the integral
$$
\int_0^t(F'\ast g)(B^H_s)\delta B^H_s
$$
exists in $L^2(\Omega)$ for all $t\geq 0$ and the process
$$
{\mathcal X}^{g}_t:=(F\ast g)(B^H_t)-(F\ast g)(0)-\int_0^t(F'\ast g)(B^H_s)\delta B^H_s,\quad t\geq 0
$$
is well-defined.
\end{lemma}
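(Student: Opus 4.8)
The plan is to realize the Skorohod integral $\int_0^t(F'\ast g)(B^H_s)\,\delta B^H_s$ through the family of integrals $X^H_t(y)=\int_0^t\log|B^H_s-y|\,\delta B^H_s$ constructed in Proposition~\ref{lem3.3}. Fix $M>0$ with $g$ vanishing outside $[-M,M]$. Since $F'(x)=\log|x|$ we have $(F'\ast g)(x)=\int_{-M}^M\log|x-y|\,g(y)\,dy$, so formally
\begin{equation*}
\int_0^t(F'\ast g)(B^H_s)\,\delta B^H_s=\int_{-M}^M\left(\int_0^t\log|B^H_s-y|\,\delta B^H_s\right)g(y)\,dy=\int_{-M}^MX^H_t(y)\,g(y)\,dy .
\end{equation*}
I would establish this identity, and with it the $L^2(\Omega)$-existence of the left-hand side, in three steps: first, show $y\mapsto X^H_t(y)$ has a jointly measurable version that is bounded in $L^2(\Omega)$ on $[-M,M]$, so that the right-hand side is a well-defined Bochner integral in $L^2(\Omega)$; second, approximate $F'$ by smooth functions and apply the ordinary Fubini theorem to the approximants; third, pass to the limit using the closedness of $\delta^H$ (Proposition~\ref{prop2.1}).

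For the first step I would read off from the second-moment formula~\eqref{sec3-eq3.802000} of Proposition~\ref{lem3.3} that $E|X^H_t(a)|^2$ is the sum of $\int_0^t\int_0^tE[\log|B^H_s-a|\log|B^H_r-a|]\phi(s,r)\,dsdr$ and a fourfold time integral of $\int_{\mathbb R^2}\frac{\Psi_{s,r,a,a}(x,y)}{(x-a)(y-a)}\,dxdy$. The latter is bounded \emph{uniformly in} $a$: Lemma~\ref{lem3.1}, together with the $F_-$-analogue built from $\widetilde\Psi_{s,r,a,b}$, supplies the $a$-free majorant $C_{H,t,\beta}\,s^{\beta H/2}r^{-(1+\beta)H}(s-r)^{-(1+\beta)H}$, whose integral against $\phi(s,\eta)\phi(r,\xi)$ over the simplex is finite by~\eqref{eq3-300}, exactly as in the proof of Proposition~\ref{lem3.2}. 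The former term is continuous in $a$ by dominated convergence, since $E[(\log|B^H_s-a|)^2]=\int(\log|x-a|)^2\varphi_s(x)\,dx$ is finite and locally bounded in $a$; hence it is bounded on $[-M,M]$. Thus $\sup_{|a|\le M}E|X^H_t(a)|^2<\infty$, and running the difference estimates of Proposition~\ref{lem3.2} with two shifts, in the spirit of Lemma~\ref{lem4.2}, gives $L^2(\Omega)$-continuity, hence joint measurability, of $(\omega,y)\mapsto X^H_t(y)$. Consequently $\int_{-M}^MX^H_t(y)g(y)\,dy$ is a well-defined element of $L^2(\Omega)$.

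For the second and third steps, let $\widehat G_n=\zeta_n\ast F'$ be the mollifications, splitting $F'=F'_++F'_-$ and using the $\zeta_n$ from~\eqref{Ito-formula-101}; then each $\widehat G_n\in C^\infty(\mathbb R)$ grows at most logarithmically, $\widehat G_n\ast g\in C^\infty(\mathbb R)$, and by Lemmas~\ref{lem3.301}, \ref{lem3.301-1} and dominated convergence the weighted processes $(\widehat G_n\ast g)(B^H_\cdot)1_{[0,t]}$ converge to $(F'\ast g)(B^H_\cdot)1_{[0,t]}$ in $L^2(\Omega;\mathcal H)$. For each fixed $n$ the ordinary Fubini theorem gives $\int_0^t(\widehat G_n\ast g)(B^H_s)\,\delta B^H_s=\int_{-M}^M\left(\int_0^t\widehat G_n(B^H_s-y)\,\delta B^H_s\right)g(y)\,dy$. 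The Cauchy-sequence estimates established inside the proof of Proposition~\ref{lem3.2}, through Lemmas~\ref{lem3.1}, \ref{lem3.301-3}, \ref{lem3.301-4}, are uniform in the shift, so $\int_0^t\widehat G_n(B^H_s-y)\,\delta B^H_s\to X^H_t(y)$ in $L^2(\Omega)$ uniformly over $|y|\le M$, whence the right-hand side converges in $L^2(\Omega)$ to $\int_{-M}^MX^H_t(y)g(y)\,dy$. Combining this with the $L^2(\Omega;\mathcal H)$-convergence of the integrands and invoking Proposition~\ref{prop2.1}, we conclude $(F'\ast g)(B^H_\cdot)1_{[0,t]}\in{\rm Dom}(\delta^H)$ with $\int_0^t(F'\ast g)(B^H_s)\,\delta B^H_s=\int_{-M}^MX^H_t(y)g(y)\,dy\in L^2(\Omega)$. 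Since $F\ast g$ grows at most like $|x|\log|x|$ and $B^H_t$ has Gaussian moments of all orders, $(F\ast g)(B^H_t)\in L^2(\Omega)$, so $\mathcal X^g_t$ is a well-defined $L^2(\Omega)$-valued process.

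The main obstacle I anticipate is the uniformity in the translation parameter $y$: one must verify that the constants in Lemmas~\ref{lem3.1}, \ref{lem3.301-3}, \ref{lem3.301-4} and in the Cauchy estimates of Proposition~\ref{lem3.2} do not depend on the shift $a$ — they do not, because $\Psi_{s,r,a,b}$ enters only through translated arguments and the Gaussian density is dominated by $a$-free majorants via Lemmas~\ref{lem3.0} and~\ref{lem2.10} — and then to combine this uniform-in-$y$ convergence of the inner Skorohod integrals with the $L^2(\Omega;\mathcal H)$-convergence of the integrands so that Proposition~\ref{prop2.1} applies. A secondary, milder point is the logarithmic growth of $F'\ast g$ at infinity produced by the total mass of $g$; it only affects the far tail of the Gaussian density and is harmless.
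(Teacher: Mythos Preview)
Your proposal is correct but follows a different route from the paper. The paper works directly with the convolved integrand: it writes out the $L^2$ norm of $\int_0^t(G\ast g)(B^H_s)\,\delta B^H_s$ for smooth $G$, expands via Lemma~\ref{lem3.-1} so that the expression factors as $\int_{\mathbb R^2}g(u)g(v)\,[\cdots]\,dudv$ with the bracketed quantity being exactly the object bounded in Lemma~\ref{lem3.1} (uniformly in $u,v$), and then argues ``similar to Proposition~\ref{lem3.2}'' that the approximating sequence is Cauchy. Your approach instead goes through the already-constructed shifted integrals $X^H_t(y)$: you show the Bochner integral $\int X^H_t(y)g(y)\,dy$ makes sense, establish a Fubini identity at the level of smooth approximants, and then invoke Proposition~\ref{prop2.1} to pass to the limit. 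Both rely on the same core observation---that the estimates of Lemmas~\ref{lem3.1}, \ref{lem3.301-3}, \ref{lem3.301-4} are uniform in the shift---so neither is essentially harder. Your route has the pleasant side effect of delivering the Fubini identity of Lemma~\ref{lem5.1} as a byproduct (the paper proves that separately, using Lemma~\ref{lem5.3} as input), at the cost of needing the measurability/continuity of $y\mapsto X^H_t(y)$ up front. One terminological quibble: what you call ``the ordinary Fubini theorem'' for the approximants is still a stochastic Fubini for $\delta^H$; it is easy for smooth integrands via the duality relation \eqref{sec2-eq2.1}, exactly as in the paper's proof of Lemma~\ref{lem5.1}, but it is not literally the classical Fubini theorem.
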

\begin{proof}
From Lemma~\ref{lem3.-1} it follows that
\begin{align*}
E&\left|\int_0^t(G\ast g)(B^H_s)\delta B^H_s\right|^2
=\int_0^t\int_0^tE\left[(G\ast g)(B^H_s)(G\ast g)(B^H_r)\right]\phi(s,r)dsdr\\
&\qquad+\int_0^tds\int_0^tdr\int_0^sd\xi\int_0^rd\eta \phi(s,\eta)\phi(r,\xi)E\left[(G'\ast g)(B^H_s)(G'\ast g)(B^H_r)\right]\\
&=\int_0^t\int_0^t\phi(s,r)dsdr\int_{{\mathbb R}^2}g(u)g(v)dudv \int_{{\mathbb R}^2}G(x-u)G(y-v)\varphi_{s,r}(x,y)dxdy\\
&\qquad+\int_0^tds\int_0^tdr\int_0^sd\xi\int_0^rd\eta \phi(s,\eta)\phi(r,\xi)\\
&\hspace{2cm}\cdot\int_{{\mathbb R}^2}g(u)g(v)dudv\int_{{\mathbb R}^2}G'(x-u)G'(y-v)\Psi_{s,r,u,v}(x,y)dxdy\\
&\qquad+\int_0^tds\int_0^tdr\int_0^sd\xi\int_0^rd\eta \phi(s,\eta)\phi(r,\xi)
\int_{{\mathbb R}^2}g(u)g(v)\Lambda_7(s,r,u,v)dudv
\end{align*}
for all $t>0,u,v\in {\mathbb R}$, and all $G\in C^\infty({\mathbb R})$ with compact support, where
\begin{align*}
\Lambda_7(s,&r,u,v)=-G(1)\int_{{\mathbb R}}G(x-u)\frac{\partial}{\partial x}\varphi_{s,r}(x,v)dx\\
&\hspace{2cm}-G(1)\int_{{\mathbb R}}G(y-v)\frac{\partial}{\partial y}\varphi_{s,r}(u,y)dy-\varphi_{s,r}(u,v)G(1)G(1).
\end{align*}
Decompose $F$ as
$$
F(x)=F_{+}(x)+F_{-}(x),
$$
where $F_{+}$ and $F_{-}$ are given in Section~\ref{sec3}. Clearly, we have
\begin{align*}
\int_{{\mathbb R}^2}&|g(u)g(v)|dudv\int_{{\mathbb R}^2}|F'_{+}(x-u)F'_{+}(y-v)|\varphi_{s,r}(x,y)dxdy\\
&\leq \int_{{\mathbb R}^2}|g(u)g(v)|dudv\left(\int_u^\infty\log^2(x-u)\varphi_s(x)dx
\int_v^\infty\log^2(y-v)\varphi_r(y)dy\right)^{1/2}\\
&\leq \int_{{\mathbb R}^2}|g(u)g(v)|dudv\left(s^{-H}+s^H+|u|\right)^{1/2}
\left(r^{-H}+r^H+|v|\right)^{1/2}\\
&\leq C_H\left(r^{-H}+s^H+1\right)\left(\int_{\mathbb R}|g(u)|(\sqrt{|u|}+1)du\right)^2,
\end{align*}
and
\begin{align*}
\int_{{\mathbb R}^2}|g(u)g(v)|dudv&\int_u^{\infty}dx\int_v^\infty
\frac{|\Psi_{s,r,u,v}(x,y)|dy}{(x-u)(y-v)}\\
&\leq C_{H,T,\beta}\frac{ s^{\beta H/2}}{r^{(1+\beta)H}(s-r)^{(1+\beta)H}} \int_{{\mathbb R}^2}|g(u)g(v)|dudv
\end{align*}
by Lemma~\ref{lem3.1}. Thus, similar to the proof of Proposition~\ref{lem3.2} by approximating the function $F_{+}'(x)$ by smooth functions with compact support, we can show that the integral $\int_0^t(F_{+}'\ast g)(B^H_s)\delta B^H_s$ exists in $L^2(\Omega)$ for all $t\geq 0$ and
\begin{align*}
E&\left|\int_0^t(F_{+}'\ast g)(B^H_s)\delta B^H_s\right|^2=\int_0^t\int_0^tdsdr\phi(s,r)\int_{{\mathbb R}^2}dudvg(u)g(v)\\
&\hspace{5cm}\cdot\int_{{\mathbb R}^2}F'_{+}(x-u)F'_{+}(y-v)\varphi_{s,r}(x,y)dxdy\\
&\hspace{4cm}\;+\int_0^tds\int_0^sd\xi\int_0^tdr\int_0^rd\eta
\phi(s,\eta)\phi(r,\xi)\\
&\hspace{5cm}\cdot\int_{{\mathbb R}^2}g(u)g(v)dudv\int_u^{\infty}dx\int_v^\infty
\frac{\Psi_{s,r,u,v}(x,y)dy}{(x-u)(y-v)}.
\end{align*}
Similarly, we can also show that the integral $\int_0^t(F_{-}'\ast g)(B^H_s)\delta B^H_s$ exists in $L^2(\Omega)$ for all $t\geq 0$, and the lemma follows since $F=F_{+}+F_{-}$.
\end{proof}

\begin{lemma}\label{lem5.1}
Let $F(x)=x\log|x|-x$ and let $g$ be a continuous function with compact support. Then
\begin{equation}\label{sec5-eq5.25}
\int_0^t\left(\int_{\mathbb R}F'(B^H_s-x)g(x)dx\right)\delta B^H_s
=\int_{\mathbb R}\left(\int_0^tF'(B^H_s-x)\delta B^H_s\right)g(x)dx
\end{equation}
for all $0\leq t\leq T$.
\end{lemma}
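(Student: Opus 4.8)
The plan is to prove \eqref{sec5-eq5.25} first for smooth, compactly supported approximants of $F'$ and then to pass to the limit with the help of the $L^2$-estimates of Section~\ref{sec3}. Since $F'=F'_{+}+F'_{-}$ and, by the proof of Lemma~\ref{lem5.3} and by Proposition~\ref{lem3.2}, each of $\int_0^t(F'_{\pm}\ast g)(B^H_s)\delta B^H_s$ and $\int_0^tF'_{\pm}(B^H_s-x)\delta B^H_s$ exists in $L^2(\Omega)$, it suffices by linearity to prove \eqref{sec5-eq5.25} with $F'$ replaced by $F'_{+}$; the argument for $F'_{-}$ is the same. I would take $G_n$, $n\geq2$, to be the mollifications of $F'_{+}$ from \eqref{eq3-2020202}, so that $G_n\in C^\infty_0({\mathbb R})$, $|G_n|\leq\psi_1$, $|G'_n|\leq\psi_2$ and $G_n\to F'_{+}$ pointwise off the origin (Lemmas~\ref{lem3.301}--\ref{lem3.301-1}).

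The first step is to establish, for each fixed $n$, the identity
\begin{equation}\label{sec5-eq5.25-n}
\int_0^t(G_n\ast g)(B^H_s)\delta B^H_s=\int_{\mathbb R}\left(\int_0^tG_n(B^H_s-x)\delta B^H_s\right)g(x)\,dx .
\end{equation}
Here $G_n\ast g\in C^\infty_0({\mathbb R})$, so the left member is a genuine $L^2(\Omega)$ Skorohod integral; and writing $Z^n_t(x):=\int_0^tG_n(B^H_s-x)\delta B^H_s$, the $L^2$-computation in the proof of Proposition~\ref{lem3.2} (using $|G_n|\leq\psi_1$, $|G'_n|\leq\psi_2$, \eqref{prop3.1-eq1}--\eqref{prop3.1-eq1001} and Lemmas~\ref{lem3.301-3}, \ref{lem3.301-4}, with constants locally uniform in the shift parameter) shows that $x\mapsto\|Z^n_t(x)\|_{L^2(\Omega)}$ is bounded on the compact support of $g$, so that the right member lies in $L^2(\Omega)$ by the Minkowski integral inequality. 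To prove \eqref{sec5-eq5.25-n} I would test both sides against an arbitrary $F\in{\mathcal S}$: by the duality \eqref{sec2-eq2.1} the left side equals $E\langle D^HF,(G_n\ast g)(B^H_\cdot){1}_{[0,t]}\rangle_{\mathcal H}$, while on the right side the classical Fubini theorem (legitimate since $E|FZ^n_t(x)|\leq\|F\|_{L^2}\|Z^n_t(x)\|_{L^2}$ is integrable against $g$) lets one move $E[F\,\cdot\,]$ inside $\int_{\mathbb R}(\cdot)g(x)dx$, apply \eqref{sec2-eq2.1} slice by slice, and then use Fubini once more together with the bilinearity of $\langle\cdot,\cdot\rangle_{\mathcal H}$ and $(G_n\ast g)(y)=\int_{\mathbb R}G_n(y-x)g(x)dx$ to recover the same quantity $E\langle D^HF,(G_n\ast g)(B^H_\cdot){1}_{[0,t]}\rangle_{\mathcal H}$. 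As ${\mathcal S}$ is dense in $L^2(\Omega)$ and both sides of \eqref{sec5-eq5.25-n} lie in $L^2(\Omega)$, this yields \eqref{sec5-eq5.25-n}.

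The second step is to let $n\to\infty$ in \eqref{sec5-eq5.25-n}. For the left-hand side, $\int_0^t(G_n\ast g)(B^H_s)\delta B^H_s\to\int_0^t(F'_{+}\ast g)(B^H_s)\delta B^H_s$ in $L^2(\Omega)$: this is exactly the convergence obtained in the proof of Lemma~\ref{lem5.3} (via Proposition~\ref{prop2.1} and dominated convergence, using $|G_n|\leq\psi_1$, $|G'_n|\leq\psi_2$ and Lemmas~\ref{lem3.1}, \ref{lem3.301-3}). For the right-hand side, set $Z_t(x):=\int_0^tF'_{+}(B^H_s-x)\delta B^H_s$. From the proof of Proposition~\ref{lem3.2} one has $\|Z^n_t(x)-Z_t(x)\|_{L^2(\Omega)}\to0$ for every fixed $x$, and the bounds $|G_n|\leq\psi_1$, $|G'_n|\leq\psi_2$ together with \eqref{prop3.1-eq1}--\eqref{prop3.1-eq1001} and Lemma~\ref{lem3.301-3} give $\sup_n\|Z^n_t(x)-Z_t(x)\|_{L^2(\Omega)}\leq h(x)$ with $h$ bounded on the support of $g$. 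Dominated convergence then gives $\int_{\mathbb R}\|Z^n_t(x)-Z_t(x)\|_{L^2(\Omega)}|g(x)|dx\to0$, and the Minkowski integral inequality yields $\bigl\|\int_{\mathbb R}(Z^n_t(x)-Z_t(x))g(x)dx\bigr\|_{L^2(\Omega)}\to0$. The same estimates show that $x\mapsto Z^n_t(x)$ is $L^2(\Omega)$-continuous, hence so is its limit $x\mapsto Z_t(x)$, which therefore admits a jointly measurable version; this is precisely what makes the right-hand side of \eqref{sec5-eq5.25} a well-defined random variable. Passing to the limit on both sides of \eqref{sec5-eq5.25-n} gives \eqref{sec5-eq5.25} for $F'_{+}$ and, by symmetry, for $F'_{-}$; summing the two proves the lemma.

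The main obstacle is the bookkeeping in the second step: one must be sure that the Skorohod integrals $\int_0^tG_n(B^H_s-x)\delta B^H_s$ are controlled in $L^2(\Omega)$ uniformly in $n$ and locally uniformly in the shift $x$, so that Fubini and dominated convergence may be applied under the spatial integral $\int_{\mathbb R}(\cdot)g(x)dx$. Concretely this means verifying that the constants in Lemmas~\ref{lem3.1}, \ref{lem3.301-3}, \ref{lem3.301-4} and in \eqref{prop3.1-eq1}--\eqref{prop3.1-eq1001} can be taken independent of $a$ (or at worst growing at most like $\log^2|a|$), which is the case since the Gaussian densities $\varphi_{s}$, $\varphi_{s,r}$ are uniformly bounded and the logarithmic weights are locally integrable; once this is in hand, the rest is the standard duality-and-approximation proof of a stochastic Fubini theorem.
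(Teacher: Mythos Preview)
Your proof is correct but takes a longer route than the paper. The paper's argument is essentially your Step~1 applied \emph{directly} to $F'$ rather than to the mollifications $G_n$: since Lemma~\ref{lem5.3} already guarantees that $u:=(F'\ast g)(B^H_\cdot)1_{[0,t]}\in{\rm Dom}(\delta^H)$ and Proposition~\ref{lem3.2} guarantees that $X_t:=\int_{\mathbb R}\bigl(\int_0^tF'(B^H_s-x)\delta B^H_s\bigr)g(x)\,dx$ lies in $L^2(\Omega)$, it suffices to verify the duality relation $E[UX_t]=E\langle D^HU,u\rangle_{\mathcal H}$ for all $U\in{\mathbb D}^{1,2}$. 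The paper does this by the same Fubini-then-slicewise-duality computation you use in Step~1, but with $F'$ in place of $G_n$; no subsequent limit is needed. Your approach reproves this identity for the smooth $G_n$ and then adds a second step passing $n\to\infty$ on both sides, which requires the additional dominated-convergence bookkeeping you outline. The paper's route is more economical---the approximation has already been absorbed into Proposition~\ref{lem3.2} and Lemma~\ref{lem5.3}, so there is no need to repeat it here---while yours has the modest advantage of making every duality step occur for smooth integrands, at the cost of the extra convergence arguments.
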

By using the divergence operator $\delta^{H}$ we can rewrite~\eqref{sec5-eq5.25} as
$$
\delta^H(F'\ast g(B^H))=\int_{\mathbb R}g(a)da\int_0^tF'(B^H-a)\delta B^H_s.
$$
\begin{proof}[Proof of Lemma~\ref{lem5.1}]
Clearly, we have
$$
F'\ast g=(F\ast g)',\qquad (F\ast g)''={\rm v.p.}\frac1x\ast g.
$$
Moreover, the functional
$$
x\mapsto \int_0^tF'(B^H_s-x)\delta B^H_s
$$
is Borel measurable for every $t\geq 0$ and the right-hand side in~\eqref{sec5-eq5.25} exists also in $L^2(\Omega)$ by Proposition~\ref{lem3.2}.

Denote by $X$ the process concerning the right hand in~\eqref{sec5-eq5.25} and let
$$
u_t=\int_{\mathbb R}F'(B^H_t-x)g(x)dx
$$
for $t\geq 0$. Then, the process $X$ and $u$ are measurable. Thus, it is enough to show that the following duality relationship holds:
\begin{equation}\label{sec5-eq5.25-1}
E\left[UX_T\right]=E\left[\langle D^HU,u\rangle_{\mathscr H}\right]
\end{equation}
for all $U\in {\mathbb D}^{1,2}$ by Lemma~\ref{lem5.3}. This is clear. In fact, noting that
\begin{align*}
\int_{\mathbb R}\Bigl(\int_0^T\int_0^T(D^H_sU)&F'(B^H_r-x)\phi(s,r)dsdr\Bigr) g(x)dx\\
&=\int_0^T\int_0^T(D^H_sU)\Bigl(\int_{\mathbb R}F'(B^H_r-x)g(x)dx\Bigr)\phi(s,r)dsdr,\qquad {\rm a.s.}
\end{align*}
for all $U\in {\mathbb D}^{1,2}$, we have
\begin{align*}
E[UX_T]&=E\left[U\int_{\mathbb R}\left(\int_0^TF'(B^H_s-x)\delta B^H_s\right)g(x)dx\right]\\
&=\int_{\mathbb R}E\left[U\int_0^TF'(B^H_s-x)\delta B^H_s\right]g(x)dx\\
&=\int_{\mathbb R}\left(E\langle D^HU,F'(B^H-x)\rangle_{\mathcal H}\right)g(x)dx\\
&=E\int_{\mathbb R}\Bigl(\int_0^T\int_0^T(D^H_sU)F'(B^H_r-x) \phi(s,r)dsdr\Bigr)g(x)dx\\
&=E\int_0^T\int_0^T(D^H_sU)\Bigl(\int_{\mathbb R}F'(B^H_r-x)g(x)dx\Bigr)\phi(s,r)dsdr=E\left[\langle D^HU,u\rangle_{\mathcal H}\right]
\end{align*}
for all $U\in {\mathbb D}^{1,2}$, and the lemma follows.
\end{proof}
\begin{proof}[Proof of Theorem~\ref{th5.3}]
Let $F(x)=x\log|x|-x$. Then second derivative $(F\ast g)''=F''\ast g$ exists in the sense of Schwartz's distribution, and similar to Theorem~\ref{th4.1} we have
\begin{align*}
{\mathcal X}^{g}_t&=H \int_0^t{\rm v.p.}\frac1{x}\ast g(B^H_s)s^{2H-1}ds
\end{align*}
for all $t\geq 0$, where ${\mathcal C}^{g}$ is defined in Lemma~\ref{lem5.3}. It follows from Lemma~\ref{lem5.1} that
\begin{align*}
\frac12\int_{\mathbb R}{\mathcal C}^H_t(x)g(x)dx&=\int_{\mathbb R}\left(F(B^H_s-x)-F(-x)-\int_0^tF'(B^H_s-x)\delta B^H_s\right)g(x)dx\\
&=F\ast g(B^H_t)-F\ast g(0)-\int_0^tF'\ast g(B^H_s)\delta B^H_s\\
&=F\ast g(B^H_t)-F\ast g(0)-\int_0^t(F\ast g)'(B^H_s)\delta B^H_s\\
&=H\int_0^t{\rm v.p.}\frac1x\ast g (B^H_s)s^{2H-1}ds\\
&=H\pi\int_0^t{\mathscr H}g(B^H_s)s^{2H-1}ds
\end{align*}
for all $t\geq 0$. This completes the proof.
\end{proof}
\begin{corollary}
Let $\frac12<H<1$ and let $g,g_n\in L^2({\mathbb R})$ be continuous with compact supports. If $g_n\to g$ in $L^2({\mathbb R})$, as $n$ tends to infinity, we then have
\begin{equation}\label{sec5-eq5.27}
\lim_{n\to \infty}\int_{\mathbb R}{\mathcal
C}^H_t(x)g_n(x)dx=\int_{\mathbb R}{\mathcal C}^H_t(x)g(x)dx
\end{equation}
for all $t\geq 0$, in the $L^2(\Omega)$.
\end{corollary}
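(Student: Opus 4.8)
The plan is to reduce the statement to the occupation type formula of Theorem~\ref{th5.3} and then estimate the resulting ordinary integral by the Cauchy--Schwarz inequality in the time variable, using the $L^2$-isometry of the Hilbert transform recalled in Section~\ref{sec2}. Since $g$ and every $g_n$ are continuous with compact support, Theorem~\ref{th5.3} applies to each of them; subtracting the two identities and using the linearity of ${\mathscr H}$ and of the integral $\int_0^t(\cdot)\,s^{2H-1}ds$ gives, for all $t\ge 0$,
$$
\int_{\mathbb R}{\mathcal C}^H_t(x)g_n(x)dx-\int_{\mathbb R}{\mathcal C}^H_t(x)g(x)dx=2H\pi\int_0^t\bigl({\mathscr H}(g_n-g)\bigr)(B^H_s)s^{2H-1}ds .
$$
Thus it is enough to show that the right-hand side tends to $0$ in $L^2(\Omega)$ as $n\to\infty$. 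Note that $({\mathscr H}(g_n-g))(B^H_s)$ is a well-defined element of $L^2(\Omega)$ for each $s>0$, because ${\mathscr H}(g_n-g)\in L^2({\mathbb R})$ and $B^H_s$ has a bounded density.

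Write $h_n:=g_n-g$, so that $h_n\to 0$ in $L^2({\mathbb R})$. First I would apply the Cauchy--Schwarz inequality on $[0,t]$ with respect to the measure $s^{2H-1}\,ds$ and then take expectations, obtaining
$$
E\left|\int_0^t({\mathscr H}h_n)(B^H_s)s^{2H-1}ds\right|^2\le\frac{t^{2H}}{2H}\int_0^tE\bigl[|({\mathscr H}h_n)(B^H_s)|^2\bigr]s^{2H-1}ds .
$$
Next, since $B^H_s$ has the Gaussian density $\varphi_s$ with $\varphi_s(x)\le(\sqrt{2\pi}\,s^{H})^{-1}$, and since ${\mathscr H}$ is an isometry of $L^2({\mathbb R})$, i.e. $\|{\mathscr H}h_n\|_{L^2({\mathbb R})}=\|h_n\|_{L^2({\mathbb R})}$, we get
$$
E\bigl[|({\mathscr H}h_n)(B^H_s)|^2\bigr]=\int_{\mathbb R}|({\mathscr H}h_n)(x)|^2\varphi_s(x)dx\le\frac{\|h_n\|_{L^2({\mathbb R})}^2}{\sqrt{2\pi}\,s^{H}} .
$$

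Combining the last two displays and using $\int_0^t s^{2H-1}s^{-H}ds=\int_0^t s^{H-1}ds=t^{H}/H<\infty$, we arrive at
$$
E\left|\int_{\mathbb R}{\mathcal C}^H_t(x)(g_n-g)(x)dx\right|^2\le C_{H,t}\,\|g_n-g\|_{L^2({\mathbb R})}^2 ,
$$
and letting $n\to\infty$ gives~\eqref{sec5-eq5.27}. There is essentially no obstacle here: the only point that deserves a line of attention is the behaviour of the time integrand near $s=0$, which after the Cauchy--Schwarz step is of order $s^{H-1}$ and hence integrable precisely because $H>0$ (in particular for $\frac12<H<1$); everything else is a routine combination of Theorem~\ref{th5.3} with the $L^2$-boundedness of the Hilbert transform.
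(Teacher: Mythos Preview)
Your proof is correct and follows essentially the same route as the paper: apply Theorem~\ref{th5.3} to $g_n-g$ and then use the $L^2$-isometry of the Hilbert transform. The paper's proof is a two-line sketch invoking exactly these two ingredients, whereas you supply the explicit Cauchy--Schwarz/density bound showing that $E|\int_0^t({\mathscr H}h_n)(B^H_s)s^{2H-1}ds|^2\le C_{H,t}\|h_n\|_{L^2}^2$; this is precisely the estimate the paper leaves implicit.
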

\begin{proof}
The convergence follows from the identity
$$
\int_{\mathbb R}\left(g_n(x)-g(x)\right)^2dx
=\int_{\mathbb R}\left({\mathscr H}g_n(x)-{\mathscr H}g(x)\right)^2dx.
$$
and Theorem~\ref{th5.3}.
\end{proof}



\section{The case $0<H<\frac12$}\label{sec7}

In the final section we consider the process ${\mathcal C}^H$ with $0<H<\frac12$. Recall that for $0<H<\frac12$, Yan {\em et al}~\cite{Yan7} obtained the {\it generalized quadratic covariation} of $f(B^H)$ and $B^H$ defined by
$$
[f(B^H),B^H]^{(H)}_t:=\lim_{\varepsilon\downarrow
0}\frac{1}{\varepsilon^{2H}}\int_0^t\left\{f(B^{H}_{
s+\varepsilon})-f(B^{H}_s)\right\}(B^{H}_{s+\varepsilon}-B^{H}_s)ds^{2H}
$$
in probability, where $f$ is a Borel function. In Yan {\em et al}~\cite{Yan7} one constructed the Banach space ${\mathscr H}=L^2({\mathbb R},\mu(dx))$ with
$$
\mu(dx)=\left(\int_0^Te^{-\frac{x^2}{2s^{2H}}}\frac{2Hds}{\sqrt{2\pi} s^{1-H}}\right)dx
$$
and
\begin{align*}
\|f\|_{\mathscr H}^2=\int_0^T\int_{\mathbb
R}|f(x)|^2e^{-\frac{x^2}{2s^{2H}}}\frac{2Hdxds}{\sqrt{2\pi}s^{1-H}} =E\left(\int_0^T|f(B^H_s)|^2ds^{2H}\right),
\end{align*}
such that the generalized quadratic covariation $[f(B^H),B^H]^{(H)}$ exists in $L^2(\Omega)$ and
\begin{equation}\label{sec6-eq6.00}
E\left|[f(B^H),B^H]_t^{(H)}\right|^2\leq C\|f\|^2_{\mathscr H},
\end{equation}
provided $f\in {\mathscr H}$. Moreover, the Bouleau-Yor identity takes the form
$$
[f(B^H),B^H]_t^{(H)}=-\int_{\mathbb {R}}f(x) {\mathscr L}^{H}(dx,t)
$$
for all $f\in {\mathscr H}$. By using the generalized quadratic covariation Yan {\em et al}~\cite{Yan7} obtained the next It\^o formula:
\begin{equation}\label{sec6-eq6.1}
F(B^H_t)=F(0)+\int_0^tf(B^H_s)\delta B^H_s+\frac12\left[f(B^H),B^H
\right]^{(H)}_t
\end{equation}
for all $0<H<\frac12$, where $F$ is an absolutely continuous function such that $F'=f\in {\mathscr H}$ is left (right) continuous. It is important to note that the method used in Yan {\em et al}~\cite{Yan7} is inefficacy for $\frac12<H<1$ in general and the similar results for $\frac12<H<1$ is unknown so far.
\begin{corollary}\label{cor6.1}
Let $0<H<\frac12$ and let $F(x)=x\log|x|-x$. Then $F'\in {\mathscr H}$ and the It\^o type formula
\begin{equation}\label{sec6-eq5.3}
F(B^H_t-a)=F(-a)+\int_0^t\log|B^H_s-a|\delta B^H_s +\frac12\left[\log(B^H-a),B^H\right]^{(H)}_t
\end{equation}
holds and
\begin{equation}\label{sec6-eq5.4}
{\mathcal C}^H_t(a)=\left[\log|B^H-a|,B^H\right]^{(H)}_t
\end{equation}
for all $t\geq 0$ and $a\in {\mathbb R}$.
\end{corollary}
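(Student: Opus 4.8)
The plan is to obtain the corollary directly from the It\^o type formula \eqref{sec6-eq6.1}, applied to the shifted function $x\mapsto F(x-a)$. Granting that $F'(\cdot-a)=\log|\cdot-a|$ lies in the space ${\mathscr H}=L^2({\mathbb R},\mu(dx))$, formula \eqref{sec6-eq6.1} yields \eqref{sec6-eq5.3} at once, and \eqref{sec6-eq5.4} then follows by subtracting \eqref{sec6-eq5.3} from the definition \eqref{sec4-eq4.2} of ${\mathcal C}^H_t(a)$. So the whole argument reduces to establishing the membership $\log|\cdot-a|\in{\mathscr H}$.

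First I would verify that membership. Writing $\varphi_s$ for the $N(0,s^{2H})$ density and using the expression for the ${\mathscr H}$-norm recalled above,
\[
\|\log|\cdot-a|\|_{\mathscr H}^2=\int_0^T\Bigl(\int_{\mathbb R}(\log|x-a|)^2\varphi_s(x)\,dx\Bigr)ds^{2H}
=\int_0^T 2Hs^{2H-1}\Bigl(\int_{\mathbb R}(\log|x-a|)^2\varphi_s(x)\,dx\Bigr)ds .
\]
I would split the inner integral over $\{|x-a|<1\}$ and $\{|x-a|\ge 1\}$. On the first region, bounding $\varphi_s$ by $\|\varphi_s\|_\infty=C_0 s^{-H}$ and using $\int_{|u|<1}(\log|u|)^2\,du<\infty$ leaves a constant times $\int_0^T s^{2H-1}s^{-H}\,ds=\int_0^T s^{H-1}\,ds<\infty$, finite because $H>0$. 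On the second region $(\log|x-a|)^2\le |x-a|^2\le 2x^2+2a^2$, so $\int_{|x-a|\ge1}(\log|x-a|)^2\varphi_s(x)\,dx\le 2s^{2H}+2a^2\le 2T^{2H}+2a^2$ for $s\le T$, and the remaining $s$-integral is again finite. Hence $\log|\cdot-a|\in{\mathscr H}$; the special case $a=0$ gives $F'=\log|\cdot|\in{\mathscr H}$.

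Next, since $F(x)=x\log|x|-x$ is continuous on ${\mathbb R}$ (with $F(0)=0$), absolutely continuous, and satisfies $F'(x)=\log|x|$ for $x\neq 0$, the function $x\mapsto F(x-a)$ meets the hypotheses of \eqref{sec6-eq6.1}, which gives \eqref{sec6-eq5.3}. Subtracting \eqref{sec6-eq5.3} from \eqref{sec4-eq4.2},
\[
\tfrac12{\mathcal C}^H_t(a)=F(B^H_t-a)-F(-a)-\int_0^t\log|B^H_s-a|\,\delta B^H_s
=\tfrac12\bigl[\log|B^H-a|,B^H\bigr]^{(H)}_t ,
\]
which is exactly \eqref{sec6-eq5.4}.

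The step I expect to be the main obstacle is the ${\mathscr H}$-bound near $s=0$: there the logarithmic singularity of $(\log|x-a|)^2$ meets the blow-up $\|\varphi_s\|_\infty\sim s^{-H}$, and the estimate closes precisely because $H>0$ makes $\int_0^T s^{H-1}\,ds$ finite, so the regime $0<H<\tfrac12$ genuinely enters here. A minor, purely formal, caveat is that $\log|x|$ is not one-sidedly continuous at $x=0$ in the naive sense; since $\{s:B^H_s=a\}$ carries no occupation mass this is immaterial, but if one prefers a fully rigorous treatment one may instead apply \eqref{sec6-eq6.1} to the smooth compactly supported mollifications $G_n$ of \eqref{eq3-2020202} (and their negative-part analogues) and pass to the limit exactly as in Yan {\em et al.}~\cite{Yan7}, using the continuity estimate \eqref{sec6-eq6.00} to control the quadratic-covariation term.
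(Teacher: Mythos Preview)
Your verification that $\log|\cdot-a|\in{\mathscr H}$ is correct and fills in a detail the paper leaves implicit. However, the direct application of \eqref{sec6-eq6.1} to $F'=\log|\cdot-a|$ is not quite licit: the It\^o formula from \cite{Yan7} requires the derivative to be left \emph{or} right continuous, and $x\mapsto\log|x-a|$ is \emph{neither} at $x=a$, since it diverges to $-\infty$ from both sides. Your remark that the level set $\{s:B^H_s=a\}$ has zero occupation does not by itself license applying the theorem outside its stated hypotheses; the one-sided continuity enters in \cite{Yan7} when one identifies the limit of the approximating quadratic covariations, so bypassing it means redoing part of that argument. The mollification route you sketch would work, but you do not carry it out, and it is more laborious than needed.

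The paper handles this point more simply: it splits $F=F_{+}+F_{-}$ as in \eqref{sec3-eq3.4} and \eqref{sec3-eq3.1000}. Then $F'_{+}(x)=\log x\cdot 1_{\{x>0\}}$ is genuinely left continuous on all of ${\mathbb R}$ (at $x\le 0$ the function is identically $0$, matching the left limit everywhere), and symmetrically $F'_{-}$ is right continuous. Your ${\mathscr H}$-norm estimate applies unchanged to each half, so \eqref{sec6-eq6.1} can be invoked separately for $F_{+}(\cdot-a)$ and $F_{-}(\cdot-a)$, and summing the two identities gives \eqref{sec6-eq5.3}. This is the decomposition you should use in place of the direct application; with it your argument becomes complete and coincides with the paper's.
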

\begin{proof}
Let $F_{+}$ and $F_{-}$ be defined in Section~\ref{sec3}. Then $F'_{+}\in {\mathscr H}$ is left continuous, and
\begin{equation}\label{sec6-eq5.3-00}
F_{+}(B^H_t-a)=F_{+}(-a)+\int_0^tF'_{+}(B^H_s-a)\delta B^H_s +\frac12\left[F'_{+}(B^H-a),B^H\right]^{(H)}_t
\end{equation}
by It\^o's formula~\eqref{sec6-eq6.1}. Similarly, we have
\begin{equation}\label{sec6-eq5.3-01}
F_{-}(B^H_t-a)=F_{-}(-a)+\int_0^tF'_{-}(B^H_s-a)\delta B^H_s +\frac12\left[F'_{-}(B^H-a),B^H\right]^{(H)}_t
\end{equation}
since $F'_{-}\in {\mathscr H}$ is right continuous. Thus, the corollary follows from $F=F'_{+}+F'_{-}$.
\end{proof}
Denote
\begin{align*}
\Theta_\varepsilon(t,a):&={\mathscr L}^H(a-\varepsilon,t)F'(-\varepsilon)
-{\mathscr L}^H(a+\varepsilon,t)F'(\varepsilon)\\
&=\left[{\mathscr L}^H(a-\varepsilon,t)-{\mathscr L}^H(a+\varepsilon,t)\right]\log\varepsilon.
\end{align*}
By integration by parts we have
\begin{align*}
{\mathcal C}^H_t(a)&=\left[\log|B^H-a|,B^H\right]^{(H)}_t=-\int_{\mathbb {R}}\log|x-a|{\mathscr L}^{H}(dx,t)\\
&=-\lim_{\varepsilon\downarrow 0}\left(\int_{a+\varepsilon}^\infty\log|x-a|{\mathscr L}^{H}(dx,t)
+\int_{-\infty}^{a-\varepsilon}\log|x-a|{\mathscr L}^{H}(dx,t)\right)\\
&=\lim_{\varepsilon\downarrow 0}\left(\int_{a+\varepsilon}^\infty\frac{{\mathscr L}^{H}(x,t)}{x-a}dx
+\int_{-\infty}^{a-\varepsilon}\frac{{\mathscr L}^{H}(x,t)}{x-a}dx\right)+\lim_{\varepsilon\downarrow 0}\Theta_\varepsilon(t,a)\\
&=\lim_{\varepsilon\downarrow 0}\int_{\mathbb R}1_{\{|x-a|\geq \varepsilon\}}\frac{{\mathscr L}^{H}(x,t)}{x-a}dx\\
&={\rm v.p.}\int_{\mathbb R}\frac{{\mathscr L}^{H}(x,t)}{x-a}dx=\pi {\mathscr H}{\mathscr L}^H(\cdot,t)(x)
\end{align*}
almost surely and in $L^2(\Omega)$, for all $t\geq 0$ and $a\in {\mathbb R}$ since $x\mapsto {\mathscr L}^H(x,\cdot)$ is H\"older continuous and has the compact support.
\begin{corollary}\label{lem6.1}
Let $F$ be given by Corollary~\ref{cor6.1} and let $g$ be a continuous function with compact support. Then the integral $F'\ast g\in {\mathscr H}$, and for all $0<H<\frac12$, the process
$$
2\left((F\ast g)(B^H_t)-(F\ast g)(0)-\int_0^t(F'\ast g)(B^H_s)\delta B^H_s\right)
$$
is well-defined in $L^2(\Omega)$ and is equal to
$$
[(F'\ast g)(B^H),B^H]^{(H)}_t
$$
for all $t\in [0,T]$.
\end{corollary}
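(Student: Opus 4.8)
The plan is to deduce the statement from the It\^o formula~\eqref{sec6-eq6.1} of Yan {\em et al}~\cite{Yan7}, applied to the primitive $F\ast g$ in place of $F$. Write $F=F_{+}+F_{-}$ with $F_{\pm}$ as in Section~\ref{sec3}, so that $F'\ast g=(F'_{+}\ast g)+(F'_{-}\ast g)$; by linearity it suffices to treat each summand, and by Corollary~\ref{cor6.1} we already know $F'=\log|\cdot|\in{\mathscr H}$ with $F'_{\pm}\in{\mathscr H}$ left (right) continuous. The substantive point is to verify that the convolutions $F'\ast g$ and $F'_{\pm}\ast g$ again lie in ${\mathscr H}$ and are continuous (hence both left and right continuous), so that~\eqref{sec6-eq6.1} applies with $F\ast g$ (resp. $F_{\pm}\ast g$) as absolutely continuous function.

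First I would record the regularity and growth of $F'\ast g$. Since $\log|\cdot|\in L^1_{\mathrm{loc}}({\mathbb R})$ and $g$ is continuous with compact support $K$, writing $c_K:=\sup_{y\in K}|y|$, the function
\[
(F'\ast g)(x)=\int_K\log|x-y|\,g(y)\,dy
\]
is well defined and continuous on ${\mathbb R}$ by dominated convergence (the singularity of $\log$ at the origin is integrable and, locally in $x$, uniformly dominated), and a Fubini argument shows $F\ast g$ is absolutely continuous with $(F\ast g)'=F'\ast g$. For $|x|>c_K$ one has $|x-y|\in[\,|x|-c_K,\ |x|+c_K\,]$ for all $y\in K$, whence
\[
|(F'\ast g)(x)|\le C_{g}\bigl(1+\log(1+|x|)\bigr),
\]
and the same bound holds for $F'_{\pm}\ast g$. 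Since the measure $\mu$ defining ${\mathscr H}$ has density bounded by $C_{H,T}e^{-x^2/(2T^{2H})}$ — bound $e^{-x^2/(2s^{2H})}\le e^{-x^2/(2T^{2H})}$ for $s\le T$ and integrate in $s$ — it follows that
\[
\|F'\ast g\|_{\mathscr H}^2=\int_{\mathbb R}|(F'\ast g)(x)|^2\,\mu(dx)\le C_{H,T,g}\int_{\mathbb R}\bigl(1+\log(1+|x|)\bigr)^2e^{-x^2/(2T^{2H})}\,dx<\infty,
\]
so $F'\ast g\in{\mathscr H}$, and likewise $F'_{\pm}\ast g\in{\mathscr H}$.

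With this in hand the It\^o formula~\eqref{sec6-eq6.1} applies to $F_{+}\ast g$ (whose derivative $F'_{+}\ast g\in{\mathscr H}$ is continuous) and to $F_{-}\ast g$; adding the two identities gives
\[
(F\ast g)(B^H_t)=(F\ast g)(0)+\int_0^t(F'\ast g)(B^H_s)\delta B^H_s+\frac12\bigl[(F'\ast g)(B^H),B^H\bigr]^{(H)}_t,
\]
where the Skorohod integral exists in $L^2(\Omega)$. Rearranging and multiplying by $2$ yields the asserted identity. The $L^2(\Omega)$ membership is then automatic: $\bigl[(F'\ast g)(B^H),B^H\bigr]^{(H)}_t\in L^2(\Omega)$ by~\eqref{sec6-eq6.00} since $F'\ast g\in{\mathscr H}$, and $(F\ast g)(B^H_t)\in L^2(\Omega)$ because $F\ast g$ grows at most like $|x|\log|x|$ while $B^H_t$ is Gaussian; hence so does the Skorohod integral.

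The only genuinely delicate step is the membership $F'\ast g\in{\mathscr H}$ together with the regularity required by~\eqref{sec6-eq6.1}; once the logarithmic growth bound on the convolution and the Gaussian tail of $\mu$ are in place, everything else is a direct application of the $0<H<\frac12$ It\^o formula. For completeness one may also note, via the Bouleau--Yor identity as in the computation preceding the corollary, that $\bigl[(F'\ast g)(B^H),B^H\bigr]^{(H)}_t=-\int_{\mathbb R}(F'\ast g)(x)\,{\mathscr L}^H(dx,t)$, which links this process to the occupation-type formula of Section~\ref{sec5}.
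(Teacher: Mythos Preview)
Your proof is correct and follows the approach the paper leaves implicit: the corollary is stated without proof and is meant to be read as an immediate consequence of the It\^o formula~\eqref{sec6-eq6.1} once one checks $F'\ast g\in{\mathscr H}$, exactly as you do via the logarithmic growth bound and the Gaussian tail of $\mu$.

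One small simplification: the decomposition $F=F_{+}+F_{-}$ is unnecessary here. In Corollary~\ref{cor6.1} it was needed because $F'=\log|\cdot|$ is neither left nor right continuous at $0$, so one had to split into a left-continuous and a right-continuous piece. But convolution with the continuous compactly supported $g$ smooths this out: $F'\ast g$ is continuous on all of ${\mathbb R}$, hence in particular left continuous, and~\eqref{sec6-eq6.1} applies directly to $F\ast g$ with $f=F'\ast g$. The detour through $F_{\pm}\ast g$ is harmless but not needed.
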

Thus, similar to proof of Theorem~\ref{th5.3} we can obtain the following occupation formula.
\begin{theorem}\label{th6.2}
Let $0<H<\frac12$ and let $g$ be a continuous function with compact support. Then we have, almost surely,
\begin{equation}\label{sec6-eq5.26}
\int_{\mathbb R}{\mathcal C}^H_t(x)g(x)dx=2H\pi\int_0^t({\mathscr
H}g)(B^H_s)s^{2H-1}ds
\end{equation}
and
$$
2H\pi\int_0^tg(B^H_s)s^{2H-1}ds=\int_{\mathbb R}{\mathcal
C}^H_t(x)({\mathscr H}^{-1}g)(x)dx
$$
for all $t\in [0,T]$.
\end{theorem}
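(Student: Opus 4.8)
The plan is to carry over to the range $0<H<\frac12$ the three--step scheme of the proof of Theorem~\ref{th5.3}, with the generalized quadratic covariation and the It\^o formula~\eqref{sec6-eq6.1} of Yan {\em et al.}~\cite{Yan7} replacing the Skorohod It\^o formula and the estimates of Section~\ref{sec3} used there. Set $F(x)=x\log|x|-x$, so that $F'=\log|\cdot|$ and $F''={\rm v.p.}\frac1x$ in the sense of Schwartz distributions, whence $(F\ast g)'=F'\ast g$ and $(F\ast g)''=F''\ast g={\rm v.p.}\frac1x\ast g=\pi({\mathscr H}g)$. By Corollary~\ref{lem6.1}, $F'\ast g\in{\mathscr H}$, the random variable
$$
{\mathcal X}^g_t:=2\left((F\ast g)(B^H_t)-(F\ast g)(0)-\int_0^t(F'\ast g)(B^H_s)\delta B^H_s\right)
$$
is well defined in $L^2(\Omega)$ for every $t\in[0,T]$, and ${\mathcal X}^g_t=[(F'\ast g)(B^H),B^H]^{(H)}_t$.

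First I would identify this covariation with the occupation-type right-hand side. As $F\ast g$ is absolutely continuous with $(F\ast g)'=F'\ast g\in{\mathscr H}$, applying the It\^o formula~\eqref{sec6-eq6.1} to $F\ast g$ and then, exactly as in the computation preceding Corollary~\ref{lem6.1}, combining the Bouleau--Yor identity $[(F'\ast g)(B^H),B^H]^{(H)}_t=-\int_{\mathbb R}(F'\ast g)(x){\mathscr L}^H(dx,t)$ with an integration by parts whose boundary contributions $\Theta_\varepsilon$ vanish by the H\"older continuity and compact support of $x\mapsto{\mathscr L}^H(x,t)$, I would obtain
$$
[(F'\ast g)(B^H),B^H]^{(H)}_t=\int_{\mathbb R}(F''\ast g)(x){\mathscr L}^H(x,t)dx=\pi\int_{\mathbb R}({\mathscr H}g)(x){\mathscr L}^H(x,t)dx=2H\pi\int_0^t({\mathscr H}g)(B^H_s)s^{2H-1}ds,
$$
the last step being the weighted occupation formula $\int_{\mathbb R}\Phi(x){\mathscr L}^H(x,t)dx=2H\int_0^t\Phi(B^H_s)s^{2H-1}ds$, a consequence of~\eqref{sec2-1-eq1} and ${\mathscr L}^H(x,t)=2H\int_0^t\delta(B^H_s-x)s^{2H-1}ds$.

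Next I would establish the analogue of Lemma~\ref{lem5.1} valid for $0<H<\frac12$, namely the Fubini-type identity
$$
\int_0^t(F'\ast g)(B^H_s)\delta B^H_s=\int_0^t\left(\int_{\mathbb R}F'(B^H_s-x)g(x)dx\right)\delta B^H_s=\int_{\mathbb R}\left(\int_0^tF'(B^H_s-x)\delta B^H_s\right)g(x)dx.
$$
Just as for Lemma~\ref{lem5.1}, this reduces to the duality relationship~\eqref{sec2-eq2.1}: the right-hand side lies in $L^2(\Omega)$ since $x\mapsto\int_0^tF'(B^H_s-x)\delta B^H_s$ is Borel measurable and square integrable by Corollary~\ref{cor6.1}, and $\int_0^t(F'\ast g)(B^H_s)\delta B^H_s$ exists by Corollary~\ref{lem6.1}; one then interchanges $\int_{\mathbb R}g(x)\,dx$ with the Malliavin pairing $\langle D^HU,\cdot\rangle_{\mathcal H}$ for $U\in{\mathbb D}^{1,2}$ and applies~\eqref{sec2-eq2.1} once more. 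Granting this and recalling that ${\mathcal C}^H_t(x)=2(F(B^H_t-x)-F(-x)-\int_0^tF'(B^H_s-x)\delta B^H_s)$, while $\int_{\mathbb R}F(B^H_t-x)g(x)dx=(F\ast g)(B^H_t)$ and $\int_{\mathbb R}F(-x)g(x)dx=(F\ast g)(0)$, integrating against $g$ gives $\int_{\mathbb R}{\mathcal C}^H_t(x)g(x)dx={\mathcal X}^g_t=[(F'\ast g)(B^H),B^H]^{(H)}_t$, and the first identity follows from the previous paragraph. The second identity is then obtained by replacing $g$ with ${\mathscr H}^{-1}f$ --- again continuous with compact support, or else handled by the $L^2$-isometry and density argument of the corollary following Theorem~\ref{th5.3} --- and using ${\mathscr H}({\mathscr H}^{-1}f)=f$.

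The main obstacle is the Fubini interchange in this regime, since the proof of Lemma~\ref{lem5.1} tacitly relies on the variance formula~\eqref{sec2-eq2.100}, available only for $\frac12<H<1$; here one must instead work through the Banach space ${\mathscr H}=L^2({\mathbb R},\mu(dx))$ and the estimate~\eqref{sec6-eq6.00} of Yan {\em et al.}~\cite{Yan7}, verifying both the Borel measurability and the $L^2(\Omega)$-continuity of $x\mapsto\delta^H(F'(B^H-x)1_{[0,t]})$ that underlie the interchange. Alternatively, one may bypass the Skorohod-Fubini entirely and argue pathwise: by Corollary~\ref{cor6.1} and the Bouleau--Yor identity one has ${\mathcal C}^H_t(\cdot)=\pi\,{\mathscr H}{\mathscr L}^H(\cdot,t)$ a.s. (as already shown just before Corollary~\ref{lem6.1}), so that $\int_{\mathbb R}{\mathcal C}^H_t(x)g(x)dx=\pi\int_{\mathbb R}({\mathscr H}{\mathscr L}^H(\cdot,t))(x)g(x)dx$; one then transposes the Hilbert transform onto $g$ using its skew-symmetry on $L^2({\mathbb R})$ (both ${\mathscr L}^H(\cdot,t)$ and $g$ being square integrable) and closes with the weighted occupation formula, so that in this route the only genuine input is the pathwise identity preceding Corollary~\ref{lem6.1}, whose delicate point is exactly the vanishing of $\Theta_\varepsilon$ for the merely H\"older continuous, compactly supported ${\mathscr L}^H(\cdot,t)$.
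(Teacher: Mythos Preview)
Your proposal is correct and follows essentially the same scheme as the paper: establish the Skorohod--Fubini identity in the spirit of Lemma~\ref{lem5.1} (the paper also only says ``similar to Lemma~\ref{lem5.1}'' without redoing the duality computation for $H<\frac12$), use Corollaries~\ref{cor6.1} and~\ref{lem6.1} to identify $\int_{\mathbb R}{\mathcal C}^H_t(x)g(x)\,dx$ with $-\int_{\mathbb R}(F'\ast g)(x){\mathscr L}^H(dx,t)$, and then convert the latter to the occupation integral $2H\pi\int_0^t({\mathscr H}g)(B^H_s)s^{2H-1}ds$ via integration by parts and the weighted occupation formula.

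There is one minor organizational difference worth flagging. In your second step you integrate by parts directly on $-\int(F'\ast g)(x){\mathscr L}^H(dx,t)$ and invoke vanishing $\Theta_\varepsilon$ boundary terms; but $F'\ast g=\log|\cdot|\ast g$ is already continuous (the singularity is integrated out), so no $\varepsilon$-excision is needed there and the boundary terms vanish simply because ${\mathscr L}^H(\cdot,t)$ has compact support. The paper instead Fubinis back to $\int_{\mathbb R}g(a)\,da\int_{\mathbb R}F'(x-a){\mathscr L}^H(dx,t)$ and then redoes, inside the $da$-integral, the $\varepsilon$-splitting and $\Theta_\varepsilon$ computation that precedes Corollary~\ref{lem6.1}, swapping limits by dominated convergence. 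Your route is slightly cleaner; both arrive at the same place. Your alternative pathwise argument via ${\mathcal C}^H_t(\cdot)=\pi\,{\mathscr H}{\mathscr L}^H(\cdot,t)$ and the $L^2$ skew-symmetry of ${\mathscr H}$ is a genuine simplification that bypasses the Skorohod--Fubini step entirely; the paper does not take this shortcut. Finally, your caveat about ${\mathscr H}^{-1}g$ not being compactly supported is well taken---the paper, like you, leaves the second identity as a formal consequence of the first.
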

\begin{proof}
Let $F(x)=x\log|x|-x$. By Corollary~\ref{cor6.1} and~\eqref{sec6-eq6.00} we have
\begin{align*}
E&\left|\int_{\mathbb R}\left(\int_0^tF'(B^H_s-x)\delta B^H_s\right)g(x)dx\right|^2
<\infty,
\end{align*}
since $g$ admits a compact support. Thus, similar to Lemma~\ref{lem5.1} we can show that the Fubini theorem
\begin{equation}
\int_0^t\left(\int_{\mathbb R}F'(B^H_s-x)g(x)dx\right)\delta B^H_s
=\int_{\mathbb R}\left(\int_0^tF'(B^H_s-x)\delta B^H_s\right)g(x)dx
\end{equation}
holds for all $0\leq t\leq T$. It follows from Corollary~\ref{cor6.1} and Corollary~\ref{lem6.1} that
\begin{align*}
\frac12\int_{\mathbb R}{\mathcal C}^H_t(x)g(x)dx&=\int_{\mathbb R}\left(F(B^H_s-x)-F(-x)-\int_0^tF'(B^H_s-x)\delta B^H_s\right)g(x)dx\\
&=F\ast g(B^H_t)-F\ast g(0)-\int_0^tF'\ast g(B^H_s)\delta B^H_s\\
&=\frac12\left[(F'\ast g)(B^H),B^H\right]^{(H)}_t\\
&=-\frac12\int_{\mathbb R}(F'\ast g)(x){\mathscr L}^H(dx,t)
\end{align*}
On the other hand, by the H\"older continuity of $(x,t)\mapsto {\mathscr L}^H(x,t)$ and Lebesgue's dominated convergence theorem we have
\begin{align*}
\int_{\mathbb R}g(a)da&\int_{\mathbb R}F'(x-a){\mathscr L}^H(dx,t)\\
&=\int_{\mathbb R}g(a)\lim_{\varepsilon\downarrow 0 }\left(\int_{a+\varepsilon}^{\infty}F'(x-a){\mathscr L}^H(dx,t)+\int_{-\infty}^{a-\varepsilon}F'(x-a){\mathscr L}^H(dx,t)\right)da\\
&=\int_{\mathbb R}g(a)\lim_{\varepsilon\downarrow 0 }\left(\Theta_\varepsilon(t,a)-\int_{\mathbb R}1_{\{|x-a|>\varepsilon\}} F''(x-a){\mathscr L}^H(x,t)dx\right)da\\
&=-2H\int_{\mathbb R}g(a)\lim_{\varepsilon\downarrow 0 }\left(\int_0^t1_{\{|B^H_s-a|>\varepsilon\}} F''(B^H_s-a)s^{2H-1}ds\right)da\\
&=-2H\lim_{\varepsilon\downarrow 0 }\int_0^ts^{2H-1}ds\int_{\mathbb R}1_{\{|B^H_s-a|>\varepsilon\}}\frac{g(a)}{B^H_s-a}da\\
&=-2H\pi\int_0^t{\mathscr H}g(B^H_s)s^{2H-1}ds
\end{align*}
almost surely and in $L^2(\Omega)$, for all $t\in [0,T]$. This shows that
\begin{align*}
\frac12\int_{\mathbb R}&{\mathcal C}^H_t(x)g(x)dx=H\pi\int_0^t{\mathscr H}g(B^H_s)s^{2H-1}ds
\end{align*}
and the theorem follows.
\end{proof}

\begin{remark}
{\rm

When $0<H<\frac12$, from the discussion in this section, we have fund that for all non-locally integrable Borel functions $f\in {\mathscr H}$, the identities
$$
{\mathcal K}_t^{H}(f,a):=\lim_{\varepsilon\downarrow 0}\int_0^t1_{\{|B^H_s-a|>\varepsilon\}}f(B^H_s)ds^{2H}=[f(B^H),B^H]^{(H)}_t
$$
in $L^2(\Omega)$ (almost surely) and
$$
\int_{\mathbb R}{\mathcal K}_t^{H}(f,x)g(x)dx=2H\int_0^t{\rm v.p}(f'\ast g)(B^H_s)s^{2H-1}ds
$$
hold for all continuous functions $g$ with compact supports, provided
\begin{align*}
{\mathscr L}^H(a-\varepsilon,t)f(-\varepsilon)
-{\mathscr L}^H(a+\varepsilon,t)f(\varepsilon)\longrightarrow 0,
\end{align*}
in $L^2(\Omega)$ (almost surely), as $\varepsilon$ tends to zero.
}
\end{remark}

\end{document}